\providecommand\@dotsep{5}
\def\listtodoname{List of Todos}
\def\listoftodos{\@starttoc{tdo}\listtodoname}
\numberwithin{equation}{section}
\newtheorem{theorem}{Theorem}[section]
\newtheorem{proposition}[theorem]{Proposition}
\newtheorem{lemma}[theorem]{Lemma}
\newtheorem{remark}{Remark}
\newtheorem{definition}{Definition}[section]
\newtheorem*{theorem*}{Theorem}
\begin{document}

	\title[ASYMPTOTIC ANALYSIS FOR FRACTIONAL LAPLACIAN ON LONG CYLINDERS]
	{ASYMPTOTIC ISSUES FOR FRACTIONAL LAPLACIAN ON LONG
		CYLINDERS}
	\author{Tahir Boudjeriou and  Prosenjit Roy}
	\address[Tahir Boudjeriou]
	{\newline\indent
		Department of Basic Teaching
		\newline\indent Institute of Electrical \& Electronic Engineering
		\newline\indent University of boumerdes,  boumerdes, 35000, Algeria
		\newline\indent
		e-mail: {\tt t.boudjeriou@univ-boumerdes.dz}}
	\address[Prosenjit Roy]
	{\newline\indent
		Department of Mathematics and Statistics
		\newline\indent 	Indian Institute of Technology, Kanpur
		\newline\indent	UP, India, 2028016
		\newline\indent	e-mail: {\tt  prosenjit@iitk.ac.in}}

	\pretolerance10000
	
	
	\begin{abstract}
		\noindent
		In this paper, we are concerned with the asymptotic behavior of weak solutions to certain elliptic and parabolic problems involving the fractional $p$-Laplacian in cylindrical domains that become unbounded in one direction. The nonlocal nature of the operator describing the equations creates several technical difficulties in treating problems of this type.
		The main results, obtained within a nonlocal abstract framework, extend and complement related properties established in the local setting.\\
		{\sc Key words}: fractional $p$-Laplacian, stationary fractional problem, parabolic fractional problem, asymptotic behavior of solutions, expanding cylindrical domains.\\
		{\sc 2020 Mathematics Subject Classification}: 35B40, 35K55, 35K59, 35R11, 45K05, 47G20.
	\end{abstract}
	\thanks{}

	\maketitle	
	\section{Introduction and the main results}
	Recently, the study of the fractional Laplacian and related problems has received increasing attention. The interest in such problems is driven by their applications in continuum mechanics, minimal surfaces, finance, optimization, and game theory; see, for example, \cite{APP, Caf} and the references therein.
	
	In this paper, we are inspired by some established results concerning the asymptotic behavior of weak solutions as $\ell \rightarrow +\infty$ to the following problem
	\begin{equation}\label{eq22}\left\{
		\begin{array}{llc}
			(-\Delta)^{s}u_{\ell}=f & \text{in}\ & D_{\ell}, \\
			u_{\ell} =0 & \text{in} & \mathbb{R}^{N}\backslash D_{\ell},
		\end{array}\right.\tag{P}
	\end{equation}
	where $s\in (0,1)$, $D_{\ell} =(-\ell,\ell)^{m}\times \omega$, $1\leq m<N$, $\ell>0$, and $\omega$ is an open set in $\mathbb{R}^{N-m}$.
	
	To the best of our knowledge, the pioneering work on the asymptotic behavior of weak solutions to \eqref{eq22} as $\ell \rightarrow +\infty$ was carried out by Yeressian \cite{YA1}, who established the following result.
	\begin{theorem*}
		Let $u_{\ell}$ be the unique weak solution of (\ref{eq22}) for $s=\frac{1}{2}$, and assume the following conditions are satisfied :
		\begin{equation}
			\text{support}\,(f)\subset D_{\ell} \backslash D_{\ell-1}\quad \text{and}\quad \|f\|_{L^{2}(D_{\ell})}\leq 1.
		\end{equation}
		Then the following estimate holds
		$$\int_{D_{1}}u_{\ell}^{2}(x)\,dx\leq \frac{C}{\ell^{2}}\; \;\;\;\text{for all}\;\; \ell>0,$$
		where $C>0$ is a constant independent of  $\ell$.
	\end{theorem*}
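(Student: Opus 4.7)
The plan is to reduce the nonlocal equation to a local one via the Caffarelli--Silvestre extension (for $s = 1/2$ this is simply the harmonic extension to the half-space $\mathbb{R}^N \times \mathbb{R}_+$), and then to run a Saint-Venant type energy argument on an expanding family of sub-cylinders to obtain polynomial decay for a slab energy; a trace inequality then transfers the bound to $u_\ell|_{D_1}$.

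First, I would introduce $U_\ell \in H^{1}(\mathbb{R}^N \times \mathbb{R}_+)$, the harmonic extension of $u_\ell$, satisfying $\Delta U_\ell = 0$ in $\mathbb{R}^N \times (0,\infty)$, $U_\ell(\cdot,0) = u_\ell$, the Neumann identity $-\partial_y U_\ell(\cdot,0^+) = f$ on $D_\ell$, and $U_\ell(\cdot,0) = 0$ on $\mathbb{R}^N \setminus D_\ell$. For each $t \in (1,\ell-1)$, set $Q_t := (-t,t)^{m} \times \mathbb{R}^{N-m} \times (0,\infty)$. Since $\mathrm{supp}(f) \subset D_\ell \setminus D_{\ell-1}$, the Neumann datum vanishes on $(-t,t)^{m} \times \omega \times \{0\}$, while $U_\ell$ vanishes on $(-t,t)^{m} \times (\mathbb{R}^{N-m}\setminus\omega) \times \{0\}$, so on $Q_t$ the function $U_\ell$ solves a fully homogeneous mixed boundary value problem.

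Second, introduce the slab energy $\mathcal{E}(t) := \int_{Q_t} |\nabla U_\ell|^{2}\, dx\, dy$. Integration by parts on $Q_t$, combined with the vanishing boundary fluxes on $\{y=0\}$, produces
\[
\mathcal{E}(t) \;=\; \int_{\Sigma_t} U_\ell\, \partial_\nu U_\ell\, d\sigma, \qquad \Sigma_t := \partial\bigl((-t,t)^{m}\bigr) \times \mathbb{R}^{N-m} \times (0,\infty),
\]
while $\mathcal{E}'(t) = \int_{\Sigma_t} |\nabla U_\ell|^{2}\, d\sigma$ by the coarea formula. Cauchy--Schwarz together with a trace/Hardy-type inequality bounding $\|U_\ell\|_{L^{2}(\Sigma_t)}$ in terms of $\mathcal{E}(t)$ and $\mathcal{E}'(t)$ yields a Saint-Venant differential inequality for $\mathcal{E}(t)$. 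The starting point for the integration is the a priori bound $\mathcal{E}(\ell) \leq [u_\ell]_{H^{1/2}}^{2} \leq C\|f\|_{L^{2}}^{2} \leq C$, obtained by testing the equation against $u_\ell$ itself; integrating the differential inequality backwards from $t = \ell-1$ down to $t = 1$ then produces the key estimate $\mathcal{E}(1) \leq C/\ell^{2}$.

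Finally, the trace inequality applied on the bounded piece $(-1,1)^{m} \times \omega \times (0,1)$, together with a Poincar\'e-type estimate exploiting the Dirichlet condition off $\omega$, gives $\int_{D_1} u_\ell^{2}\,dx \lesssim \mathcal{E}(1) \leq C/\ell^{2}$. The main obstacle is the second step: identifying the correct Saint-Venant inequality yielding precisely the exponent $-2$. Unlike the classical elasticity setting, the transverse cross-section $\mathbb{R}^{N-m}\times\mathbb{R}_+$ of the extended domain is unbounded, so no Poincar\'e inequality is available across it and the familiar exponential decay of the local Saint-Venant principle breaks down. This loss of confinement is the extension-picture manifestation of the heavy tail of the fractional Green's function, and it is exactly what produces the sharp polynomial rate $1/\ell^{2}$ rather than a faster one.
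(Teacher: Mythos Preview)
This theorem is not proved in the paper: it is quoted as the main result of Yeressian \cite{YA1} to motivate the authors' own work, and no argument for it appears anywhere in the text. So there is no ``paper's own proof'' to compare against. For what it is worth, your extension-based strategy is precisely the one Yeressian uses in the original reference, whereas the present paper, in proving its own Theorems~\ref{THE} and~\ref{THP}, does \emph{not} pass through the Caffarelli--Silvestre extension at all: it works directly with the nonlocal energy, testing the weak formulation against $(u_\ell-u_\infty)\rho_\ell^p$ for a one-dimensional cutoff $\rho_\ell$, and controls the resulting commutator terms via the fractional Poincar\'e inequality of Lemma~\ref{LLe} and the pointwise bounds \eqref{NC} on the auxiliary function $h_\ell$. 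That machinery could in principle be specialized to $p=2$, $s=1/2$ and adapted to the compactly supported, boundary-layer $f$ of the quoted theorem, giving an alternative purely nonlocal proof, but the paper does not carry this out.

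On the substance of your plan: steps 1 and 3 are fine, and so is the a~priori bound $\mathcal E(\ell)\le C$. The genuine gap is exactly where you say it is. The inequality $\mathcal E(t)=\int_{\Sigma_t}U_\ell\,\partial_\nu U_\ell\le \|U_\ell\|_{L^2(\Sigma_t)}\sqrt{\mathcal E'(t)}$ together with the coarea identity for $\mathcal E'$ is not by itself enough to force the rate $\ell^{-2}$: if you also introduce $\mathcal F(t)=\|U_\ell\|_{L^2(\Sigma_t)}^2$ you get the pair $\mathcal F'(t)=2\mathcal E(t)$ and $\mathcal E(t)^2\le \mathcal F(t)\,\mathcal E'(t)$, and this system is scale-invariant---it is consistent with $\mathcal E(t)\sim t^\alpha$ for any $\alpha$, so no rate comes out without a further input. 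What is missing is the place where the bounded cross-section $\omega$ enters. One way to close the loop is to use the trace inequality on each slice $\{x_1=\pm t\}$ together with the fractional Poincar\'e inequality on $\omega$ to obtain $\int_\omega u_\ell(\pm t,X_2)^2\,dX_2\lesssim \mathcal E'(t)$; this links the boundary $L^2$-mass to $\mathcal E'$ with a constant depending only on $\omega$, and it is this $\omega$-dependent constant that ultimately fixes the exponent. Until you make that link explicit and show how it feeds into an integrable differential inequality for $\mathcal E$, the plan remains a plausible outline rather than a proof.
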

	Problems of this type arise in scale analysis, which provides valuable insight into the behaviour of complex fluid systems in the regime where some of the characteristic dimensionless parameters become small or infinitely large.
	
	Later on, Chowdhury and Roy \cite{CH1} extended Yeressian's result to the case where $s \in (0,1)$. Furthermore, when $m = 1$ and the force term $f$ is assumed to be defined only on $\omega \subset \mathbb{R}^{N-1}$, that is, $f = f(x_{2}, x_{3}, \ldots, x_{N})$, the authors in \cite{CH1} also proved the following result.
	\begin{theorem*}
		Suppose that $s\in \left(\frac{1}{2}, 1\right)$ and $f(x_{2}, x_{3}, \ldots, x_{N})\in L^{2}(\omega)$. Let $u_{\ell}$ be the unique weak solution of (\ref{eq22}) for each $\ell$, and let $u_{\infty}$ be the unique weak solution to the following equation on the cross-section $\omega$ of the cylinder $D_{\ell}$
		
		\begin{equation}\label{eq2}\left\{
			\begin{array}{llc}
				(-\Delta')^{s}u_{\infty}=f (x_{2}, x_{3}, \ldots, x_{N}) & \text{in}\ & \omega, \\
				u_{\infty} =0 & \text{in} & \mathbb{R}^{N}\backslash \omega,
			\end{array}\right.
		\end{equation}
		where $(-\Delta')^{s}$ denotes the $N-1$ dimensional fractional Laplace operator. Then, for each $\alpha \in (0, 1)$, the following estimate holds
		$$ \int_{D_{\alpha \ell}}|u_{\ell}-u_{\infty}|^{2}\,dx\leq \frac{1}{\ell^{2s-1}}\;\; \;\;\text{for all }\;\; \ell >0.$$
	\end{theorem*}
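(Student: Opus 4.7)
The plan is to let $w := u_\ell - \tilde u_\infty$, where $\tilde u_\infty(x_1, x') := u_\infty(x')$ denotes the trivial extension (constant in $x_1$) of $u_\infty$---itself extended by zero from $\omega$ to $\mathbb{R}^{N-1}$---to a function on $\mathbb{R}^N$, and then to derive a Caccioppoli-type estimate for $w$ on the long cylinder by testing the difference equation against a cutoff of $w$ itself.

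The first step is the dimensional reduction identity. A direct Fubini computation
\[
\int_{\mathbb{R}} \frac{dy_1}{(|x_1-y_1|^2 + r^2)^{(N+2s)/2}} = \frac{c}{r^{(N-1)+2s}},
\]
combined with the explicit form of the normalizing constant $C_{N,s}$, shows $(-\Delta)^{s} \tilde u_\infty(x) = (-\Delta')^{s} u_\infty(x') = f(x')$ on $\mathbb{R}\times\omega$. At the level of weak formulations, $[\tilde u_\infty, \phi]_s = \int f\phi\,dx$ for every $\phi \in W^{s,2}_0(D_\ell)$, where $[v,\varphi]_s := \iint (v(x)-v(y))(\varphi(x)-\varphi(y))/|x-y|^{N+2s}\,dx\,dy$ denotes the Gagliardo bilinear form on $\mathbb{R}^N$. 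Subtracting from the $u_\ell$-equation yields $[w,\phi]_s = 0$ for every admissible $\phi$.

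Next I would choose $\eta=\eta(x_1)\in C^\infty_c((-\ell,\ell))$ with $\eta\equiv 1$ on $[-\alpha\ell,\alpha\ell]$ and $|\eta'|\leq C/((1-\alpha)\ell)$, and plug in $\phi = \eta^2 w$ (admissible since $\eta$ annihilates $w$ outside $D_\ell$, where $u_\ell=0$ but $\tilde u_\infty\neq 0$). An elementary expansion of the integrand,
\[
(w(x)-w(y))\bigl(\eta^2(x_1) w(x) - \eta^2(y_1) w(y)\bigr) = \bigl(\eta(x_1)w(x)-\eta(y_1)w(y)\bigr)^2 - (\eta(x_1)-\eta(y_1))^2\, w(x)w(y),
\]
rearranges $[w,\eta^2 w]_s = 0$ into
\[
[\eta w]^{2}_{W^{s,2}(\mathbb{R}^N)} = \iint_{\mathbb{R}^N\times\mathbb{R}^N} \frac{(\eta(x_1)-\eta(y_1))^2\, w(x)w(y)}{|x-y|^{N+2s}}\, dx\, dy.
\]

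The main obstacle is bounding this right-hand side by $C\ell^{1-2s}$. Using $|w(x)w(y)|\leq (w(x)^2+w(y)^2)/2$, symmetry, and integrating out the $(N-1)$-dimensional variable in the Riesz kernel reduces it to $\int_{\mathbb{R}^N} w(x)^2 J(x_1)\,dx$, where $J(x_1) = c\int_{\mathbb{R}} (\eta(x_1)-\eta(y_1))^2/|x_1-y_1|^{1+2s}\,dy_1$ is the one-dimensional $H^{s}$-energy density of $\eta$. I would then split the integral at $|x_1|=\ell$. On $\{|x_1|\leq\ell\}$, splitting the $y_1$-integral at scale $\ell$ and using the Lipschitz bound on $\eta$ (with $s<1$ giving integrability near the diagonal) yields $J(x_1)\leq C\ell^{-2s}$ uniformly; combined with the a priori bound $\|w\|_{L^2(D_\ell)}^2\leq C\ell$ (which follows from testing the $u_\ell$-equation against $u_\ell$ together with fractional Poincaré on the cylinder whose constant depends only on $\omega$, plus the trivial $\|\tilde u_\infty\|_{L^2(D_\ell)}^2 = 2\ell\|u_\infty\|_{L^2(\omega)}^2$), this contributes $C\ell^{1-2s}$. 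On $\{|x_1|>\ell\}$ we have $u_\ell=0$, so $w(x)^2 = u_\infty(x')^2$ and the contribution factors as $\|u_\infty\|_{L^2(\omega)}^2\int_{|x_1|>\ell} J(x_1)\,dx_1$; a rescaling $x_1=\ell u$ shows $\int_{\mathbb{R}} J(x_1)\,dx_1 = [\eta]^{2}_{H^s(\mathbb{R})} \leq C\ell^{1-2s}$, again of the required order. Finally, the fractional Poincaré inequality on the infinite cylinder $\mathbb{R}\times\omega$ (proved by Plancherel in $x_1$ and cross-sectional Poincaré, with constant depending only on $\omega$) gives $\|\eta w\|_{L^2}^2\leq C[\eta w]^{2}_{W^{s,2}(\mathbb{R}^N)}\leq C\ell^{1-2s}$, and since $\eta\equiv 1$ on $D_{\alpha\ell}$ we conclude $\int_{D_{\alpha\ell}}|u_\ell-u_\infty|^2\,dx\leq C\ell^{-(2s-1)}$. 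The hypothesis $s>1/2$ enters only to make this exponent positive; the identity and bounds are valid for all $s\in(0,1)$.
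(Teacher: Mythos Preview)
Your proposal is correct and, modulo a $p=2$-specific simplification, follows the same strategy as the paper's proof of its generalization Theorem~\ref{THE} (the quoted theorem itself is cited from \cite{CH1} and not reproved here). The shared ingredients are the dimensional reduction identity (your first step is Lemma~\ref{LE}), the cutoff test function $(u_\ell-u_\infty)\rho_\ell^p$, the one-dimensional density $J(x_1)$ with its two-regime bound (this is the paper's $h_\ell(y_1)$ and estimate \eqref{NC}), the a~priori energy bound $\|w\|_{L^2(D_\ell)}^2\le C\ell$ (Lemma~\ref{LEm}), and the cylinder Poincar\'e inequality (Lemma~\ref{LLe}). Your one departure is the quadratic Caccioppoli identity $(a-b)(\eta_1^2 a-\eta_2^2 b)=(\eta_1 a-\eta_2 b)^2-(\eta_1-\eta_2)^2\,ab$, which in a single step converts $[w,\eta^2 w]_s=0$ into a bound on $[\eta w]_s^2$; this bypasses the rearrangement \eqref{GCCC}, the three-exponent H\"older step, and the auxiliary Lemma~\ref{LKM} that the paper needs for general $p>2$, but of course has no analogue when $p\neq 2$.
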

	The above-mentioned results have been extended in \cite{TBB} to the case of parabolic equations involving the fractional Laplacian. In \cite{VA}, Ambrosio et al. considered the following elliptic problem
	\begin{equation}\label{eq223}\left\{
		\begin{array}{llc}
			(-\Delta_{\mathbb{R}^{N+k}})^{s}u_{\ell}=f_{\infty} & \text{in}\ & D^{N+k}_{\ell}, \\
			u_{\ell} =0 & \text{in} & \mathbb{R}^{N+k}\backslash D^{N+k}_{\ell},
		\end{array}\right.
	\end{equation}
	where $D^{N+k}_{\ell}=\omega^{N}\times B^{k}_{\ell}\subset \mathbb{R}^{N}\times \mathbb{R}^{k}$, $k,N\geq 1$, $\omega^{N}$ is a given bounded and Lipschitz domain in $\mathbb{R}^{N}$, and $B^{k}_{\ell}$ either the rectangle $(-\ell, \ell)^{k}$ or the Euclidean ball of radius $\ell$ centered at the origin. The operator $(-\Delta_{\mathbb{R}^{N+k}})^{s}$ denotes the $(N+k)$-dimensional fractional Laplace operator.  The authors in that paper obtained the following results:
	\begin{itemize}
		\item $ \lim\limits_{\ell\rightarrow+\infty}\inf\limits_{u\in H_{0}^{s}(D^{N+k}_{\ell})\backslash \{0\}}\frac{\left\langle  (-\Delta_{\mathbb{R}^{N+k}})^{s}u,u \right\rangle }{\|u\|_{L^{2}(D^{N+k}_{\ell})}^{2}}= \inf\limits_{u\in H_{0}^{s}(\omega^{N})\backslash \{0\}}\frac{\left\langle  (-\Delta_{\mathbb{R}^{N}})^{s}u,u \right\rangle }{\|u\|_{L^{2}(\omega^{N})}^{2}}=\lambda^{s}(\omega^{N}).$
		\item Let $f_{\infty}\in L^{2}(\omega^{N})$, and let $u_{\ell}$ be the unique weak solution of (\ref{eq223}). Then, 
		$$ \frac{1}{\ell^{k}B^{k}_{1}}\int_{B^{k}_{\ell}}u_{\ell}(x,t)\,dt\rightarrow u_{\infty}\;\;\text{strongly in}\;H_{0}^{s}(\omega)\; \;\text{as}\; \ell \rightarrow +\infty,$$
		where $u_{\infty}$ is the unique weak solution to the following problem:
		\begin{equation*}\label{eq2233}\left\{
			\begin{array}{llc}
				(-\Delta_{\mathbb{R}^{N}})^{s}u_{\infty}=f_{\infty} & \text{in}\ & \omega^{N}, \\
				u_{\infty} =0 & \text{in} & \mathbb{R}^{N}\backslash\omega^{N}.
			\end{array}\right.
		\end{equation*}
	\end{itemize}
	We emphasize that the asymptotic behavior of weak solutions to elliptic and parabolic equations involving the fractional Laplacian in domains becoming unbounded in one or several directions remains largely unaddressed in the literature and deserves further investigation. In fact, the analysis of such problems involving the fractional Laplacian is far from straightforward and significantly more involved than that of the corresponding local case, which was studied in the seminal paper by Chipot and Rougirel \cite{CH2}, where they considered the following parabolic problem
	\begin{equation}\label{eqqq1}\left\{
		\begin{array}{llc}
			\partial_{t}u_{\ell}-\text{div}(A(x,t)\nabla u_{\ell})=f(X_{2},t) & \text{in}\ & D_{\ell}\times (0,T), \\
			u_{\ell} =0 & \text{on} & \partial D_{\ell} \times (0,T), \\
			u_{\ell}(x,0)=u_{0}(X_{2})& \text{in} &D_{\ell},
		\end{array}\right.
	\end{equation}
	with $D_{\ell}:=(-\ell,\ell)^{p}\times \omega$, where $\omega$ is a bounded open subset of $\mathbb{R}^{N-p}$, $1\leq p< N$, $A(x,t)=(a_{ij})_{i,j=1, \ldots, N}$ is an $N\times N$ satisfying certain conditions. They proved that,  for any fixed $\ell_{0}>0$,  the unique weak solution $u_{\ell}$ of (\ref{eqqq1}) converges to $u_{\infty}$  in $L^{2}(0, T;L^{2}(D_{\ell_{0}}))$ and $L^{2}(0, T;H^{1}(D_{\ell_{0}}))$ with a speed faster than any power of $\frac{1}{\ell}$, where $u_{\infty}$ is the unique weak solution of the corresponding cross-section problem
	\begin{equation}\label{eqqq2}\left\{
		\begin{array}{llc}
			\partial_{t}u_{\infty}-\text{div}(A_{2,2}(x,t)\nabla_{X_{2}}u_{\infty})=f(X_{2},t) & \text{in}\ & \omega\times (0,T), \\
			u_{\infty} =0 & \text{on} & \partial \omega \times (0,T), \\
			u_{\infty}(x,0)=u_{0}(X_{2})& \text{in} &\omega.
		\end{array}\right.
	\end{equation}

Furthermore, the authors in \cite{CH2} investigated the asymptotic behavior of solutions as $\ell \rightarrow +\infty$ for a class of quasilinear parabolic equations. Subsequently, Chipot and Rougirel \cite{CH} addressed the same question for a class of elliptic equations. We also point out the works of Guesmia \cite{G1, G2}, Chipot and Xie \cite{M1}, Esposito et al. \cite{ESP}, Jana \cite{VPP}, Rawat et al. \cite{VPPR}, as well as those of Alves, Figueiredo, and Furtado \cite{alves}, and Figueiredo, Pimenta, and Siciliano \cite{figue}, which deal with multiplicity results for stationary equations in expanding domains.
	
	To the best of our knowledge, there is currently no work investigating the asymptotic behavior of weak solutions as $\ell \rightarrow +\infty$ for elliptic and parabolic equations involving the fractional $p$-Laplacian in domains that become unbounded. Motivated by this gap and the aforementioned studies, the main objective of this paper is to analyze the asymptotic behavior of weak solutions to the following nonlocal elliptic problem as $\ell \rightarrow +\infty$:
	\begin{equation}\label{EQ1}\left\{
		\begin{array}{llc}
			(-\Delta)_{p}^{s}u_{\ell}(x)=f(x) & \text{in}\ & \Omega_{\ell}, \\
			u_{\ell} (x)=0 & \text{in} & \mathbb{R}^{N}\backslash\Omega_{\ell},
		\end{array}\right.
	\end{equation}
	as well as the corresponding nonlocal parabolic problem
	\begin{equation}\label{EQ2}\left\{
		\begin{array}{llc}
			\partial_{t}u_{\ell}(x,t)+(-\Delta)_{p}^{s}u_{\ell}(x,t)=f(x,t) & \text{in}\ & \Omega_{\ell}\times (0,T), \\
			u_{\ell}(x,t) =0 & \text{in} & \left(\mathbb{R}^{N}\backslash\Omega_{\ell}\right)\times (0,T), \\
			u_{\ell}(x,0)=u_{0}(x)& \text{in} &\Omega_{\ell},
		\end{array}\right.
	\end{equation}
	where $T>0$,  $\partial_{t}=\partial / \partial t$, and the leading operator $(-\Delta)_{p}^{s}\psi$ is the fractional $p$-Laplace operator defined for smooth functions by
	\begin{equation}\label{FR}
		(-\Delta)_{p}^{s}\psi(x) := C_{N,s,p}\lim_{\epsilon \rightarrow 0^{+}} \int_{\mathbb{R}^{N}\backslash B_{\epsilon}(x)}\frac{|\psi(x)-\psi(y)|^{p-2}(\psi(x)-\psi(y))}{|x-y|^{N+sp}}\,dy\;\;\;\; x\in \mathbb{R}^{N},
	\end{equation}
	where $s\in (0, 1)$, and $B_{\epsilon} (x)$ denotes the open ball in $\mathbb{R}^{N}$ centred at $x\in \mathbb{R}^{N} $ with radius $\epsilon > 0$. The constant $C_{N,s,p}$ (see, e.g., \cite{ANT}) is given by
	\begin{equation}\label{CN}
		C_{N,s,p}=\frac{sp2^{2s-1}\Gamma \left(\frac{N+ps}{2}\right)}{2\pi^{\frac{N-1}{2}}\Gamma (1-s)\Gamma (\frac{p+1}{2})},	
	\end{equation}
	where $\Gamma$ denotes the usual Gamma function. 
	
	In the sequel, we introduce some notations that will be used throughout the paper. For
	$x=(x_{1}, x_{2}, \ldots, x_{N})\in \mathbb{R}^{N}$, we set
	\begin{equation}
		x=(x_{1}, X_{2}), \;\;X_{2}=(x_{2}, x_{3}, \ldots, x_{N}).
	\end{equation}
	Let $\ell>0$. We denote by $ \Omega_{\ell}=(-\ell, \ell)\times \omega \subset\mathbb{R}^{N}$ ($N\geq 2$) the cylinder of length $\ell$, where  $\omega \subset  \mathbb{R}^{N-1}$ is a bounded open set representing the cross-section. A schematic diagram of the domain
	$\Omega_{\ell}$ is shown in \textbf{Fig.1.}
	\begin{center}
		\begin{tikzpicture}[
			scale=1.7,
			IS/.style={blue, thick},
			LM/.style={red, thick},
			axis/.style={very thick, ->, >=stealth', line join=miter},
			important line/.style={thick}, dashed line/.style={dashed, thin},
			every node/.style={color=black},
			dot/.style={circle,fill=black,minimum size=4pt,inner sep=0pt,
				outer sep=-1pt},
			]
			\coordinate (tint) at (2.5,0);
			\coordinate (beg_2) at (-1.8,0);
			\coordinate (beg_3) at (0.15,0);
			\coordinate (M) at (-1.1,0.45);
			\coordinate (M2) at (1.08,0.45);
			\coordinate (M3) at (-2.68,2.3);
			\coordinate (M4) at (2,2.3);
			\coordinate (t) at (1.85,0);
			\coordinate (tt) at (-1.5,0);
			\coordinate (M5) at (0,1.89);
			\draw[black, thick] (-1.3,0.8)--(0,0.8)--(1.75,0.8);
			\draw[black, thick] (-1.3,-0.8)--(0,-0.8)--(1.75,-0.8);
			\draw[thick,->] (-2,0) -- (2.5,0) node[anchor=north west] {$x_{1}$};
			\draw[thick,->] (0.27,0) -- (0.27,1.5) node[anchor=south east] {$X_{2}$};

			\draw[black,thick](-1.3,0)--(-1.3,0.8);
			\draw[black, thick](1.75,0)--(1.75,0.8);
			\draw[black, thick](0.27,-1)--(0.27,0);
			\draw[black, thick](0.27,-1)--(0.27,0);
			\draw[black, thick](0.27,-1)--(0.27,0);
			\draw[black, thick](0.27,-1)--(0.27,0);
			\draw[black, thick](0.27,-1)--(0.27,0);
			\draw[black, thick](0.27,-1)--(0.27,0);
			\draw[black,  thick](-1.3, -0.8)--(-1.3,0);
			\draw[black, thick](1.75, -0.8)--(1.75,0);

			\fill[black](beg_3)node[below]{$0$};

			\fill[black](t)  node[below]{$\ell$};
			\fill[black](tt)  node[below]{$-\ell$};
			
		\end{tikzpicture}
	\end{center}
	\begin{center}
		\textbf{Fig.1.}
	\end{center}
	Next, we recall some known facts about fractional Sobolev spaces. Let $\Omega$ be an open set in $\mathbb{R}^{N}$. For any $s\in(0,1)$ and $p\geq 2$, the space $W^{s,p}(\Omega)$ is defined by
	$$ W^{s,p}(\Omega)=\left\{u\in  L^{p}(\Omega):\, [u]_{s,p, \mathbb{R}^{N}}< +\infty \right\}, $$
	where $[u]_{s,p, \mathbb{R}^{N}}$ is the so-called Gagliardo seminorm, given by
	$$[u]^{p}_{s,p, \mathbb{R}^{N}}:=\int_{\mathbb{R}^{N}\times \mathbb{R}^{N}} \frac{|u(x)-u(y)|^{p}}{|x-y|^{N+sp}}\,dxdy.$$
	We define the space  $W_{0}^{s,p}(\Omega)$ by
	$$ W_{0}^{s,p}(\Omega)=\left\{u\in W^{s,p}(\Omega):\ u=0\;\;\text{a.e. in}\;\mathbb{R}^{N}\backslash \Omega \right\}$$
	equipped with the norm
	$$\|u\|:=\left(\|u\|^{p}_{L^{p}(\Omega)}+[u]^{p}_{s,p, \mathbb{R}^{N}}\right)^{\frac{1}{p}}.$$
	Functions in $W_{0}^{s,p}(\Omega)$ can be regarded as elements of $W_{0}^{s,p}(\mathbb{R}^{N})$ by extending them by zero outside $\Omega$. For $1\leq p<\frac{N}{s}$, the fractional critical exponent is defined by $p^{*}_{s}=\frac{Np}{N-sp}$. If $1\leq r\leq p^{*}_{s}$, we have the fractional Sobolev embedding $W_{0}^{s,p}(\Omega)\hookrightarrow L^{r}(\Omega)$, which is compact when $1\leq r<p^{*}_{s}$.
	For more details on fractional Sobolev spaces and nonlocal fractional equations, we refer to the monograph by Molica Bisci, Rădulescu, and Servadei \cite{V1}.
	
	For each $T>0$, we consider the parabolic space
	$$ W(0, T;W_{0}^{s,p}(\Omega))=\left\{u \in L^{\infty}(0, T; W_{0}^{s,p}(\Omega)), \; u_{t} \; \text{exists and}\; u_{t}\in L^{2}(0, T; L^{2}(\Omega)\right\}. $$
	
	In the rest of the paper, we shall use the following notations:
	\begin{itemize}
		\item  For every $p>1$, $\|\,.\,\|_{p,\Omega}$ denotes the norm in $L^{p}(\Omega)$.
		\item $u(t)=u(.,t)$ for any $t\in [0,T]$.
		\item If $X$ and $Y$ are two quantities (typically non-negative ), we write $X\precsim Y$ or $Y\succsim X$ to mean  the   that $X\leq C Y$  for a constant $C>0$ which may vary from line to line but is independent of $\ell$.
		\item Let $1<p<\infty$, we denote by $p'=\frac{p}{p-1}$  the conjugate of the exponent of $p$.
		\item For $ p>1$, we define the monotone function $\varphi_{p} :\mathbb{R} \rightarrow\mathbb{R} $ by $\varphi_{p}(r)=|r|^{p-2}r$ for any $r\in \mathbb{R}.$
		\item In some cases, we write $d\mu(x,y)=\frac{dxdy}{|x-y|^{N+sp}}.$
	\end{itemize}
	\subsection{The elliptic problem (\ref{EQ1})} We now present our first main result, starting with the definition of weak solutions to the elliptic problem (\ref{EQ1}).
	\begin{definition}\label{DEF1}
		Let $f\in L^{2}(\Omega_{\ell})$. We say that $u_{\ell}\in  W_{0}^{s,p}(\Omega_{\ell})$ is a weak solution of (\ref{EQ1}) if, for every $v\in W_{0}^{s,p}(\Omega_{\ell})$,  the following identity holds
		\begin{equation}\label{IDNT1}
			\frac{C_{N,s,p}}{2}\int_{\mathbb{R}^{N}\times \mathbb{R}^{N}}\frac{\varphi_{p}(u_{\ell}(x)-u_{\ell}(y))(v(x)-v(y))}{|x-y|^{N+sp}}\,dxdy= \int_{\Omega_{\ell}}fv\,dx.
		\end{equation}
	\end{definition}
	The existence and uniqueness of a weak solution to (\ref{EQ1}) can be established by following the same approach as in \cite{DH2}, applying the direct method in the calculus of variations to the functional $E : W_{0}^{s, p}(\Omega)\rightarrow \mathbb{R}$ given by
	$$ E(v)=\frac{C_{N,s,p}}{2p}\int_{\mathbb{R}^{N}\times \mathbb{R}^{N}} \frac{|v(x)-v(y)|^{p}}{|x-y|^{N+sp}}\,dxdy-\int_{\Omega_{\ell}} fv\,dx.$$
	
	Assume that the function $f$  depends only on  $X_{2}$.  We aim to study the asymptotic behavior of $u_{\ell} $ as $\ell \rightarrow+\infty$ on a fixed domain $\Omega_{\ell_{0}}$. More precisely, we intend to show that $u_{\ell} $ converges to $u_{\infty} $ in the $L^{p}$-norm where $u_{\infty} $  is the unique weak solution of
	\begin{equation}\label{EQQ1}\left\{
		\begin{array}{llc}
			(-\Delta')_{p}^{s}u_{\infty}(X_{2})=f(X_{2}) & \text{in}\ & \omega, \\
			u_{\infty}(X_{2}) =0 & \text{in} & \mathbb{R}^{N-1}\backslash\omega,
		\end{array}\right.
	\end{equation}
	where  $(-\Delta')^{s}_{p}$ denotes the $N-1$ dimensional fractional $p$-Laplace operator.
	
	For $f\in L^{2} (\omega)$, we say that $u_{\infty}\in W_{0}^{s,p}(\omega)$ is a weak solution of (\ref{EQQ1}) if, for any $v\in W_{0}^{s, p}(\omega)$ there holds
	\begin{equation}\label{IDNTT1}
		\frac{C_{N-1,s,p}}{2}\int_{\mathbb{R}^{N-1}\times \mathbb{R}^{N-1}}\frac{\varphi_{p}(u_{\infty}(X_{2})-u_{\infty}(Y_{2}))(v(X_{2})-v(Y_{2}))}{|X_{2}-Y_{2}|^{N-1+sp}}\,dX_{2}dY_{2}= \int_{\omega}fv\,dX_{2}.
	\end{equation}
	We note that the existence and uniqueness of a weak solution to (\ref{EQQ1}) can be established in the same manner as above, by applying the direct method in the calculus of variations.
	
	The first main result is stated as follows.
	
	\begin{theorem}\label{THE}
		Let $u_{\ell}$ be the unique weak solution of (\ref{EQ1}) and $u_{\infty}$ the unique weak solution of (\ref{EQQ1}). 	
		Assume that  $f\in L^{2}(\omega)$  is independent of $x_{1} $, that is,  $ f(x)=f(X_{2})$.
		Moreover, if $p>2$ and $s\in \left(\frac{1}{p'}, 1\right)$, then for any fixed $\ell_{0}>0$ there holds
		\begin{equation*}\label{FIN}
			\|u_{\ell}-u_{\infty}\|_{p, \Omega_{\ell_{0}}}\precsim \left[\frac{1}{\ell^{s(p-1)-\frac{p-1}{p}}}+\frac{1}{\ell^{s-\frac{p-1}{p}}}\right]^{\frac{1}{p-1}},
		\end{equation*}
		for all sufficiently large $\ell >0$.
	\end{theorem}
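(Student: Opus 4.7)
The plan rests on a dimensional reduction: extending $u_\infty$ trivially in the axial variable to $\tilde u_\infty(x):=u_\infty(X_2)$ on $\mathbb{R}^N$, a direct calculation shows that $(-\Delta)_p^s\tilde u_\infty(x)=f(X_2)$ pointwise on $\mathbb{R}\times\omega$. Indeed, substituting $y_1=x_1+|X_2-Y_2|t$ in the defining integral of $(-\Delta)_p^s$ and using the Beta-function identity reduces it to the $(N-1)$-dimensional operator, and the explicit constant $C_{N,s,p}$ in (\ref{CN}) is calibrated precisely so that $C_{N,s,p}\int_\mathbb{R}(t^2+1)^{-(N+sp)/2}\,dt=C_{N-1,s,p}$, rendering the identity exact. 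Thus $\tilde u_\infty$ formally satisfies the same equation as $u_\ell$ on $\mathbb{R}\times\omega$, but without the required decay in the $x_1$-direction.

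I would then pick a smooth cutoff $\eta_\ell:\mathbb{R}\to[0,1]$ with $\eta_\ell\equiv 1$ on $[-\ell_0,\ell_0]$, supported in $(-\ell,\ell)$, and $|\eta'_\ell|\precsim 1/\ell$, so that $v:=u_\ell-\eta_\ell\tilde u_\infty$ belongs to $W_0^{s,p}(\Omega_\ell)$. Using $v$ in the weak formulation (\ref{IDNT1}) for $u_\ell$ and in the pointwise equation for $\tilde u_\infty$ (integrated against $v$ in a sense justified by compact support and $\tilde u_\infty\in L^\infty$), the two right-hand sides both equal $\int_{\Omega_\ell} fv\,dx$ and hence cancel upon subtraction, yielding
\begin{equation*}
\int_{\mathbb{R}^{2N}}\frac{(\varphi_p(A)-\varphi_p(B))(v(x)-v(y))}{|x-y|^{N+sp}}\,dxdy=0,
\end{equation*}
where $A:=u_\ell(x)-u_\ell(y)$, $B:=\tilde u_\infty(x)-\tilde u_\infty(y)=u_\infty(X_2)-u_\infty(Y_2)$, and $v(x)-v(y)=A-C$ with $C:=\eta_\ell(x_1)u_\infty(X_2)-\eta_\ell(y_1)u_\infty(Y_2)$. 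The key algebraic step is to split $\varphi_p(A)-\varphi_p(B)=(\varphi_p(A)-\varphi_p(C))+(\varphi_p(C)-\varphi_p(B))$ and to apply the strong monotonicity inequality $(\varphi_p(A)-\varphi_p(C))(A-C)\geq c_p |A-C|^p$ valid for $p\geq 2$; this gives
\begin{equation*}
c_p\,[v]^p_{s,p,\mathbb{R}^N}\leq \left|\int_{\mathbb{R}^{2N}}\frac{(\varphi_p(C)-\varphi_p(B))(v(x)-v(y))}{|x-y|^{N+sp}}\,dxdy\right|.
\end{equation*}

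The main obstacle is estimating the right-hand error integral to derive the rate. Using the pointwise bound $|\varphi_p(C)-\varphi_p(B)|\precsim (|B|+|C|)^{p-2}|R|$ for $p\geq 2$, where $R:=B-C=(1-\eta_\ell(x_1))u_\infty(X_2)-(1-\eta_\ell(y_1))u_\infty(Y_2)$ is the cutoff ``error,'' together with iterated H\"older inequalities (first with exponents $p',p$, then splitting the $p'$-norm of $(|B|+|C|)^{p-2}|R|$ using exponents $\tfrac{p-1}{p-2},p-1$), one arrives at
\begin{equation*}
c_p\,[v]^{p-1}_{s,p,\mathbb{R}^N}\precsim \left(\int_{\mathbb{R}^{2N}}\frac{|R|^p}{|x-y|^{N+sp}}\,dxdy\right)^{1/p},
\end{equation*}
where the uniformly bounded factor $([u_\ell]_{s,p}^{p-2}+[\tilde u_\infty]_{s,p,\text{trunc}}^{p-2})$ has been absorbed into the constant after truncating the integrals to the support where all quantities are finite (this is the technically delicate step, since $\tilde u_\infty\notin W^{s,p}(\mathbb{R}^N)$ and the naive $\|\cdot\|_{L^{p'}(d\mu)}$-norms diverge). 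The Gagliardo seminorm of $R$ is then split by triangle inequality into a ``cutoff-derivative'' piece bounded by $\|u_\infty\|_{L^p(\omega)}[\eta_\ell]_{s,p,\mathbb{R}}$ (where $[\eta_\ell]^p_{s,p,\mathbb{R}}\precsim \ell^{1-sp}$ by direct computation using $|\eta'_\ell|\precsim 1/\ell$ and support on $(-\ell,\ell)$) and a ``boundary-band'' piece controlled by $[u_\infty]_{s,p,\mathbb{R}^{N-1}}$ weighted by $(1-\eta_\ell)$. These two contributions, once substituted and the power $1/(p-1)$ taken, produce the two competing exponents in the theorem. The final passage from the seminorm $[v]_{s,p,\mathbb{R}^N}$ to $\|u_\ell-u_\infty\|_{L^p(\Omega_{\ell_0})}$ uses the fractional Poincar\'e inequality on $W_0^{s,p}(\Omega_\ell)$ (whose constant is uniform in $\ell$ since the cross-section $\omega$ is bounded) and the identity $v=u_\ell-u_\infty$ on $\Omega_{\ell_0}$; the condition $s>1/p'$ is needed both for positivity of the two exponents and for integrability of the relevant tail contributions.
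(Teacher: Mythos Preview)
Your overall strategy---the dimensional reduction of $(-\Delta)^s_p$ via the constant identity $C_{N,s,p}\theta_{N,p}=C_{N-1,s,p}$, testing the subtracted equation against a cutoff of $u_\ell-u_\infty$, and invoking the $p\ge 2$ monotonicity inequality---matches the paper. But two claimed estimates fail, and they are not mere technicalities; they are exactly where the rate comes from.

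First, the assertion that the factor $\bigl([u_\ell]_{s,p}^{p-2}+[\tilde u_\infty]_{s,p,\mathrm{trunc}}^{p-2}\bigr)$ is ``uniformly bounded'' is false: by the energy identity one has $[u_\ell]_{s,p,\mathbb{R}^N}^p\precsim \ell$, and the analogous weighted seminorm of $\tilde u_\infty$ is likewise $\precsim\ell$ (see the paper's Lemmas~\ref{LEm} and~\ref{LKM}). Thus this factor behaves like $\ell^{(p-2)/p}$. In the paper's argument this growth is carried through explicitly and, when multiplied against the $\ell^{-(s-1/p)}$ decay of the cutoff-error term, produces precisely the second exponent $s-\tfrac{p-1}{p}$ in the statement. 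If you absorb it as a constant you would obtain only the stronger rate $s(p-1)-\tfrac{p-1}{p}$, which is not what is claimed.

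Second, your H\"older split genuinely diverges and cannot be rescued by ``truncation''. The piece $\int_{\mathbb{R}^{2N}}|B|^p\,d\mu$ equals $\theta_{N,p}\,[u_\infty]_{s,p,\mathbb{R}^{N-1}}^p\int_{\mathbb{R}}dx_1=\infty$, and likewise your ``boundary-band'' contribution to $\int|R|^p\,d\mu$, namely $\int(1-\eta_\ell(x_1))^p|u_\infty(X_2)-u_\infty(Y_2)|^p\,d\mu$, is infinite because $\int_{\mathbb{R}}(1-\eta_\ell)^p\,dx_1=\infty$. The paper avoids all of this by a different choice of test function: it takes $w=(u_\ell-u_\infty)\rho_\ell^p$ with $\rho_\ell(x_1)=\rho(x_1/\ell)$ (so the cutoff multiplies the \emph{difference}, not just $u_\infty$, and its plateau scales with $\ell$). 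The algebraic rearrangement (\ref{GCCC}) then places a weight $\rho_\ell^p(x_1)$ on the monotone term and a factor $\rho_\ell^p(x_1)-\rho_\ell^p(y_1)$ on the error term; every integral that appears is automatically localized and finite, and the decay of the $x_1$-integral of the cutoff oscillation $h_\ell$ in (\ref{NC}) gives the correct powers of~$\ell$. Your algebraic split $\varphi_p(A)-\varphi_p(B)=(\varphi_p(A)-\varphi_p(C))+(\varphi_p(C)-\varphi_p(B))$ does not achieve this localization.
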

	\begin{remark}
		It is not known whether the result stated in Theorem \ref{THE} can be extended to more general domains of the form $\Omega_{\ell}=(-\ell,\ell)^{m}\times\omega \subset \mathbb{R}^{m}\times \mathbb{R}^{N-m}$ for any $m\geq 2$. Furthermore, its validity for $s\in (0,1)$ also remains an open question.
	\end{remark}
	We recall that, for the classical  $p$-Laplace operator with Dirichlet boundary conditions, Chipot and  Xie \cite[Theorem 2.1]{M1} proved the  estimate
	$$ \|\nabla \left(u_{\ell}- u_{\infty}\right)\|_{p,\Omega_{\ell_{0}}}\precsim \frac{\|\partial_{x_{2}}u_{\infty}\|_{p,\omega}}{\ell^{\frac{2}{p(p-2)}}},$$
	where $p>2$ and $u_{\infty}$ denotes the unique solution of
	\begin{equation*}\label{EQ1bis}\left\{
		\begin{array}{llc}
			-\Delta_{p}u_{\infty}(x_{2})=f(x_{2}) & \text{in}\ & (-1,1), \\
			u_{\infty}\in W_{0}^{1,p}(-1,1).
		\end{array}\right.
	\end{equation*}
	\subsection{The parabolic problem (\ref{EQ2})} In this subsection, we present our second main result, which concerns the asymptotic behavior of weak solutions to the parabolic problem  (\ref{EQ2}).
	\begin{definition}\label{DEF}
		Let $u_{0}\in L^{2}(\Omega_{\ell})$ and $f\in L^{2}(0,T; L^{2}(\Omega_{\ell}))$. We say that $u_{\ell}\in W(0, T; W_{0}^{s,p}(\Omega_{\ell}))$ is a weak solution of (\ref{EQ2}) if,
		for every $v\in W_{0}^{s,p}(\Omega_{\ell})$ the following identity holds for a.e. $ t\in (0, T)$
		\begin{equation}\label{IDNT2}
			\int_{\Omega_{\ell}}\partial_{t}u_{\ell}(t) v\,dx +\frac{C_{N,s,p}}{2}\int_{\mathbb{R}^{N}\times \mathbb{R}^{N}}\frac{\varphi_{p}(u_{\ell}(x,t)-u_{\ell}(y,t))(v(x)-v(y))}{|x-y|^{N+sp}}\,dxdy= \int_{\Omega_{\ell}}f(t)v\,dx.
		\end{equation}
		Moreover, the initial condition
		
		\begin{equation}
			u_{\ell}(0)=u_{0},
		\end{equation}
		is satisfied.
	\end{definition}
	If the source term $f$ depends only on $X_{2}$ and $t$, and the initial datum depends only on $X_{2}$, we aim to show  that the unique weak solution of (\ref{EQ2}) converges, in the $L^{p}$-norm, to the unique weak solution of the following problem:
	\begin{equation}\label{eq3}\left\{
		\begin{array}{llc}
			\partial_{t}u_{\infty}(X_{2},t)+(-\Delta')_{p}^{s}u_{\infty}(X_{2},t)=f (X_{2},t) & \text{in}\ & \omega \times (0,T), \\
			u_{\infty}(X_{2},t) =0 & \text{in} & \left(\mathbb{R}^{N-1}\backslash \omega\right)\times (0, T), \\
			u_{\infty}(X_{2},0)=u_{0}(X_{2}) & \text{in} & \omega.
		\end{array}\right.
	\end{equation}
	The weak solution of problem (\ref{eq3}) is defined similarly to Definition \ref{DEF}, but for the reader’s convenience, we present the full statement here.
	\begin{definition}\label{Def1}
		Let $u_{0}\in L^{2}(\omega)$ and $f\in L^{2}(0,T; L^{2}(\omega))$. We say that $u_{\infty}\in W(0, T; H_{0}^{s}(\omega))$ is a weak solution of (\ref{eq3}) if, 
		for all $v\in W_{0}^{s,p}(\omega)$, the following holds for a.e. $ t\in (0, T)$
		\begin{equation}\label{IDN1}
			\begin{array}{ll} \displaystyle		\int_{\omega}\partial_{t} u_{\infty}(t)v\,dX_{2}+ &\displaystyle \frac{C_{N-1,s,p}}{2}\int_{\mathbb{R}^{N-1}\times \mathbb{R}^{N-1}}\frac{\varphi_{p}(u_{\infty}(X_{2}, t)-u_{\infty}(Y_{2}, t))(v(X_{2})-v(Y_{2}))}{|x-y|^{N-1+sp}}\,dX_{2}dY_{2}=\\
				&\displaystyle  \int_{\omega}f(X_{2},t)v\,dX_{2}.\end{array}
		\end{equation}
		Moreover,
		\begin{equation}
			u_{\infty}(0)=u_{0}(X_{2}).
		\end{equation}	
	\end{definition}
	To solve the parabolic problem (\ref{EQ2}), we can reformulate it as a first-order abstract Cauchy problem in $H=L^{2}(\Omega_{\ell})$ by introducing the functional $\psi$ from $H$ to $(-\infty, +\infty]$ defined by
	$$
	\psi(u_{\ell})=\left\{
	\begin{array}{l}
		\displaystyle	\frac{C_{N,s,p}}{2p}\int_{\mathbb{R}^{N}\times \mathbb{R}^{N}}\frac{|u_{\ell}(x,t)-u_{\ell}(y,t)|^{p}}{|x-y|^{N+sp}}\,dxdy,\quad \mbox{if} \quad u_{\ell} \in W_{0}^{s,p}(\Omega_{\ell}),\\
		\mbox{}\\
		+\infty, \quad \mbox{if} \quad  u_{\ell} \in H \setminus W_{0}^{s,p}(\Omega_{\ell}).
	\end{array}
	\right.
	$$
	It is straightforward to verify that 
	$\psi$ is convex, lower semi-continuous, and proper.
	It was shown in \cite[Theorem 2.3]{DH2} that solving (\ref{EQ2}) in the sense of Definition \ref{DEF} is equivalent to solve the following abstract Cauchy problem:
	\begin{equation}\label{EQC}\left\{
		\begin{array}{llc}
			\displaystyle	\frac{du_{\ell}(t)}{dt}+\partial \psi (u_{\ell}(t))\ni f(t)& \text{in}\ & H, \; 0<t<T, \\
			u_{\ell}(x,0)=u_{0},&  &
		\end{array}\right.
	\end{equation}
	where $\partial \psi (u_{\ell})$ denotes the subdifferential of $\psi$ at  $u_{\ell}$ in the sense of convex analysis. Since $u_{0}\in L^{2}(\Omega_{\ell})$, it follows from \cite[Theorem 3.6 and Lemma 3.3]{H} that there exists $T>0$ and a unique solution $u_{\ell}$ to the Cauchy problem (\ref{EQC}).
	
	Similarly, the existence and uniqueness of a weak solution to (\ref{eq3}) can be obtained by considering the functional $\phi$ from $\mathcal{H}=L^{2}(\omega)$ to $(-\infty, +\infty]$ as
	$$
	\phi(u_{\infty})=\left\{
	\begin{array}{l}
		\displaystyle	\frac{C_{N-1,s,p}}{2p}\int_{\mathbb{R}^{N-1}\times \mathbb{R}^{N-1}}\frac{|u_{\infty}(X_{2},t)-u_{\infty}(Y_{2},t)|^{p}}{|X_{2}-Y_{2}|^{N-1+sp}}\,dX_{2}dY_{2},\quad \mbox{if} \quad u_{\infty} \in W_{0}^{s,p}(\omega),\\
		\mbox{}\\
		+\infty, \quad \mbox{if} \quad  u_{\infty}\in \mathcal{H} \setminus W_{0}^{s,p}(\omega).
	\end{array}
	\right.
	$$
	The second main result of this paper is presented in the following theorem.
	\begin{theorem}\label{THP}
		Let $u_{\ell}$ and  $u_{\infty}$  be the unique weak solutions of (\ref{EQ2}) and (\ref{eq3}), respectively.	Assume that the functions $f\in L^{2}(0,T;L^{2}(\omega))$ and $u_{0}\in L^{2}(\omega)$ are independent of $x_{1} $, that is,
		$$ f(x,t)=f(X_{2}, t), \;\;\;\;u_{0}(x)=u_{0}(X_{2}).$$
		If $p>2$ and $s\in \left(\frac{1}{p'}, 1\right)$, then for any fixed $\ell_{0}>0$ we have
		\begin{equation*}
			\|u_{\ell}-u_{\infty}\|^{2}_{L^{\infty}(0, T;L^{2}(\Omega_{\ell_{0}}))}+ \|u_{\ell}-u_{\infty}\|^{p}_{L^{p}(0, T;L^{p}(\Omega_{\ell_{0}}))}\precsim\frac{1}{\ell^{sp-1}}+\frac{1}{\ell^{\frac{sp}{p-1}-1}},
		\end{equation*}
		for all sufficiently large 
		$\ell >0$.
	\end{theorem}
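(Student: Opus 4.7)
The strategy follows the same blueprint as the proof of Theorem~\ref{THE}, now incorporating the time derivative through a parabolic energy inequality and a Gronwall step. Choose a smooth cutoff $\rho_\ell:\mathbb{R}\to[0,1]$ with $\operatorname{supp}\rho_\ell\subset(-\ell,\ell)$, $\rho_\ell\equiv 1$ on $[-\ell+r_\ell,\ell-r_\ell]$ and $\|\rho_\ell'\|_{L^\infty}\precsim 1/r_\ell$, where $r_\ell>0$ is a tuning parameter to be optimized at the end. Define the comparison function
\begin{equation*}
w_\ell(x,t):=\rho_\ell(x_1)\,u_\infty(X_2,t).
\end{equation*}
Since $u_\infty(\cdot,t)\in W_0^{s,p}(\omega)$ and $\rho_\ell$ is compactly supported in $(-\ell,\ell)$, $w_\ell(\cdot,t)\in W_0^{s,p}(\Omega_\ell)$ for a.e.\ $t\in(0,T)$, and $w_\ell\equiv u_\infty$ on any fixed slab $\Omega_{\ell_0}$ as soon as $\ell-r_\ell>\ell_0$. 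Hence it suffices to estimate $u_\ell-w_\ell$ on $\Omega_\ell$.

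Using $v=u_\ell-w_\ell$ as test function in (\ref{IDNT2}), writing $\int_{\Omega_\ell}\partial_t u_\ell\,v\,dx=\tfrac12\tfrac{d}{dt}\|v\|_{2,\Omega_\ell}^2+\int_{\Omega_\ell}\partial_t w_\ell\,v\,dx$, and decomposing $\varphi_p(U_\ell)(U_\ell-W_\ell)=(\varphi_p(U_\ell)-\varphi_p(W_\ell))(U_\ell-W_\ell)+\varphi_p(W_\ell)(U_\ell-W_\ell)$ (with $U_\ell,W_\ell$ abbreviating the differences of $u_\ell,w_\ell$), the standard monotonicity inequality $(\varphi_p(a)-\varphi_p(b))(a-b)\succsim|a-b|^p$ valid for $p\ge 2$ produces the working inequality
\begin{equation*}
\tfrac12\tfrac{d}{dt}\|u_\ell-w_\ell\|_{2,\Omega_\ell}^2+c\,[u_\ell-w_\ell]_{s,p,\mathbb{R}^N}^p\;\le\;\int_{\Omega_\ell}(f-\partial_t w_\ell)\,v\,dx-\tfrac{C_{N,s,p}}{2}\int_{\mathbb{R}^N\times\mathbb{R}^N}\varphi_p(W_\ell)\bigl(v(x)-v(y)\bigr)\,d\mu(x,y).
\end{equation*}

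The heart of the argument is to cancel the principal parts of the right-hand side using the cross-section equation (\ref{IDN1}) for $u_\infty$. One selects as $(N-1)$-dimensional test function a weighted $x_1$-average of $v$, chosen so that when the resulting identity is integrated in $x_1$ against $\rho_\ell^{p-1}$, it reproduces the leading $N$-dimensional contributions of the $f$- and $\partial_t w_\ell$-terms against the part of $\iint\varphi_p(W_\ell)(v(x)-v(y))\,d\mu$ that ``factors'' as the $(N-1)$-dimensional operator acting on $u_\infty$ paired with $v$. The uncancelled remainders split into two types: (i) a transition-zone error supported in $\{|x_1|\in(\ell-r_\ell,\ell)\}$, of $x_1$-measure $\precsim r_\ell$, controlled in $L^p$ through the $W_0^{s,p}(\omega)$-norm of $u_\infty$ and a Hardy-type bound; and (ii) a long-range nonlocal error arising from pairs with $\rho_\ell(x_1)\neq\rho_\ell(y_1)$ or with $x_1,y_1$ far apart. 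Using the anisotropic bound $|x-y|\ge|x_1-y_1|$, the one-dimensional tail integrations in $|x-y|^{-N-sp}$ are integrable precisely because $s>1/p'$, i.e.\ $sp>1$, and produce factors of order $\ell^{1-sp}$ and $r_\ell^{1-sp}$.

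After absorbing fractional seminorms of $v$ into the left-hand side via Young's inequality and discarding the initial discrepancy $\|(1-\rho_\ell)u_0\|_{2,\Omega_\ell}^2\precsim r_\ell\|u_0\|_{2,\omega}^2$ (of lower order in the chosen regime), integrating in time on $(0,T)$ with a Gronwall argument and finally optimizing $r_\ell$ as a suitable power of $\ell$ yields the announced rate $\ell^{-(sp-1)}+\ell^{-(sp/(p-1)-1)}$; since $w_\ell\equiv u_\infty$ on $\Omega_{\ell_0}$ for $\ell$ large, this delivers the claim. The main obstacle is the analysis of the long-range error (ii): unlike the local Chipot--Rougirel setting, where Leibniz's rule confines errors to a boundary layer, the cutoff here generates global interactions between $\operatorname{supp}\rho_\ell'$ and the rest of the cylinder, and the double integral must be split carefully according to the positions of $x_1,y_1$ relative to $\pm\ell$ and $\pm(\ell-r_\ell)$, with the kernel's anisotropic decay in the long direction providing the gain. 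This tail analysis, together with the construction of the cross-section test function that cancels the leading terms, is the technical core of the proof; the remaining steps are standard, if nontrivial, adaptations of fractional-Sobolev and parabolic techniques.
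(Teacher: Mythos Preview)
Your plan differs substantially from the paper's and, at its core step, is too vague to be called correct.

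The paper does \emph{not} introduce a comparison function $w_\ell=\rho_\ell u_\infty$. Instead it proves the key identity (Lemma~\ref{LEE}): the function $u_\infty(X_2,t)$, extended constantly in $x_1$, is an \emph{exact} weak solution of the $N$-dimensional equation $\partial_t u+(-\Delta)^s_p u=f$ when tested against any $v\in W_0^{s,p}(\Omega_\ell)$ (this uses the constant identity $C_{N,s,p}\theta_{N,p}=C_{N-1,s,p}$ from Lemma~\ref{Lem}). One then subtracts (\ref{IDNT2}) and (\ref{DFFX}) to get a clean equation for $u_\ell-u_\infty$ with zero right-hand side, and tests with $(u_\ell-u_\infty)\rho_\ell^p$, where $\rho_\ell(x_1)=\rho(x_1/\ell)$ has fixed proportions (equal to $1$ on $(-\ell/2,\ell/2)$). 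The cutoff enters only through the \emph{test function}, never inside $\varphi_p$, so the only error terms come from $\rho_\ell^p(x_1)-\rho_\ell^p(y_1)$; these are handled by the one-variable function $h_\ell(y_1)$ in (\ref{NC}) and H\"older's inequality with exponents $p,p,\tfrac{p}{p-2}$. No Gronwall step and no tuning parameter $r_\ell$ are needed; after integrating in $t$ the rate drops out directly.

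Your approach instead buries the cutoff inside the nonlinearity: $W_\ell(x)-W_\ell(y)=\rho_\ell(x_1)\bigl(u_\infty(X_2)-u_\infty(Y_2)\bigr)+\bigl(\rho_\ell(x_1)-\rho_\ell(y_1)\bigr)u_\infty(Y_2)$, so $\varphi_p(W_\ell(x)-W_\ell(y))$ does \emph{not} factor as $\rho_\ell^{p-1}(x_1)\varphi_p\bigl(u_\infty(X_2)-u_\infty(Y_2)\bigr)$. Your proposed cancellation (``a weighted $x_1$-average of $v$ \ldots integrated against $\rho_\ell^{p-1}$'') would only match the leading part of this expansion; the cross terms generated by $\varphi_p$ of a sum are genuinely nonlinear and you have not explained how they are bounded, nor why the resulting errors combine with an optimized $r_\ell$ to produce precisely $\ell^{-(sp-1)}+\ell^{-(sp/(p-1)-1)}$. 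You also create an artificial initial discrepancy $(1-\rho_\ell)u_0$ that the paper's route avoids entirely, since there $u_\ell(\cdot,0)=u_\infty(\cdot,0)=u_0$ exactly. The cleaner path is to prove and use Lemma~\ref{LEE}; once you have it, the parabolic proof is a routine time-integration of the elliptic argument, with no Gronwall and no free parameter.
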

	The proofs of Theorems \ref{THE} and \ref{THP} rely on energy estimates combined with a fractional $p$-Poincaré inequality (see Lemma \ref{LLe}) and an appropriate choice of a test function in the weak formulation.
	
	As noted by Yeressian \cite{YA1}, the estimates established in Theorems \ref{THE} and \ref{THP} play an important role in numerical computations of solutions in large domains, particularly when the interest lies in computing the solution within a smaller subdomain. Furthermore, these estimates are also relevant for proving the well-posedness of equations in unbounded domains with right-hand sides that do not decay at infinity.
	
	The remainder of the paper is organized as follows. In Section $2$, we introduce some preliminaries that will be essential for the proofs of the main results. The subsequent sections are devoted to the proofs of these results.

	\section{Preliminary Results}
	In this section, we present some preliminary results that will be used in the sequel.
	
	\begin{proposition}(\cite[Proposition 1.2]{RE})\label{Pr1}
		Let $V$ be a Banach space that is dense and continuously embedded in the Hilbert space $H$. We identify $H=H^{'}$ so that $V\hookrightarrow H=H^{'}\hookrightarrow V^{'}$. Then the Banach space $$W_{p}:=\{u\in L^{p}(0,T; V), \; u_{t}\in L^{p'}(0,T, V')\}$$ is contained in $C([0,T]; H)$. Moreover,  if $u\in W_{p}$,  $\|u(t)\|_{L^{2}(\Omega)}$ is absolutely continuous on $[0,T]$, and
		$$ \frac{d}{dt}\|u(t)\|^{2}_{L^{2}(\Omega)}=2\langle u_{t}(t), u(t) \rangle, \; \text{a.e. on}\; [0,T]. $$
		In addition, there exists a constant $C > 0$ such that
		$$ \|u\|_{C(0,T; H)}\leq C\|u\|_{W_{p}}, \;\;\text{for all }\, u\in W_{p}.$$
	\end{proposition}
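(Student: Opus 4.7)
The plan is to follow the classical Lions--Magenes / Dautray--Lions argument adapted to the $(p,p')$--duality. The identity $\frac{d}{dt}\|u(t)\|_H^2 = 2\langle u_t(t), u(t)\rangle$ is immediate for $u \in C^1([0,T]; V)$ by directly differentiating $\|u(t)\|_H^2 = \langle u(t), u(t)\rangle_H$ and using that the inner product of $H$ is bilinear continuous. So the task reduces to extending this identity to all $u \in W_p$ by a density-plus-stability argument, and simultaneously obtaining the uniform bound $\|u\|_{C([0,T];H)} \leq C\|u\|_{W_p}$.

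First, I would establish that $C^\infty([0,T]; V)$ is dense in $W_p$. Given $u \in W_p$, one extends $u$ to $\tilde u$ on a slightly larger interval $(-\delta, T+\delta)$ so that $\tilde u \in L^p(-\delta, T+\delta; V)$ and $\tilde u_t \in L^{p'}(-\delta, T+\delta; V')$. This is done by reflection across the endpoints $t=0$ and $t=T$ followed by multiplication with a smooth cutoff (the reflection being compatible with the distributional derivative in $V'$). The mollifications $u_\varepsilon := \tilde u * \rho_\varepsilon$ then lie in $C^\infty([0,T]; V)$ and satisfy $u_\varepsilon \to u$ in $L^p(0,T; V)$ together with $(u_\varepsilon)_t \to u_t$ in $L^{p'}(0,T; V')$ by standard properties of convolution in Bochner spaces.

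Second, for any $v \in C^1([0,T]; V)$ the integrated identity
$$\|v(t)\|_H^2 - \|v(\tau)\|_H^2 = 2\int_\tau^t \langle v_s(s), v(s)\rangle\,ds, \qquad 0\le \tau\le t\le T,$$
holds pointwise. Applied to differences $v = u_\varepsilon - u_{\varepsilon'}$, the duality bound $|\langle v_s, v\rangle_{V',V}| \leq \|v_s\|_{V'}\|v\|_V$ together with Hölder's inequality in time yields
$$\sup_{t\in[0,T]} \|u_\varepsilon(t) - u_{\varepsilon'}(t)\|_H^2 \leq \|u_\varepsilon(\tau) - u_{\varepsilon'}(\tau)\|_H^2 + 2\|(u_\varepsilon)_t - (u_{\varepsilon'})_t\|_{L^{p'}(V')}\|u_\varepsilon - u_{\varepsilon'}\|_{L^p(V)}.$$
Integrating this in $\tau \in [0,T]$ and using $V \hookrightarrow H$ to control the first term by $\|u_\varepsilon - u_{\varepsilon'}\|_{L^p(0,T;H)} \leq C\|u_\varepsilon - u_{\varepsilon'}\|_{L^p(0,T;V)}$ shows that $(u_\varepsilon)$ is Cauchy in $C([0,T]; H)$. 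Thus $u$ admits a continuous representative into $H$, and the same estimate applied to $u_\varepsilon$ itself (against $0$) passes to the limit to give the embedding bound $\|u\|_{C([0,T];H)} \leq C\|u\|_{W_p}$.

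Finally, passing $\varepsilon \to 0$ in the integrated identity for $u_\varepsilon$ (left-hand side converges in $C([0,T];\mathbb{R})$, right-hand side by the Hölder pairing of the approximating sequences) yields
$$\|u(t)\|_H^2 - \|u(\tau)\|_H^2 = 2\int_\tau^t \langle u_s(s), u(s)\rangle\,ds \qquad \text{for all }0\le \tau\le t\le T.$$
Since $s \mapsto \langle u_s(s), u(s)\rangle$ is in $L^1(0,T)$ by Hölder, the function $t \mapsto \|u(t)\|_H^2$ is absolutely continuous, and Lebesgue's differentiation theorem produces the pointwise a.e.\ identity. The main technical obstacle is the construction of the extension $\tilde u$ so that \emph{both} $\tilde u$ and $\tilde u_t$ (as a $V'$-valued distribution) extend compatibly across $t=0$ and $t=T$; once that is secured, everything else is a routine duality and density argument.
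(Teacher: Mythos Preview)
Your argument is the standard Lions--Magenes density-and-stability proof and is correct; however, note that the paper does not actually prove this proposition---it is quoted verbatim from Showalter \cite[Proposition~1.2]{RE} as a preliminary fact, with no proof supplied. So there is no ``paper's own proof'' to compare against, and your sketch simply reconstructs the classical argument behind the cited reference.
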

	Next, we recall the fractional $p$-Poincaré inequality due to Mohanta and Sk \cite[Theorem 1.2]{KK2}, where the best constant is also established.
	
	\begin{lemma}[Poincar\'e Inequality]\label{LLe} Consider the strip $D_{\infty}=\mathbb{R}^{m}\times \omega\subset \mathbb{R}^{N}$  with $1\leq m<N$, where $\omega$ is a bounded open subset of $\mathbb{R}^{N-m}$. Then, for $0<s<1$ and $1<p<\infty$, we have
		\begin{equation}\label{PC}
			P_{N,s,p}^{2}(D_{\infty}):=\inf_{u\in W_{0}^{s,p}(D_{\infty})\backslash \{0\}}\frac{[u]^{p}_{s,p, \mathbb{R}^{N}}}{\|u\|^{p}_{p,D_{\infty}}}=P_{N-m,s,p}^{2}(\omega):=\inf_{u\in W_{0}^{s,p}(\omega)\backslash \{0\}}\frac{[u]^{p}_{s,p,\mathbb{R}^{N-m}}}{\|u\|^{p}_{p,\omega}}>0.
		\end{equation}
	\end{lemma}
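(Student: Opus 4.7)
The plan is to establish the equality of the two sharp Poincar\'e constants together with positivity of $P_{N-m,s,p}^2(\omega)$. Positivity itself is the standard fractional $p$-Poincar\'e inequality on the bounded set $\omega$, a direct consequence of the compact embedding $W_0^{s,p}(\omega)\hookrightarrow L^p(\omega)$.

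For the easy direction $P_{N,s,p}^2(D_\infty)\leq P_{N-m,s,p}^2(\omega)$ I would argue by tensor-product test functions. Given $\varepsilon>0$, pick $v\in W_0^{s,p}(\omega)$ nearly realizing the infimum on the cross-section, choose $\phi\in C_c^\infty(\mathbb{R}^m)$, and set
\begin{equation*}
u_R(x_1,X_2):=\phi(x_1/R)\,v(X_2)\in W_0^{s,p}(D_\infty),\qquad R>0.
\end{equation*}
The computation rests on the Fubini-type identity
\begin{equation*}
\int_{\mathbb{R}^m}(|h_1|^2+r^2)^{-(N+sp)/2}\,dh_1 \;=\; \pi^{m/2}\,\frac{\Gamma\bigl((N-m+sp)/2\bigr)}{\Gamma\bigl((N+sp)/2\bigr)}\,r^{-(N-m+sp)},\qquad r>0,
\end{equation*}
which lets one integrate out the longitudinal variable in the Gagliardo double integral. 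The leading part of $[u_R]^p_{s,p,\mathbb{R}^N}$ scales like $R^m\|\phi\|_p^p$ and, up to the dimensional prefactor above, matches $[v]^p_{s,p,\mathbb{R}^{N-m}}$; the cross contribution arising from finite differences of $\phi$ in $x_1$ scales like $R^{m-sp}[\phi]^p_{s,p,\mathbb{R}^m}\|v\|_p^p$ and is therefore lower order. Dividing by $\|u_R\|^p_{p,D_\infty}=R^m\|\phi\|_p^p\|v\|_p^p$ and sending $R\to\infty$ delivers the inequality (modulo verifying that the dimensional constants cancel against those implicit in the normalization of the seminorms).

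The reverse direction $P_{N,s,p}^2(D_\infty)\geq P_{N-m,s,p}^2(\omega)$ is the harder step. The natural slicing attempt: for $u\in W_0^{s,p}(D_\infty)$, the section $u(x_1,\cdot)\in W_0^{s,p}(\omega)$ for a.e.\ $x_1\in\mathbb{R}^m$, so the cross-sectional Poincar\'e inequality gives
\begin{equation*}
\|u\|^p_{p,D_\infty}=\int_{\mathbb{R}^m}\|u(x_1,\cdot)\|^p_{p,\omega}\,dx_1 \;\leq\; \frac{1}{P_{N-m,s,p}^2(\omega)}\int_{\mathbb{R}^m}[u(x_1,\cdot)]^p_{s,p,\mathbb{R}^{N-m}}\,dx_1.
\end{equation*}
Inverting the Fubini identity, the right-hand integral rewrites as a partial Gagliardo seminorm on $\mathbb{R}^N\times\mathbb{R}^N$ restricted to pairs with matching $x_1$-coordinate (lifted via the $h_1$-integral). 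The main obstacle is to dominate this partial seminorm by the full $[u]^p_{s,p,\mathbb{R}^N}$ with the sharp constant. A crude triangle split
\begin{equation*}
|u(x_1,X_2)-u(x_1,Y_2)|^p\leq 2^{p-1}\bigl(|u(x_1,X_2)-u(y_1,Y_2)|^p+|u(y_1,Y_2)-u(x_1,Y_2)|^p\bigr)
\end{equation*}
produces only $P_{N,s,p}^2(D_\infty)\geq c\,P_{N-m,s,p}^2(\omega)$ with a lossy constant $c<1$, which suffices for positivity but not for equality. To upgrade to the sharp statement, I would take a minimizing sequence on $D_\infty$ and exploit translation invariance in $x_1$: dilating in the $x_1$-direction preserves the $L^p$-mass while concentrating the Gagliardo energy onto pairs with asymptotically vanishing longitudinal separation, so the limiting Rayleigh quotient reduces to that on the cross-section. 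For $p=2$ this concentration is immediate from a direct-integral Fourier decomposition in $x_1$; for general $p>1$ it has to be carried out in physical space, and this is the technical core of the Mohanta--Sk argument.
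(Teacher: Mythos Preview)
The paper does not prove this lemma at all: it is stated as a quotation of \cite[Theorem 1.2]{KK2} (Mohanta--Sk), with no argument supplied. So there is nothing in the paper to compare your sketch against beyond the bare citation; what the authors actually use downstream is only the positivity $P_{N,s,p}^{2}(D_\infty)>0$ with a constant independent of $\ell$ (to run the energy estimates in Lemmas~\ref{LEm}, \ref{LKM}, etc.), never the sharp equality.

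That said, two remarks on your outline. First, you correctly isolate the hard direction and correctly note that the naive triangle-inequality slice loses a factor; your proposed remedy---dilate in $x_1$ so that the Gagliardo energy of a near-minimizer concentrates on pairs with negligible longitudinal separation---is indeed the mechanism in the cited reference, and for $p\neq 2$ this has to be done in physical space rather than via Fourier. Second, the caveat you raise about the dimensional prefactor is not cosmetic. With the \emph{unnormalized} Gagliardo seminorms used here, your tensor-product computation yields $[u_R]^p_{s,p,\mathbb{R}^N}\sim \theta\cdot R^m\|\phi\|_p^p\,[v]^p_{s,p,\mathbb{R}^{N-m}}$ with $\theta=\int_{\mathbb{R}^m}(1+|h|^2)^{-(N+sp)/2}\,dh$, hence only $P_{N,s,p}^2(D_\infty)\le \theta\,P_{N-m,s,p}^2(\omega)$; the same constant $\theta$ reappears in the slicing direction via the inverse of the Fubini identity. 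Getting the two constants to match (so that equality holds exactly as stated) is precisely where one needs the full Mohanta--Sk analysis rather than the soft upper/lower bounds you wrote; your parenthetical ``modulo verifying that the dimensional constants cancel'' is therefore hiding the actual content of the result, not a bookkeeping detail.
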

	This result generalizes the fractional Poincaré inequality obtained in \cite{II2} for the spaces $H_{0}^{s}(\Omega)$, where the optimal constant is obtained. In \cite{KK}, the authors further extended this inequality to the setting of fractional Orlicz–Sobolev spaces.
	
	\begin{lemma}\label{Lem}
		For each $N\in \mathbb{N}$ and $s\in (0,1)$, let $C_{N,s,p}$ be the constant defined in (\ref{CN}). Then one has $C_{N,s,p}\theta_{N,p}=C_{N-1,s,p}$, where
		\begin{equation}\label{EZ}
			\theta_{N,p}=\int_{\mathbb{R}}\frac{dz}{(1+z^{2})^{\frac{N+sp}{2}}}.
		\end{equation}
		Moreover, if $a>0$ then
		$$ \int_{\mathbb{R}} \frac{dz}{\left(1+\frac{|x-z|^{2}}{a^{2}}\right)^{\frac{N+sp}{2}}}=a\theta_{N,p}$$
	\end{lemma}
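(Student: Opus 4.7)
The plan is to verify both assertions by direct computation using the explicit formula (\ref{CN}) for the normalizing constant together with one classical one-variable integral identity.

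First, I would form the ratio $C_{N,s,p}/C_{N-1,s,p}$ using (\ref{CN}). All factors independent of the dimension---namely $sp$, $2^{2s-1}$, $\Gamma(1-s)$, and $\Gamma\bigl(\tfrac{p+1}{2}\bigr)$---cancel between numerator and denominator, while the powers of $\pi$ reduce to $\pi^{-1/2}$, leaving
$$\frac{C_{N,s,p}}{C_{N-1,s,p}} \;=\; \frac{\Gamma\bigl(\tfrac{N+ps}{2}\bigr)}{\sqrt{\pi}\;\Gamma\bigl(\tfrac{N-1+ps}{2}\bigr)}.$$

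Next, I would evaluate $\theta_{N,p}$ by invoking the classical identity
$$\int_{-\infty}^{\infty} \frac{dz}{(1+z^{2})^{\mu}} \;=\; \frac{\sqrt{\pi}\;\Gamma(\mu-\tfrac{1}{2})}{\Gamma(\mu)}, \qquad \mu>\tfrac{1}{2},$$
which follows from the substitution $z=\tan\varphi$ (reducing the integral to $B(\mu-\tfrac{1}{2},\tfrac{1}{2})$ and then to Gamma functions via Euler's formula). Taking $\mu=(N+sp)/2$, which exceeds $\tfrac{1}{2}$ since $N\geq 2$, yields $\theta_{N,p}=\sqrt{\pi}\,\Gamma\bigl(\tfrac{N-1+sp}{2}\bigr)/\Gamma\bigl(\tfrac{N+sp}{2}\bigr)$. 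This is precisely the reciprocal of the ratio computed above, so $C_{N,s,p}\theta_{N,p}=C_{N-1,s,p}$. The second identity then follows from the linear change of variables $w=(z-x)/a$, $dz=a\,dw$, which transforms the integrand into $(1+w^2)^{-(N+sp)/2}$ and produces the factor $a$ outside the integral, giving $a\theta_{N,p}$.

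The only place where care is needed is in bookkeeping the Gamma-function arguments and the powers of $\pi$ when forming the ratio; no genuine analytic obstacle arises, since the classical Beta integral resolves the only nontrivial integration involved, and the second formula is pure affine rescaling.
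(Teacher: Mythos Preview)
Your proof is correct and follows essentially the same route as the paper: both evaluate $\theta_{N,p}$ via the substitution $z=\tan\varphi$ and the Beta--Gamma identity to obtain $\theta_{N,p}=\sqrt{\pi}\,\Gamma\bigl(\tfrac{N-1+sp}{2}\bigr)/\Gamma\bigl(\tfrac{N+sp}{2}\bigr)$, then compare with the explicit formula (\ref{CN}), and handle the second assertion by the affine change of variable. The only cosmetic difference is that you spell out the ratio $C_{N,s,p}/C_{N-1,s,p}$ first, whereas the paper computes $\theta_{N,p}$ and then appeals to (\ref{CN}); also, your remark that $\mu>\tfrac{1}{2}$ ``since $N\geq 2$'' is stronger than needed (already $N\geq 1$ with $sp>0$ gives $\tfrac{N+sp}{2}>\tfrac{1}{2}$), but this does not affect the argument.
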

	\begin{proof}
		The proof is similar to that in \cite{KK2}; however, we include it here for the sake of completeness. Indeed, from \cite[Theorem 8.20]{RUD} we have
		\begin{equation}\label{EqQ}
			\int_{0}^{\frac{\pi}{2}}(\sin \eta)^{2p-1} (\cos \eta)^{2q-1}\,d\eta=\frac{\Gamma (p)\Gamma (q)}{2\Gamma (p+q)} \;\;\text{for all }\, p,q>0,
		\end{equation}
		where $\Gamma$ denotes the gamma function. Further, by using the change of variables $z=\tan(\eta) $ in (\ref{EZ}), we obtain
		$$ \theta_{N,p}=2\int_{0}^{+\infty}\frac{dz}{(1+z^{2})^{\frac{N+sp}{2}}}=2\int_{0}^{\frac{\pi}{2}}(\cos \eta)^{N+sp-2}\,d\eta=\frac{\sqrt{\pi}\Gamma \left(\frac{N+sp-1}{2}\right)}{\Gamma \left(\frac{N+sp}{2}\right)}.$$
		Hence, by the definition of $C_{N,s,p}$ in (\ref{CN}), we deduce the desired result.  The second statement follows by making the change of variables $y=\frac{x-z}{a}.$
	\end{proof}
	In the next lemma, we recall some well-known elementary inequalities that will be used in the following section.
	\begin{lemma}\label{ILM} For any $a,b\in \mathbb{R}_{+}$ and $c,d\in \mathbb{R}$, we have
		\begin{enumerate}
			\item $ \big(|c|^{q-2}c-|d|^{q-2}d\big)(c-d)\geq |c-d|^{q}, \;\;\;\forall q\geq 2,$	
			\item 	$ (a+b)^{q}\leq 2^{q-1}(a^{q}+b^{q}),\;\; \;\forall q\geq 1,$
			\item $ (a+b)^{q}\leq a^{q}+b^{q}, \;\;\;  0\leq q\leq 1,$
			\item $| a^{q} -b^{q} |\leq q|a-b|\left[a^{q-1}+b^{q-1}\right],\;\;\;\forall q\geq 1.$
			\item $ ||c|^{q-2}c -|d|^{q-2}d|\leq C(q)|c-d|\left[|c|^{q-2}+|d|^{q-2}\right], \;\;\forall q>1.$
		\end{enumerate}
	\end{lemma}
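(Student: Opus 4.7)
The plan is to prove the five inequalities separately, grouping them by the calculus tool they require. Inequality (2) follows from convexity of $t \mapsto t^q$ on $[0,\infty)$ (for $q \geq 1$): by Jensen's inequality $\bigl(\tfrac{a+b}{2}\bigr)^q \leq \tfrac{1}{2}(a^q + b^q)$, and multiplying by $2^q$ yields the claim. Inequality (3) is the dual statement under concavity of $t \mapsto t^q$ (for $0 \leq q \leq 1$): since $0^q = 0$ the function is subadditive, or equivalently $(1+x)^q \leq 1 + x^q$ for $x \geq 0$ (checked by comparing derivatives), and the substitution $x = b/a$ gives the bound. Inequality (4) is the mean value theorem applied to $\psi(t) = t^q$: $|a^q - b^q| = q\,\xi^{q-1}|a-b|$ for some $\xi$ between $a$ and $b$, combined with $\xi^{q-1} \leq \max(a,b)^{q-1} \leq a^{q-1} + b^{q-1}$ since $q \geq 1$.

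For inequalities (1) and (5), which govern the odd power map $\varphi_q(r) = |r|^{q-2}r$, the key tool is the fundamental-theorem-of-calculus identity
\begin{equation*}
\varphi_q(c) - \varphi_q(d) = (q-1)(c-d)\int_0^1 \bigl|d + t(c-d)\bigr|^{q-2}\,dt,
\end{equation*}
valid for $q > 1$ (with a routine limiting argument at the origin when $1 < q < 2$, where $\varphi_q$ is still $C^1$ away from zero and Hölder continuous at zero). Inequality (5) then follows by bounding $|d + t(c-d)|^{q-2} \leq C(q)\bigl(|c|^{q-2} + |d|^{q-2}\bigr)$ pointwise---a direct consequence of (2) applied with exponent $q-2$ when $q \geq 2$, and a short case analysis otherwise---and integrating in $t$. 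Inequality (1) comes from multiplying the identity by $(c-d)$ to obtain
\begin{equation*}
\bigl(\varphi_q(c) - \varphi_q(d)\bigr)(c-d) = (q-1)(c-d)^2 \int_0^1 \bigl|d + t(c-d)\bigr|^{q-2}\,dt,
\end{equation*}
and then bounding the integral from below by $|c-d|^{q-2}$ (up to a $q$-dependent constant); this reduces, after normalization, to the non-negative case, where the superadditivity of $t \mapsto t^{q-1}$ on $[0,\infty)$ (available for $q \geq 2$) gives $c^{q-1} - d^{q-1} \geq (c-d)^{q-1}$ for $c \geq d \geq 0$, and thus the stated lower bound.

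The main obstacle is the delicate case analysis for inequality (5) when $1 < q < 2$, where $t \mapsto |t|^{q-2}$ blows up near zero; here one splits the $t$-integration at the point where $d + t(c-d)$ vanishes (if any) and invokes the concavity-based bound (3) with exponent $q-2 \in (-1,0)$ to recombine the pieces. An analogous subtlety arises for (1) when $c$ and $d$ have opposite signs and must be handled by the same partitioning strategy, which is routine but requires care with the constants $C(q)$.
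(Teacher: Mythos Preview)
The paper does not prove this lemma; it is merely stated as a collection of well-known elementary inequalities. Your sketch for (2), (3), and (4) is correct and standard, and the integral representation for $\varphi_q$ is the right tool for (5).

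However, there is a genuine gap in your treatment of (1). The stated inequality---with implicit constant $1$---is actually \emph{false} for $q > 2$ whenever $c$ and $d$ have opposite signs: take $c = 1$, $d = -1$, $q = 3$, so that the left side equals $(1 - (-1))(1 - (-1)) = 4$ while $|c - d|^q = 8$. The sharp version is
\[
\bigl(|c|^{q-2}c - |d|^{q-2}d\bigr)(c - d) \;\geq\; 2^{2-q}\,|c - d|^q, \qquad q \geq 2,
\]
with equality precisely at $c = -d$. Your superadditivity argument does give constant $1$ when $c, d$ have the same sign, but the opposite-sign case is not ``routine'': it is exactly where the constant deteriorates to $2^{2-q}$, and no partitioning strategy will recover the constant $1$ claimed in the statement. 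Since the paper only ever invokes (1) inside $\precsim$ estimates that absorb constants, the weaker inequality is enough for all the applications; but the lemma as written is misstated, and your proof cannot be completed as you describe.
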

	\section{Proof of Theorem \ref{THE}}
	This section is devoted to proving Theorem \ref{THE}. We begin with some preparatory lemmas.
	
	\begin{lemma}\label{LEm}
		Let $u_{\ell}$ be the weak solution to (\ref{EQ1bis}). Then, there holds
		\begin{equation}
			\|u_{\ell}\|^{p}_{p,\Omega_{\ell}}\precsim \int_{\mathbb{R}^{N}\times \mathbb{R}^{N}} \frac{|u_{\ell}(x)-u_{\ell}(y)|^{p}}{|x-y|^{N+sp}}\,dxdy\precsim  \ell \|f\|_{\omega,2}^{\frac{p}{p-1}}.
		\end{equation}
	\end{lemma}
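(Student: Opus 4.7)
\medskip

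My plan is to prove the two inequalities separately. The left inequality is nothing more than the fractional Poincaré inequality applied on a set containing $\Omega_\ell$. Since $\Omega_\ell = (-\ell,\ell)\times\omega$ is contained in the infinite strip $D_\infty=\mathbb{R}\times\omega$, extending $u_\ell$ by zero outside $\Omega_\ell$ yields a function in $W_0^{s,p}(D_\infty)$ with the same Gagliardo seminorm, so Lemma \ref{LLe} (with $m=1$) gives
\begin{equation*}
\|u_\ell\|_{p,\Omega_\ell}^{p}\;=\;\|u_\ell\|_{p,D_\infty}^{p}\;\leq\;\frac{1}{P_{N-1,s,p}^{2}(\omega)}\,[u_\ell]_{s,p,\mathbb{R}^N}^{p},
\end{equation*}
which is a bound with a constant depending only on $\omega,s,p,N$ and, crucially, independent of $\ell$.

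For the right inequality, I would test the weak formulation \eqref{IDNT1} with $v=u_\ell$, obtaining
\begin{equation*}
\frac{C_{N,s,p}}{2}[u_\ell]_{s,p,\mathbb{R}^N}^{p}\;=\;\int_{\Omega_\ell} f\,u_\ell\,dx\;=\;\int_{-\ell}^{\ell}\!\!\int_\omega f(X_2)\,u_\ell(x_1,X_2)\,dX_2\,dx_1,
\end{equation*}
using the hypothesis $f=f(X_2)$. The right-hand side must be bounded in terms of $\|f\|_{2,\omega}$ and $\|u_\ell\|_{p,\Omega_\ell}$, which I would do in three Hölder steps. First, Cauchy–Schwarz in $X_2$ gives $|\int_\omega f u_\ell(x_1,\cdot)\,dX_2|\leq \|f\|_{2,\omega}\|u_\ell(x_1,\cdot)\|_{2,\omega}$. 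Second, since $p>2$ and $\omega$ is bounded, Hölder in $X_2$ with exponents $p/2$ and $p/(p-2)$ yields $\|u_\ell(x_1,\cdot)\|_{2,\omega}\leq |\omega|^{\frac{p-2}{2p}}\|u_\ell(x_1,\cdot)\|_{p,\omega}$. Third, Hölder in $x_1$ over $(-\ell,\ell)$ with exponents $p$ and $p'$ gives $\int_{-\ell}^{\ell}\|u_\ell(x_1,\cdot)\|_{p,\omega}\,dx_1\leq (2\ell)^{1/p'}\|u_\ell\|_{p,\Omega_\ell}$. Collecting these, I obtain
\begin{equation*}
[u_\ell]_{s,p,\mathbb{R}^N}^{p}\;\precsim\;\|f\|_{2,\omega}\,\ell^{1/p'}\,\|u_\ell\|_{p,\Omega_\ell}.
\end{equation*}

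Finally, I would plug the Poincaré bound $\|u_\ell\|_{p,\Omega_\ell}\precsim [u_\ell]_{s,p,\mathbb{R}^N}$ from the first step into the inequality above, absorbing one factor of $[u_\ell]_{s,p,\mathbb{R}^N}$, to get $[u_\ell]_{s,p,\mathbb{R}^N}^{p-1}\precsim \|f\|_{2,\omega}\,\ell^{1/p'}$, and raising to the power $p/(p-1)$ (while noting $\frac{p}{p'(p-1)}=1$) yields the claimed bound $[u_\ell]_{s,p,\mathbb{R}^N}^{p}\precsim \ell\,\|f\|_{2,\omega}^{p/(p-1)}$. I do not anticipate a serious obstacle; the only mildly delicate point is ensuring that every constant produced by Poincaré and Hölder is independent of $\ell$, which is transparent because Lemma \ref{LLe} is applied on the strip $D_\infty$ and the only $\ell$-dependence appears through the explicit factor $(2\ell)^{1/p'}$ in the third Hölder step.
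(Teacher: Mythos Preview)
Your proposal is correct and follows essentially the same approach as the paper: test the weak formulation with $v=u_\ell$, apply H\"older to separate $f$ from $u_\ell$, and then use the fractional $p$-Poincar\'e inequality on the strip to absorb $\|u_\ell\|_{p,\Omega_\ell}$ into $[u_\ell]_{s,p,\mathbb{R}^N}$. The only cosmetic difference is the order of the H\"older steps---the paper applies H\"older with exponents $(p,p')$ on $\Omega_\ell$ and then bounds $\|f\|_{p',\omega}\leq |\omega|^{\frac{p-2}{2(p-1)}}\|f\|_{2,\omega}$, whereas you start with Cauchy--Schwarz in $X_2$ and then pass from $\|u_\ell(x_1,\cdot)\|_{2,\omega}$ to $\|u_\ell(x_1,\cdot)\|_{p,\omega}$---but the resulting bound and the $\ell$-dependence are identical.
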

	\begin{proof}
		The first inequality follows immediately from the fractional  $p$-Poincar\'e inequality. To prove the second part, we take $v=u_{\ell}$ in (\ref{IDNT1}) and apply Hölder's inequality, obtaining
		$$ \frac{C_{N,s,p}}{2} \int_{\mathbb{R}^{N}\times \mathbb{R}^{N}} \frac{|u_{\ell}(x)-u_{\ell}(y)|^{p}}{|x-y|^{N+ps}}\,dxdy\leq \|u_{\ell}\|_{p,\Omega_{\ell}}\left(\int_{-\ell}^{\ell}\int_{\omega}|f(X_{2})|^{p'}\,dx\right)^{\frac{1}{p'}},$$
		where $p'=\frac{p}{p-1}$.
		By the fractional $p$-Poincar\'e inequality we derive
		\begin{align*}
			&\frac{C_{N,s,p}}{2}\int_{\mathbb{R}^{N}\times \mathbb{R}^{N}}  \frac{|u_{\ell}(x)-u_{\ell}(y)|^{p}}{|x-y|^{N+ps}}\,dxdy\\
			&\leq C_{P} \left(\int_{\mathbb{R}^{N}\times \mathbb{R}^{N}}  \frac{|u_{\ell}(x)-u_{\ell}(y)|^{p}}{|x-y|^{N+ps}}\,dxdy\right)^{\frac{1}{p}}\left(\int_{-\ell}^{\ell}\int_{\omega}|f(X_{2})|^{p'}\,dX_{2}dx_{1}\right)^{\frac{1}{p'}},
		\end{align*}
		
		where $C_{P}>0$  is the best constant in the fractional $p$-Poincar\'e inequality. It follows that
		$$
		\frac{C_{N,s,p}}{2}\left(\int_{\mathbb{R}^{N}\times \mathbb{R}^{N}}  \frac{|u_{\ell}(x)-u_{\ell}(y)|^{p}}{|x-y|^{N+ps}}\,dxdy\right)^{\frac{1}{p'}}\leq 2^{\frac{1}{p'}}\ell^{\frac{1}{p'}}C_{P} \left(\int_{\omega}|f(X_{2})|^{p'}\,dX_{2}\right)^{\frac{1}{p'}}.$$
		Since $p'<2$,  Hölder's inequality yields
		$$
		\frac{C_{N,s,p}}{2}\left(\int_{\mathbb{R}^{N}\times \mathbb{R}^{N}}  \frac{|u_{\ell}(x)-u_{\ell}(y)|^{p}}{|x-y|^{N+ps}}\,dxdy\right)^{\frac{1}{p'}}\leq 2^{\frac{1}{p'}}\ell^{\frac{1}{p'}}C_{P}|\omega|^{\frac{p-2}{2(p-1)}} \|f\|_{2,\omega}.$$
		Therefore,
		\begin{equation}
			\int_{\mathbb{R}^{N}\times \mathbb{R}^{N}}  \frac{|u_{\ell}(x)-u_{\ell}(y)|^{p}}{|x-y|^{N+sp}}\,dxdy\leq \ell K \|f\|_{\omega,2}^{\frac{p}{p-1}},
		\end{equation}
		where $K=2^{p'+1}C^{p'}_{P}C^{-p'}_{N,s,P}|\omega|^{\frac{(p-2)p}{2(p-1)^{2}}}.$ Thus, This completes the proof.
	\end{proof}
	The following lemma plays a key role in the proof of Theorem \ref{THE}.
	\begin{lemma}\label{LE}
		Let $x=(x_{1}, X_{2})\in \Omega_{\ell}$ and $u_{\infty}$  denotes the unique weak solution of (\ref{EQQ1}) . Then, for any $v\in W_{0}^{s,p}(\Omega_{\ell})$, there holds
		\begin{equation}\label{DFFR}
			\frac{C_{N,s}}{2}	\int_{\mathbb{R}^{N}\times \mathbb{R}^{N} }\frac{\varphi_{p}\left(u_{\infty}(X_{2})-u_{\infty}(Y_{2})\right)(v(x)-v(y))}{|x-y|^{N+sp}}\,dxdy=\int_{\Omega_{\ell}}fv\,dx.
		\end{equation}
		
	\end{lemma}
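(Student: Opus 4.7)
The plan is to reduce the $N$-dimensional bilinear form on the left-hand side of (\ref{DFFR}) to the $(N-1)$-dimensional one used to define $u_\infty$ in (\ref{IDNTT1}), and then to integrate the resulting identity in $x_1$. The key inputs are the two assertions in Lemma \ref{Lem}: the evaluation of the $y_1$-integral and the constant identity $C_{N,s,p}\,\theta_{N,p}=C_{N-1,s,p}$.

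First, I would exploit antisymmetry. Since $\varphi_p(u_\infty(X_2)-u_\infty(Y_2))$ is antisymmetric under $(x,y)\leftrightarrow(y,x)$ while $|x-y|^{-(N+sp)}$ is symmetric, the standard swap argument rewrites the left-hand side of (\ref{DFFR}) (reading the constant as $C_{N,s,p}$) as
\begin{equation*}
C_{N,s,p}\int_{\mathbb{R}^N\times\mathbb{R}^N} \frac{\varphi_p(u_\infty(X_2)-u_\infty(Y_2))\,v(x)}{|x-y|^{N+sp}}\,dxdy.
\end{equation*}
For $x=(x_1,X_2)$ and $Y_2$ fixed, the second identity in Lemma \ref{Lem} with $a=|X_2-Y_2|$ yields $\int_{\mathbb{R}} dy_1/|x-y|^{N+sp}=\theta_{N,p}\,|X_2-Y_2|^{-(N-1)-sp}$. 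Applying Fubini and the constant identity $C_{N,s,p}\,\theta_{N,p}=C_{N-1,s,p}$, the expression above becomes
\begin{equation*}
C_{N-1,s,p}\int_{\mathbb{R}}\int_{\mathbb{R}^{N-1}\times\mathbb{R}^{N-1}} \frac{\varphi_p(u_\infty(X_2)-u_\infty(Y_2))\,v(x_1,X_2)}{|X_2-Y_2|^{(N-1)+sp}}\,dX_2\,dY_2\,dx_1.
\end{equation*}

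Reversing the antisymmetrisation in the $(X_2,Y_2)$ variables (the same swap trick, now in one dimension lower) and noting that $v(x_1,\cdot)\equiv 0$ for $x_1\notin(-\ell,\ell)$, this equals
\begin{equation*}
\int_{-\ell}^{\ell} \frac{C_{N-1,s,p}}{2}\int_{\mathbb{R}^{N-1}\times\mathbb{R}^{N-1}} \frac{\varphi_p(u_\infty(X_2)-u_\infty(Y_2))\bigl(v(x_1,X_2)-v(x_1,Y_2)\bigr)}{|X_2-Y_2|^{(N-1)+sp}}\,dX_2\,dY_2\,dx_1.
\end{equation*}
The inner integral is precisely the left-hand side of the weak formulation (\ref{IDNTT1}) for $u_\infty$ tested against $v(x_1,\cdot)\in W_0^{s,p}(\omega)$, hence it equals $\int_\omega f(X_2)\,v(x_1,X_2)\,dX_2$. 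Integrating in $x_1\in(-\ell,\ell)$ produces $\int_{\Omega_\ell} fv\,dx$, as required.

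The main obstacle is to legitimise the use of $v(x_1,\cdot)$ as a test function for (\ref{IDNTT1}), since fractional Sobolev regularity in $N$ dimensions does not transfer automatically to $(N-1)$-dimensional slices. I would sidestep this by first establishing (\ref{DFFR}) for $v\in C_c^\infty(\Omega_\ell)$, where slice regularity and all Fubini applications are immediate, and then extending to arbitrary $v\in W_0^{s,p}(\Omega_\ell)$ by density, noting that both sides of (\ref{DFFR}) are continuous linear functionals of $v$ with respect to the $W_0^{s,p}(\Omega_\ell)$ topology (the nonlocal side is controlled via Hölder on the Gagliardo measure $d\mu$, using that $u_\infty$, viewed as a function on $\mathbb{R}^N$ independent of $x_1$, contributes only through the factor $\varphi_p(u_\infty(X_2)-u_\infty(Y_2))$, whose $(X_2,Y_2)$-support is contained in $(\omega\times\mathbb{R}^{N-1})\cup(\mathbb{R}^{N-1}\times\omega)$).
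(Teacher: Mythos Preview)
Your proof is correct and follows essentially the same route as the paper's, only run in the opposite direction: the paper starts from $\int_{\Omega_\ell} f v\,dx$, invokes (\ref{IDNTT1}) on each slice $v(x_1,\cdot)$, inserts the $y_1$-integral via Lemma \ref{Lem}, and finishes with the antisymmetry swap, whereas you begin with the antisymmetry swap, integrate out $y_1$, and then apply (\ref{IDNTT1}) slice by slice. The paper also tacitly uses $v(x_1,\cdot)$ as an admissible test function in (\ref{IDNTT1}) without comment; your explicit density argument via $C_c^\infty(\Omega_\ell)$ is a welcome clarification rather than a departure from the paper's method.
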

	\begin{proof} By Lemma \ref{Lem} and (\ref{IDNTT1}), together with straightforward calculations, we have
		\begin{align*}
			&\int_{\Omega_{\ell}} f(X_{2})v(x)\,dx
			=\int_{\mathbb{R}}\int_{\omega} f(X_{2})v(x_{1}, X_{2})\,dX_{2}dx_{1}\\
			=&\frac{C_{N,s,p}\theta_{N,p}}{C_{N-1,s,p}}C_{N-1,s,p}\int_{\mathbb{R}}\int_{\mathbb{R}^{N-1}} \int_{\mathbb{R}^{N-1}}\frac{|u_{\infty}(X_{2})-u_{\infty}(Y_{2})|^{p-2}(u_{\infty}(X_{2})-u_{\infty}(Y_{2}))v(x_{1}, X_{2})}{|X_{2}-Y_{2}|^{N-1+sp}}\,dY_{2}dX_{2}dx_{1}\\
			=& C_{N,s,p}\theta_{N,p}\int_{\mathbb{R}}\int_{\mathbb{R}^{N-1}} \int_{\mathbb{R}^{N-1}}\frac{|u_{\infty}(X_{2})-u_{\infty}(Y_{2})|^{p-2}(u_{\infty}(X_{2})-u_{\infty}(Y_{2}))v(x_{1}, X_{2})}{|X_{2}-Y_{2}|^{N-1+sp}}\,dY_{2}dX_{2}dx_{1}\\
			=&C_{N,s,p}\int_{\mathbb{R}^{N}} \int_{\mathbb{R}^{N-1}} \frac{|u_{\infty}(X_{2})-u_{\infty}(Y_{2})|^{p-2}(u_{\infty}(X_{2})-u_{\infty}(Y_{2}))v(x)}{|X_{2}-Y_{2}|^{N+sp}}\left(\int_{\mathbb{R}}\frac{dy_{1}}{\left(1+\frac{|x_{1}-y_{1}|^{2}}{|X_{2}-Y_{2}|^{2}}\right)^{\frac{N+sp}{2}}}\right)\,dY_{2}dx \\
			=&C_{N,s,p}\int_{\mathbb{R}^{N}} \int_{\mathbb{R}^{N}} \frac{|u_{\infty}(X_{2})-u_{\infty}(Y_{2})|^{p-2}(u_{\infty}(X_{2})-u_{\infty}(Y_{2}))v(x)}{|x-y|^{N+sp}}\,dydx\\=&\frac{C_{N,s,p}}{2}\int_{\mathbb{R}^{N}} \int_{\mathbb{R}^{N}} \frac{|u_{\infty}(X_{2})-u_{\infty}(Y_{2})|^{p-2}(u_{\infty}(X_{2})-u_{\infty}(Y_{2}))(v(x)-v(y))}{|x-y|^{N+sp}}\,dxdy.
		\end{align*}	
		This completes the proof.	
	\end{proof}		
	Next, in $\mathbb{R}$, we consider a function $\rho = \rho(x_{1})$ whose graph is shown in \textbf{Fig. 2}.
	\begin{center}
		\begin{tikzpicture}[
			scale=2,
			IS/.style={blue, thick},
			LM/.style={red, thick},
			axis/.style={very thick, ->, >=stealth', line join=miter},
			important line/.style={thick}, dashed line/.style={dashed, thin},
			every node/.style={color=black},
			dot/.style={circle,fill=black,minimum size=4pt,inner sep=0pt,
				outer sep=-1pt},
			]
			\coordinate (tint) at (2.5,0);
			\coordinate (beg_2) at (-1.8,0);
			\coordinate (beg_3) at (0.15,0);
			\coordinate (M) at (-1.1,0.45);
			\coordinate (M2) at (1.08,0.45);
			\coordinate (M3) at (-2.68,2.3);
			\coordinate (M4) at (2,2.3);
			\coordinate (t) at (1.5,0);
			\coordinate (t1) at (1,0);
			\coordinate (tt) at (-1.1,0);
			\coordinate (tt2) at (-0.62,0);
			\coordinate (M5) at (0,1.89);
			\coordinate (tf) at (0.2,1.02);
			\draw[black, thick] (-0.55,0.8)--(0,0.8)--(1,0.8);
			\draw[thick,->] (-2,0) -- (2.5,0) node[anchor=north west] {$x_{1}$};
			\draw[thick,->] (0.27,0) -- (0.27,1.5) node[anchor=south east] {$z_{1}$};

			\draw[black,thick](-1,0)--(-0.55,0.8);
			\draw[black, thick](1.5,0)--(1,0.8);
			\draw[black, dashed](1,0)--(1,0.8);
			\draw[black,dashed](-0.56,0)--(-0.55,0.8);

			\fill[black](beg_3)node[below]{$0$};
			
			\fill[black](tf)  node[below]{$1$};
			\fill[black](t)  node[below]{$1$};
			\fill[black](t1)  node[below]{$\frac{1}{2}$};
			\fill[black](tt)  node[below]{$-1$};
			\fill[black](tt2)  node[below]{$-\frac{1}{2}$};
		\end{tikzpicture}
	\end{center}
	\begin{center}
		\textbf{Fig.2.}
	\end{center}
	Clearly, this function satisfies
	\begin{equation}\label{FUN}
		0\leq \rho \leq 1, \;\;\rho =1 \; \text{on}\; \left(\frac{-1}{2}, \frac{1}{2}\right), \;\;\rho =0 \;\text{on }\;\mathbb{R}\backslash(-1,1),\;\;\;|\rho'|\leq 2.
	\end{equation}
	\begin{lemma}\label{LEV}
		Let $u_{\infty}$ be the unique weak solution of (\ref{EQQ1}) and $u_{\ell}$ be the unique weak solution to (\ref{EQ1bis}). Then, there holds,
		\begin{equation}
			(u_{\ell}-u_{\infty})\rho_{\ell}^{p}(x_{1})	\in W_{0}^{s,p}(\Omega_{\ell}).
		\end{equation}
		where $\rho_{\ell}(x_{1})=\rho \left(\frac{x_{1}}{\ell}\right)$. 	
	\end{lemma}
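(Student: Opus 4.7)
The plan is to verify the three defining conditions for $w := (u_\ell - u_\infty)\rho_\ell^p \in W_0^{s,p}(\Omega_\ell)$: (i) $w$ vanishes a.e.\ outside $\Omega_\ell$, (ii) $w \in L^p(\Omega_\ell)$, and (iii) $[w]_{s,p,\mathbb{R}^N} < +\infty$. Properties (i) and (ii) are essentially bookkeeping. Since $\rho_\ell^p$ is supported in $(-\ell,\ell)$ in the $x_1$-variable while $u_\ell$ vanishes off $\Omega_\ell$ and $u_\infty$ (extended to $\mathbb{R}^N$ as an $x_1$-independent function) vanishes for $X_2 \notin \omega$, the support of $w$ lies in $\Omega_\ell$. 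For the $L^p$-bound, $0 \le \rho_\ell \le 1$ together with Lemma \ref{ILM}(2), the fact that $u_\ell \in L^p(\Omega_\ell)$ (Lemma \ref{LEm}), and the identity $\|u_\infty\|_{p,\Omega_\ell}^p = 2\ell\,\|u_\infty\|_{p,\omega}^p$ suffice.

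The core of the argument is (iii), for which I would employ the Leibniz-type decomposition
\begin{equation*}
w(x) - w(y) = \rho_\ell^p(x_1)\bigl[(u_\ell - u_\infty)(x) - (u_\ell - u_\infty)(y)\bigr] + (u_\ell - u_\infty)(y)\bigl[\rho_\ell^p(x_1) - \rho_\ell^p(y_1)\bigr],
\end{equation*}
together with Lemma \ref{ILM}(2) to reduce $|w(x)-w(y)|^p$ to the sum of the $p$-th powers of these two pieces. The first piece, since $\rho_\ell^{p^2} \le 1$, splits into a contribution controlled by $[u_\ell]_{s,p,\mathbb{R}^N}^p$ (finite by Lemma \ref{LEm}) plus one involving $|u_\infty(X_2)-u_\infty(Y_2)|^p$. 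For the latter I would integrate out $y_1$ using the formula in Lemma \ref{Lem}, which collapses the double $x_1,y_1$-integral down to $2\ell \theta_{N,p}/|X_2-Y_2|^{N-1+sp}$, so the contribution is bounded by $C\ell\,[u_\infty]_{s,p,\mathbb{R}^{N-1}}^p < \infty$.

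For the second piece, Lemma \ref{ILM}(4) gives $|\rho_\ell^p(x_1)-\rho_\ell^p(y_1)| \le p|\rho_\ell(x_1)-\rho_\ell(y_1)|$, and since $\rho_\ell^p$ is smooth with compact $x_1$-support it belongs to $W^{s,p}(\mathbb{R})$ with $[\rho_\ell^p]_{s,p,\mathbb{R}}^p < \infty$. Freezing $x_1,y_1$ and integrating over $X_2 \in \mathbb{R}^{N-1}$ via polar coordinates yields an identity of the form
\begin{equation*}
\int_{\mathbb{R}^{N-1}} \frac{dX_2}{(|x_1-y_1|^2 + |X_2-Y_2|^2)^{(N+sp)/2}} = \frac{C'}{|x_1-y_1|^{1+sp}},
\end{equation*}
so that Fubini gives a bound of the form $C\,\|u_\ell - u_\infty\|_{p,\Omega_\ell}^p\cdot[\rho_\ell^p]_{s,p,\mathbb{R}}^p$, which is finite by (ii) and the remark above. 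The main obstacle I foresee is conceptual rather than technical: $u_\infty$ viewed as a function on $\mathbb{R}^N$ is \emph{not} in $W^{s,p}(\mathbb{R}^N)$, because translation invariance in $x_1$ makes its Gagliardo seminorm diverge. Every finiteness estimate above must therefore genuinely exploit either the compact $x_1$-support of $\rho_\ell^p$ (the factor of $2\ell$) or the cancellation coming from $\rho_\ell^p(x_1)-\rho_\ell^p(y_1)$, and one must be careful not to split the two pieces of the Leibniz decomposition in a way that reintroduces this divergence.
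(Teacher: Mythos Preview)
Your overall strategy---a Leibniz-type splitting of $w(x)-w(y)$---is exactly what the paper does, and your treatment of the first piece (retaining the support of $\rho_\ell^{p^2}$ to extract the factor $2\ell$ for the $u_\infty$ contribution) is correct and matches the paper's estimate of its $I_\ell^2$.

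There is a gap in your second piece, however. After integrating out $X_2$ you arrive at
\[
C'\int_{\mathbb{R}^{N}}|(u_\ell-u_\infty)(y)|^p\,g(y_1)\,dy,\qquad g(y_1):=\int_{\mathbb{R}}\frac{|\rho_\ell^p(x_1)-\rho_\ell^p(y_1)|^p}{|x_1-y_1|^{1+sp}}\,dx_1,
\]
but this is \emph{not} bounded by $C\|u_\ell-u_\infty\|_{p,\Omega_\ell}^p\,[\rho_\ell^p]_{s,p,\mathbb{R}}^p$ as you claim: the function $(u_\ell-u_\infty)(y)$ is not supported in $\Omega_\ell$, since for $|y_1|>\ell$ it equals $-u_\infty(Y_2)$ and does not decay in $y_1$. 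A product of two $L^1$-type quantities does not control the integral of a product, and $\|u_\ell-u_\infty\|_{p,\mathbb{R}^N}=+\infty$. This is precisely the divergence you warn about in your final paragraph, resurfacing after the split. The repair is routine once noticed: treat $|y_1|\le\ell$ (where $g$ is bounded and $\int_{\Omega_\ell}|u_\ell-u_\infty|^p<\infty$) and $|y_1|>\ell$ (where the integrand becomes $|u_\infty(Y_2)|^p g(y_1)$ and one checks $\int_{|y_1|>\ell}g(y_1)\,dy_1<\infty$ from the decay of the kernel) separately. The paper sidesteps the issue by handling $u_\ell\rho_\ell^p$ and $u_\infty\rho_\ell^p$ separately from the outset; for the former your Fubini argument works verbatim since $u_\ell$ is supported in $\Omega_\ell$, and for the latter the paper uses a direct near/far split in $|x-y|$ together with the Lipschitz bound $|\rho_\ell'|\le 2/\ell$ rather than integrating out $X_2$.
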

	\begin{proof}
		It is sufficient to prove that $\psi_{\ell}(x):=u_{\infty}(X_{2})\rho^{p} \left(\frac{x_{1}}{\ell}\right)\in W_{0}^{s,p}(\Omega_{\ell})$. The proof of $u_{\ell}(x)\rho^{p} \left(\frac{x_{1}}{\ell}\right)\in W_{0}^{s,p}(\Omega_{\ell})$  follows by the same arguments. Indeed, it is clear that $\psi_{\ell}\in L^{p}(\Omega_{\ell})$.  We now consider the Gagliardo seminorm of $\psi_{\ell}$:
		\begin{eqnarray*}
			[\psi_{\ell}]^{p}_{s,p, \mathbb{R}^{N}}&=&\int_{\mathbb{R}^{N}}\int_{\mathbb{R}^{N}} \frac{|\psi_{\ell}(x)-\psi_{\ell}(y)|^{p}}{|x-y|^{N+sp}}\,dxdy\\
			&=& \int_{\Omega_{\ell}}\int_{\mathbb{R}^{N}} \frac{|\psi_{\ell}(x)-\psi_{\ell}(y)|^{p}}{|x-y|^{N+sp}}\,dxdy+ \int_{\mathbb{R}^{N}\backslash\Omega_{\ell}}\int_{\mathbb{R}^{N}} \frac{|\psi_{\ell}(x)-\psi_{\ell}(y)|^{p}}{|x-y|^{N+sp}}\,dxdy\\
			&\leq & 2 \int_{\Omega_{\ell}}\int_{\mathbb{R}^{N}} \frac{|\psi_{\ell}(x)-\psi_{\ell}(y)|^{p}}{|x-y|^{N+sp}}\,dxdy+\underbrace{\int_{\mathbb{R}^{N}\backslash\Omega_{\ell}}\int_{\mathbb{R}^{N}\backslash\Omega_{\ell}} \frac{|\psi_{\ell}(x)-\psi_{\ell}(y)|^{p}}{|x-y|^{N+sp}}\,dxdy}_{=0}\\
			&=&I_{\ell}.
		\end{eqnarray*}
		Thus, it remains to show that $I_{\ell}<+\infty$. Using the inequality $(2)$ in Lemma \ref{ILM} and the symmetry of the integral in the Gagliardo norm in $x$ and $y$, we obtain
		\begin{eqnarray*}
			I_{\ell}=2\int_{\Omega_{\ell}}\int_{\mathbb{R}^{N}} \frac{|\psi_{\ell}(x)-\psi_{\ell}(y)|^{p}}{|x-y|^{N+sp}}\,dxdy&=& 2\int_{\Omega_{\ell}}\int_{\mathbb{R}^{N}} \frac{|\psi_{\ell}(x)-\psi_{\ell}(y)|^{p}}{|x-y|^{N+sp}}\,dydx\\
			&\leq &
			2^{p}\int_{\Omega_{\ell}}\int_{\mathbb{R}^{N}} \frac{|u_{\infty}(X_{2})|^{p}|\rho^{p}_{\ell}(x_{1})-\rho^{p}_{\ell}(y_{1})|^{p}}{|x-y|^{N+sp}}\,dydx\\
			&&+ 2^{p} \int_{\Omega_{\ell}}\int_{\mathbb{R}^{N}} \frac{\rho^{p^{2}}_{\ell}(y_{1})|u_{\infty}(X_{2})-u_{\infty}(Y_{2})|^{p}}{|x-y|^{N+sp}}\,dydx\\
			&=& I^{1}_{\ell}+I^{2}_{\ell}.
		\end{eqnarray*}
		\textbf{Estimate of $I^{2}_{\ell}$.} We first note that
		$$\int_{\Omega_{\ell}} \int_{\mathbb{R}^{N}} \frac{|u_{\infty}(X_{2})-u_{\infty}(Y_{2})|^{p}}{|x-y|^{N+sp}}\,dydx< +\infty.$$
		Indeed,
		\begin{multline*}
			\int_{\Omega_{\ell}} \int_{\mathbb{R}^{N}} \frac{|u_{\infty}(X_{2})-u_{\infty}(Y_{2})|^{p}}{|x-y|^{N+sp}}\,dxdy	=
			\int_{-\ell}^{\ell}\int_{\omega} \int_{\mathbb{R}^{N}} \frac{|u_{\infty}(X_{2})-u_{\infty}(Y_{2})|^{p}}{|x-y|^{N+sp}}\,dydX_{2}dx_{1}\\
			= \int_{-\ell}^{\ell}\int_{\omega} \int_{\mathbb{R}^{N}} \frac{|u_{\infty}(X_{2})-u_{\infty}(Y_{2})|^{p}}{|X_{2}-Y_{2}|^{N+sp}\left(1+\frac{|x_{1}-y_{1}|^{2}}{|X_{2}-Y_{2}|^{2}}\right)^{\frac{N+sp}{2}}}\,dydX_{2}dx_{1}\\
			=\int_{-\ell}^{\ell}\int_{\omega} \int_{\mathbb{R}^{N-1}} \frac{|u_{\infty}(X_{2})-u_{\infty}(Y_{2})|^{p}}{|X_{2}-Y_{2}|^{N+sp}}\left(\int_{\mathbb{R}}\frac{dy_{1}}{\left(1+\frac{|x_{1}-y_{1}|^{2}}{|X_{2}-Y_{2}|^{2}}\right)^{\frac{N+sp}{2}}}\right)\,dY_{2}dX_{2}dx_{1}.
		\end{multline*}
		Applying Lemma \ref{Lem}, we deduce
		\begin{eqnarray}\label{EST}
			\nonumber \int_{\Omega_{\ell}} \int_{\mathbb{R}^{N}} \frac{|u_{\infty}(X_{2})-u_{\infty}(Y_{2})|^{p}}{|x-y|^{N+sp}}\,dxdy &\leq&  \theta_{N,p}\int_{-\ell}^{\ell}\int_{\mathbb{R}^{N-1}} \int_{\mathbb{R}^{N-1}} \frac{|u_{\infty}(X_{2})-u_{\infty}(Y_{2})|^{p}}{|X_{2}-Y_{2}|^{N-1+sp}}\,dY_{2}dX_{2}dx_{1},\\
			&\leq &2\ell \theta_{N,p} [u_{\infty}]^{p}_{s,p, \mathbb{R}^{N-1}}.
		\end{eqnarray}
		where $\theta_{N,p}$ is given in Lemma \ref{Lem}. Since,  $0\leq \rho_{\ell} \leq 1$, we conclude
		\begin{equation}\label{ESA}
			I^{2}_{\ell}\leq 2^{p+1}\ell \theta_{N,p} [u_{\infty}]^{p}_{s,p, \mathbb{R}^{N-1}}< +\infty.
		\end{equation}
		\textbf{Estimate of $I^{1}_{\ell}$.}	By inequality $(4)$ in Lemma \ref{ILM}, we have
		\begin{equation}\label{IMII}
			\left|\rho^{p}_{\ell}(x_{1})-\rho^{p}_{\ell}(y_{1})\right|\leq p\left|\rho_{\ell}(x_{1})-\rho_{\ell}(y_{1})\right|\left[\rho^{p-1}_{\ell}(x_{1})+\rho^{p-1}_{\ell}(y_{1})\right], \;\;\;\forall x_{1},y_{1}\in \mathbb{R}.
		\end{equation}
		Using this and the fact that $0\leq \rho_{\ell}\leq 1$, we obtain
		\begin{align*}
			I^{1}_{\ell}\leq& 2^{p}p^{p}\int_{\Omega_{\ell}} \int_{\mathbb{R}^{N}} \frac{|u_{\infty}(X_{2})|^{p}\left|\rho_{\ell}(x_{1})-\rho_{\ell}(y_{1})\right|^{p}\left[\rho^{p-1}_{\ell}(x_{1})+\rho^{p-1}_{\ell}(y_{1})\right]^{p}}{|x-y|^{N+sp}}\,dydx\\ \leq& 2^{p+1}\int_{\Omega_{\ell}}\int_{|x-y|< 1} \frac{|u_{\infty}(X_{2})|^{p}\left|\rho_{\ell}(x_{1})-\rho_{\ell}(y_{1})\right|^{p}}{|x-y|^{N+sp}}\,dydx\\&+2^{p+1}\int_{\Omega_{\ell}}\int_{|x-y|\geq 1} \frac{|u_{\infty}(X_{2})|^{p}\left|\rho_{\ell}(x_{1})-\rho_{\ell}(y_{1})\right|^{p}}{|x-y|^{N+sp}}\,dydx\\
			\leq&\frac{C}{\ell^{p}}\int_{\Omega_{\ell}}\int_{|x-y|< 1} \frac{|u_{\infty}(X_{2})|^{p}|x-y|^{p}}{|x-y|^{N+sp}}\,dydx+ C \int_{\Omega_{\ell}} \int_{|x-y|\geq1}  \frac{|u_{\infty}(X_{2})|^{p}}{|x-y|^{N+sp}}\,dydx\\
			=& \frac{C}{\ell^{p-1}}\|u_{\infty}\|^{p}_{p,\omega}\int_{B_{1}(0)}\frac{1}{|z|^{N+sp-p}}\,dz+C\ell \|u_{\infty}\|^{p}_{p,\omega}\int_{\mathbb{R}^{N}\backslash B_{1}(0)}\frac{1}{|z|^{N+sp}}\,dz<  +\infty.
		\end{align*}
		Combining the above estimates for $I^{1}_{\ell}$ and $I^{2}_{\ell}$ yields the desired conclusion.
	\end{proof}
	\begin{lemma}\label{LKM}
		Let $u_{\infty}$ be the unique weak solution of (\ref{EQQ1}). Then 
		\begin{align*}
			&\int_{\mathbb{R}^{N}\times \mathbb{R}^{N}} \frac{|u_{\infty}(X_{2})-u_{\infty}(Y_{2})|^{p}\rho^{p}_{\ell}(x_{1})}{|x-y|^{N+sp}}\,dxdy\\&\precsim \int_{\mathbb{R}^{N}\times \mathbb{R}^{N}}\frac{|u_{\infty}(Y_{2})|^{p}|\rho_{\ell}(x_{1})-\rho_{\ell}(y_{1})|^{p}}{|x-y|^{N+sp}}\,dxdy +\ell \|f\|_{2,\omega}\|u_{\infty}\|_{p,\omega}.
		\end{align*}
	\end{lemma}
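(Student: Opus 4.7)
The plan is to test the identity from Lemma \ref{LE} against the function $v(x) := u_{\infty}(X_{2})\rho_{\ell}^{p}(x_{1})$, which belongs to $W_{0}^{s,p}(\Omega_{\ell})$ by the argument in Lemma \ref{LEV}. I would then write
\begin{equation*}
v(x)-v(y) = \rho_{\ell}^{p}(x_{1})\bigl(u_{\infty}(X_{2})-u_{\infty}(Y_{2})\bigr) + u_{\infty}(Y_{2})\bigl(\rho_{\ell}^{p}(x_{1})-\rho_{\ell}^{p}(y_{1})\bigr),
\end{equation*}
so that, after multiplying by $\varphi_{p}(u_{\infty}(X_{2})-u_{\infty}(Y_{2}))$, the first piece reproduces exactly the weighted Gagliardo integral on the left-hand side of the desired inequality. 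This recasts the identity as
\begin{equation*}
\tfrac{C_{N,s,p}}{2}\int\!\!\int \tfrac{|u_{\infty}(X_{2})-u_{\infty}(Y_{2})|^{p}\rho_{\ell}^{p}(x_{1})}{|x-y|^{N+sp}}\,dxdy = \int_{\Omega_{\ell}} f\,u_{\infty}\rho_{\ell}^{p}\,dx - \tfrac{C_{N,s,p}}{2}\int\!\!\int \tfrac{\varphi_{p}(u_{\infty}(X_{2})-u_{\infty}(Y_{2}))\,u_{\infty}(Y_{2})(\rho_{\ell}^{p}(x_{1})-\rho_{\ell}^{p}(y_{1}))}{|x-y|^{N+sp}}\,dxdy.
\end{equation*}

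For the source term I would apply Cauchy--Schwarz in $X_{2}$ followed by $\int_{-\ell}^{\ell}dx_{1}=2\ell$, and finally use $\|u_{\infty}\|_{2,\omega}\leq|\omega|^{\frac{p-2}{2p}}\|u_{\infty}\|_{p,\omega}$ (valid since $\omega$ is bounded and $p>2$) to arrive at the target bound $\precsim \ell\|f\|_{2,\omega}\|u_{\infty}\|_{p,\omega}$.

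For the cross term, I would first take absolute values and invoke inequality $(4)$ of Lemma \ref{ILM} to estimate $|\rho_{\ell}^{p}(x_{1})-\rho_{\ell}^{p}(y_{1})|\leq p|\rho_{\ell}(x_{1})-\rho_{\ell}(y_{1})|(\rho_{\ell}^{p-1}(x_{1})+\rho_{\ell}^{p-1}(y_{1}))$. The integrand then splits into two symmetric pieces; on each I would regroup the factors as $[|u_{\infty}(X_{2})-u_{\infty}(Y_{2})|\rho_{\ell}(x_{1})]^{p-1}\cdot|u_{\infty}(Y_{2})||\rho_{\ell}(x_{1})-\rho_{\ell}(y_{1})|$ (and similarly with $\rho_{\ell}(y_{1})$) and apply Young's inequality with conjugate exponents $p/(p-1)$ and $p$ and a small parameter $\varepsilon>0$. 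This produces $\varepsilon$-multiples of $\int\!\!\int|u_{\infty}(X_{2})-u_{\infty}(Y_{2})|^{p}\rho_{\ell}^{p}(x_{1})\,d\mu$ (and, for the $\rho_{\ell}(y_{1})$ piece, the same integral with $\rho_{\ell}^{p}(y_{1})$, which equals the former after the symmetric change $x\leftrightarrow y$), plus $C_{\varepsilon}\int\!\!\int|u_{\infty}(Y_{2})|^{p}|\rho_{\ell}(x_{1})-\rho_{\ell}(y_{1})|^{p}\,d\mu$, precisely the remaining term on the right-hand side.

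Choosing $\varepsilon$ small enough to absorb the $|u_{\infty}(X_{2})-u_{\infty}(Y_{2})|^{p}\rho_{\ell}^{p}$ contributions back into the left-hand side then yields the claim. The main obstacle is bookkeeping: one has to make sure that after using inequality $(4)$ both resulting pieces (involving $\rho_{\ell}^{p-1}(x_{1})$ and $\rho_{\ell}^{p-1}(y_{1})$) produce absorbable weighted seminorms, the $\rho_{\ell}^{p}(y_{1})$ version being handled via the $x\leftrightarrow y$ symmetry of the Gagliardo kernel. All constants appearing are independent of $\ell$ (they depend on $p$, $s$, $N$, $|\omega|$, and $C_{N,s,p}$), consistent with the $\precsim$ notation.
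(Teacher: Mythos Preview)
Your proposal is correct and follows essentially the same route as the paper's proof: test Lemma \ref{LE} with $v=u_{\infty}\rho_{\ell}^{p}$, use inequality $(4)$ of Lemma \ref{ILM} on $|\rho_{\ell}^{p}(x_{1})-\rho_{\ell}^{p}(y_{1})|$, apply Young's inequality with a small $\varepsilon$, exploit the $x\leftrightarrow y$ symmetry of the kernel to turn the $\rho_{\ell}^{p}(y_{1})$-weighted term into a $\rho_{\ell}^{p}(x_{1})$-weighted one, and absorb. The only cosmetic difference is that the paper applies Young to the whole factor $\rho_{\ell}^{p-1}(x_{1})+\rho_{\ell}^{p-1}(y_{1})$ and then splits via $(a+b)^{p/(p-1)}\leq 2^{1/(p-1)}(a^{p/(p-1)}+b^{p/(p-1)})$, whereas you split first and then apply Young on each piece; both lead to the same absorbable terms.
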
	
	\begin{proof}
		From Lemma \ref{LEV}, we have $u_{\infty}\rho^{p}_{\ell} \in W_{0}^{s, p}(\Omega_{\ell})$. Choosing $v= u_{\infty}\rho^{p}_{\ell} $ in (\ref{DFFR}) yields
		\begin{equation}\label{DFFR1}
			\frac{C_{N,s,p}}{2}	\int_{\mathbb{R}^{N}\times \mathbb{R}^{N}}\frac{\varphi_{p}(u_{\infty}(X_{2})-u_{\infty}(Y_{2}))((u_{\infty}\rho^{p}_{\ell})(x)-(u_{\infty}\rho^{p}_{\ell})(y))}{|x-y|^{N+sp}}\,dxdy=\int_{\Omega_{\ell}}fu_{\infty}\rho^{p}_{\ell}\,dx.
		\end{equation}
		From this identity, we deduce
		\begin{align*}
			&\frac{C_{N,s,p}}{2}	\int_{\mathbb{R}^{N}\times \mathbb{R}^{N}}\frac{|u_{\infty}(X_{2})-u_{\infty}(Y_{2})|^{p}\rho^{p}_{\ell}(x_{1})}{|x-y|^{N+sp}}\,dxdy\\&
			\leq\int_{\Omega_{\ell}}fu_{\infty}\rho^{p}_{\ell}\,dx+	\frac{C_{N,s,p}}{2}	\int_{\mathbb{R}^{N}\times \mathbb{R}^{N}}\frac{\varphi_{p}(u_{\infty}(X_{2})-u_{\infty}(Y_{2}))|u_{\infty}(Y_{2})|\left(\rho^{p}_{\ell}(x_{1})-\rho^{p}_{\ell}(y_{1})\right)}{|x-y|^{N+sp}}\,dxdy.
		\end{align*}
		Applying (\ref{IMII}) to the last term, we get
		\begin{align*}
			&\frac{C_{N,s,p}}{2}	\int_{\mathbb{R}^{N}\times \mathbb{R}^{N}}\frac{|u_{\infty}(X_{2})-u_{\infty}(Y_{2})|^{p}\rho^{p}_{\ell}(x_{1})}{|x-y|^{N+sp}}\,dxdy\\
			\leq&\int_{\Omega_{\ell}}fu_{\infty}\rho^{p}_{\ell}\,dx\\&+	\frac{pC_{N,s,p}}{2}	\int_{\mathbb{R}^{N}\times \mathbb{R}^{N}}\frac{|u_{\infty}(X_{2})-u_{\infty}(Y_{2})|^{p-1}\left(\rho^{p-1}_{\ell}(x_{1})+\rho^{p-1}_{\ell}(y_{1})\right)|u_{\infty}(Y_{2})||\rho_{\ell}(x_{1})-\rho_{\ell}(y_{1})|}{|x-y|^{N+sp}}\,dxdy.
		\end{align*}
		By Young’s inequality and Hölder’s inequality, for any $\epsilon>0$,
		\begin{align*}
			&\frac{C_{N,s,p}}{2}	\int_{\mathbb{R}^{N}\times \mathbb{R}^{N}}\frac{|u_{\infty}(X_{2})-u_{\infty}(Y_{2})|^{p}\rho^{p}_{\ell}(x_{1})}{|x-y|^{N+sp}}\,dxdy\\
			\leq& \ell \|f\|_{2,\omega}\|u_{\infty}\|_{p,\omega}+	\epsilon	\int_{\mathbb{R}^{N}\times \mathbb{R}^{N}}\frac{|u_{\infty}(X_{2})-u_{\infty}(Y_{2})|^{p}\left(\rho^{p-1}_{\ell}(x_{1})+\rho^{p-1}_{\ell}(y_{1})\right)^{\frac{p}{p-1}}}{|x-y|^{N+sp}}\,dxdy\\
			&+C(\epsilon)\int_{\mathbb{R}^{N}\times \mathbb{R}^{N}}\frac{|u_{\infty}(Y_{2})|^{p}|\rho_{\ell}(x_{1})-\rho_{\ell}(y_{1})|^{p}}{|x-y|^{N+sp}}\,dxdy \\
			\leq &\ell \|f\|_{2,\omega}\|u_{\infty}\|_{p,\omega}+	2^{\frac{1}{p-1}}\epsilon	\int_{\mathbb{R}^{N}\times \mathbb{R}^{N}}\frac{|u_{\infty}(X_{2})-u_{\infty}(Y_{2})|^{p}\left(\rho^{p}_{\ell}(x_{1})+\rho^{p}_{\ell}(y_{1})\right)}{|x-y|^{N+sp}}\,dxdy\\
			&+C(\epsilon)\int_{\mathbb{R}^{N}\times \mathbb{R}^{N}}\frac{|u_{\infty}(Y_{2})|^{p}|\rho_{\ell}(x_{1})-\rho_{\ell}(y_{1})|^{p}}{|x-y|^{N+sp}}\,dxdy\\
			=& \ell \|f\|_{2,\omega}\|u_{\infty}\|_{p,\omega}+	2^{\frac{p}{p-1}}\epsilon	\int_{\mathbb{R}^{N}\times \mathbb{R}^{N}}\frac{|u_{\infty}(X_{2})-u_{\infty}(Y_{2})|^{p}\rho^{p}_{\ell}(x_{1})}{|x-y|^{N+sp}}\,dxdy\\
			&+C(\epsilon)\int_{\mathbb{R}^{N}\times \mathbb{R}^{N}}\frac{|u_{\infty}(Y_{2})|^{p}|\rho_{\ell}(x_{1})-\rho_{\ell}(y_{1})|^{p}}{|x-y|^{N+sp}}\,dxdy.
		\end{align*}
		Choosing $\epsilon>0$  small enough and absorbing the second term into the left-hand side gives the claimed estimate.
	\end{proof}
	\begin{proof} (Theorem \ref{THE})
		We are now in a position to conclude the proof of Theorem \ref{THE}. For brevity, we introduce the notations $v_{\ell}(x,y):=u_{\ell}(x)-u_{\ell}(y)$ and $v_{\infty}(x,y):=u_{\infty}(X_{2})-u_{\infty}(Y_{2})$. Subtracting identities (\ref{IDNT1}) and (\ref{DFFR}), we obtain
		\begin{equation}\label{RW}
			\frac{C_{N,s,p}}{2}\int_{\mathbb{R}^{N}\times \mathbb{R}^{N}}\frac{\left[\varphi_{p}(v_{\ell}(x,y))-\varphi_{p}(v_{\infty}(x,y))\right](w(x)-w(y))}{|x-y|^{N+sp}}\,dxdy=0,
		\end{equation}
		for each $w\in W_{0}^{s,p}(\Omega_{\ell})$.
		By Lemma \ref{LEV}, we have $(u_{\ell}-u_{\infty})\rho_{\ell}^{p}\in W_{0}^{s,p}(\Omega_{\ell})$. Choosing $w=(u_{\ell}-u_{\infty})\rho_{\ell}^{p}$ in (\ref{RW}) gives 
		\begin{equation}\label{RW1}
			\frac{C_{N,s,p}}{2}\int_{\mathbb{R}^{N}\times \mathbb{R}^{N}}\frac{\left[\varphi_{p}(v_{\ell}(x,y))-\varphi_{p}(v_{\infty}(x,y))\right]\left([(u_{\ell}-u_{\infty})\rho_{\ell}^{p}](x)-[(u_{\ell}-u_{\infty})\rho_{\ell}^{p}](y)\right)}{|x-y|^{N+sp}}\,dxdy=0.
		\end{equation}
		A straightforward rearrangement shows that
		\begin{multline}\label{GCCC}
			I:=\int_{\mathbb{R}^{N}\times \mathbb{R}^{N}}\frac{\left[\varphi_{p}(v_{\ell}(x,y))-\varphi_{p}(v_{\infty}(x,y))\right]\left(v_{\ell}(x,y)-v_{\infty}(x,y)\right)\rho^{p}_{\ell}(x_{1})}{|x-y|^{N+sp}}\,dxdy\\
			=-\int_{\mathbb{R}^{N}\times \mathbb{R}^{N}}\frac{\varphi_{p}(v_{\ell}(x,y))\left(u_{\ell}(y)-u_{\infty}(Y_{2})\right)\left[\rho^{p}_{\ell}(x_{1})-\rho^{p}_{\ell}(y_{1})\right]}{|x-y|^{N+sp}}\,dxdy\hskip 4cm\\
			+ \int_{\mathbb{R}^{N}\times \mathbb{R}^{N}}\frac{\varphi_{p}(v_{\infty}(x,y))\left(u_{\ell}(y)-u_{\infty}(Y_{2})\right)\left[\rho^{p}_{\ell}(x_{1})-\rho^{p}_{\ell}(y_{1})\right]}{|x-y|^{N+sp}}\,dxdy=I^{1}_{\ell}-I^{2}_{\ell}.\hskip 2cm
		\end{multline}
		Next, applying the inequality (\ref{IMII}) to the right hand side of (\ref{GCCC}), we deduce
		\begin{align*}
			&|I_{\ell}^{1}-I_{\ell}^{2}|\\&\leq p\int_{\mathbb{R}^{N}\times \mathbb{R}^{N}} \frac{|\varphi_{p}(v_{\ell}(x,y))-\varphi_{p}(v_{\infty}(x,y))||u_{\ell}(y)-u_{\infty}(Y_{2})||\rho_{\ell}(x_{1})-\rho_{\ell}(y_{1})||\rho^{p-1}_{\ell}(x_{1})+\rho^{p-1}_{\ell}(y_{1})|}{|x-y|^{N+sp}}\,dxdy.
		\end{align*}
		Using inequality  $(5)$ from Lemma \ref{ILM}, it follows that
		\begin{align*}
			&|I_{\ell}^{1}-I_{\ell}^{2}|\\\precsim & \int_{\mathbb{R}^{N}\times \mathbb{R}^{N}} |v_{\ell}(x,y)-v_{\infty}(x,y)||v_{\ell}(x,y)|^{p-2}|u_{\ell}(y)-u_{\infty}(Y_{2})||\rho_{\ell}(x_{1})-\rho_{\ell}(y_{1})||\rho^{p-1}_{\ell}(x_{1})+\rho^{p-1}_{\ell}(y_{1})|\,d\mu(x,y)\\
			+&\int_{\mathbb{R}^{N}\times \mathbb{R}^{N}} |v_{\ell}(x,y)-v_{\infty}(x,y)||v_{\infty}(x,y)|^{p-2}|u_{\ell}(y)-u_{\infty}(Y_{2})||\rho_{\ell}(x_{1})-\rho_{\ell}(y_{1})||\rho^{p-1}_{\ell}(x_{1})+\rho^{p-1}_{\ell}(y_{1})|\,d\mu(x,y)\\
			=&I^{3}_{\ell}+I^{4}_{\ell},\hskip 13cm
		\end{align*}
		where  $d\mu(x,y)=\frac{1}{|x-y|^{N+sp}}\,dxdy$. We now estimate $I^{3}_{\ell}$ and $I^{4}_{\ell}$ separately. Since, $\rho\geq 0$ we have
		$$ \rho^{p-1}_{\ell}(x_{1})+\rho^{p-1}_{\ell}(y_{1})\leq  \left[\rho^{p-2}_{\ell}(x_{1})+\rho^{p-2}_{\ell}(y_{1})\right]\left[\rho_{\ell}(x_{1})+\rho_{\ell}(y_{1})\right].$$
		Applying this estimate in $I^{3}_{\ell}$ and using the generalized Hölder inequality with exponents $p$, $p$, and $\frac{p}{p-2}$, we obtain
		\begin{multline*}
			I^{3}_{\ell}\precsim\left(\int_{\mathbb{R}^{N}\times \mathbb{R}^{N}}|v_{\ell}(x,y)-v_{\infty}(x,y)|^{p}\rho^{p}_{\ell}(x_{1})\,d\mu(x,y)\right)^{1/p} \left(\int_{\mathbb{R}^{N}\times \mathbb{R}^{N}}|v_{\ell}(x,y)|^{p}\rho^{p}_{\ell}(x_{1})\,d\mu(x,y)\right)^{(p-2)/p}\\
			\times\left(\int_{\mathbb{R}^{N}\times\mathbb{R}^{N} }|u_{\ell}(y)-u_{\infty}(Y_{2})|^{p}||\rho_{\ell}(x_{1})-\rho_{\ell}(y_{1})|^{p}\,d\mu(x,y)\right)^{1/p}.
		\end{multline*}
		Similarly, for $I_{\ell}^{4}$ we obtain 
		\begin{multline*}
			I^{4}_{\ell}\precsim\left(\int_{\mathbb{R}^{N}\times \mathbb{R}^{N}}|v_{\ell}(x,y)-v_{\infty}(x,y)|^{p}\rho^{p}_{\ell}(x_{1})\,d\mu(x,y)\right)^{1/p} \left(\int_{\mathbb{R}^{N}\times \mathbb{R}^{N}}|v_{\infty}(x,y)|^{p}\rho^{p}_{\ell}(x_{1})\,d\mu(x,y)\right)^{(p-2)/p}\\
			\times\left(\int_{\mathbb{R}^{N}\times\mathbb{R}^{N} }|u_{\ell}(y)-u_{\infty}(Y_{2})|^{p}||\rho_{\ell}(x_{1})-\rho_{\ell}(y_{1})|^{p}\,d\mu(x,y)\right)^{1/p},
		\end{multline*}
		Combining these inequalities yields
		\begin{multline}\label{FCC}
			|I^{1}_{\ell}-I^{2}_{\ell}|
			\precsim \left(\int_{\mathbb{R}^{N}\times \mathbb{R}^{N} }|v_{\ell}(x,y)-v_{\infty}(x,y)|^{p}\rho^{p}_{\ell}(x_{1})\,d\mu(x,y)\right)^{1/p}
			\left[\Lambda^{1}_{\ell}+\Lambda^{2}_{\ell}\right]\Lambda^{3}_{\ell}.\\
		\end{multline}
		where
		$$ \Lambda^{1}_{\ell}=\left(\int_{\mathbb{R}^{N}\times\mathbb{R}^{N} }|v_{\ell}(x,y)|^{p}\rho^{p}_{\ell}(x_{1})\,d\mu(x,y)\right)^{(p-2)/p}, \quad \Lambda^{2}_{\ell}=\left(\int_{\mathbb{R}^{N}\times \mathbb{R}^{N}}|v_{\infty}(x,y)|^{p}\rho^{p}_{\ell}(x_{1})\,d\mu(x,y)\right)^{(p-2)/p},$$
		and
		$$ \Lambda_{\ell}^{3}=\left(\int_{\mathbb{R}^{N}\times \mathbb{R}^{N}}|u_{\ell}(y)-u_{\infty}(Y_{2})|^{p}||\rho_{\ell}(x_{1})-\rho_{\ell}(y_{1})|^{p}\,d\mu(x,y)\right)^{1/p}.$$
		Next, using the inequality $(1) $ in Lemma \ref{ILM} to the left hand side of (\ref{GCCC}) and using (\ref{FCC}), we conclude 
		\begin{align*}
			&\int_{\mathbb{R}^{N}\times \mathbb{R}^{N}}|v_{\ell}(x,y)-v_{\infty}(x,y)|^{p}\rho^{p}_{\ell}(x_{1})\,d\mu(x,y)\\&\precsim \left(\int_{\mathbb{R}^{N}\times \mathbb{R}^{N} }|v_{\ell}(x,y)-v_{\infty}(x,y)|^{p}\rho^{p}_{\ell}(x_{1})\,d\mu(x,y)\right)^{1/p}\left[\Lambda^{1}_{\ell}+\Lambda^{2}_{\ell}\right]\Lambda^{3}_{\ell},
		\end{align*}
		which implies
		\begin{equation}\label{GCC}
			\left(\int_{\mathbb{R}^{N}\times \mathbb{R}^{N}}|v_{\ell}(x,y)-v_{\infty}(x,y)|^{p}\rho^{p}_{\ell}(x_{1})\,d\mu(x,y)\right)^{(p-1)/p}\precsim \left[\Lambda^{2}_{\ell}+\Lambda^{2}_{\ell}\right]\Lambda^{3}_{\ell}.
		\end{equation}
		Applying the fractional $p$-Poincar\'e inequality to the function $(u_{\ell}-u_{\infty})\rho_{\ell}$ yields
		\begin{eqnarray*}\label{eqH}
			\nonumber\int_{\Omega_{\ell}}|u_{\ell}-u_{\infty}|^{p}\rho^{p}(x_{1})\,dx &\leq  &  C_{P}\int_{\mathbb{R}^{N}\times \mathbb{R}^{N} }\frac{|\left[(u_{\ell}-u_{\infty})\rho_{\ell}\right](x)-\left[(u_{\ell}-u_{\infty})\rho_{\ell}\right](y)|^{p}}{|x-y|^{N+sp}}\,dxdy\\
			&\leq& 2^{p-1}C_{P}\int_{\mathbb{R}^{N}\times \mathbb{R}^{N}} \frac{|v_{\ell}(x,y)-v_{\infty}(x,y)|^{p}\rho^{p}_{\ell}(x_{1})}{|x-y|^{N+sp}}\,dxdy\\\nonumber
			&&+2^{p-1}C_{P}\int_{\mathbb{R}^{N}\times \mathbb{R}^{N}} \frac{|\rho_{\ell}(x_{1})-\rho_{\ell}(y_{1})|^{p}|u_{\ell}(y)-u_{\infty}(Y_{2})|^{p}}{|x-y|^{N+sp}}\,dxdy.
		\end{eqnarray*}
		Using inequality $(3)$ in Lemma \ref{ILM}  gives 
		\begin{multline*}
			\|\left(u_{\ell}-u_{\infty}\right)\rho_{\ell}\|^{p-1}_{p,\Omega_{\ell}} \precsim
			\left(\int_{\mathbb{R}^{N}\times \mathbb{R}^{N}} |v_{\ell}(x,y)-v_{\infty}(x,y)|^{p}\rho^{p}_{\ell}(x_{1})\,d\mu(x,y)\right)^{(p-1)/p}\\
			+\left(\int_{\mathbb{R}^{N}\times \mathbb{R}^{N}} |\rho_{\ell}(x_{1})-\rho_{\ell}(y_{1})|^{p}|u_{\ell}(y)-u_{\infty}(Y_{2})|^{p}\,d\mu(x,y)\right)^{(p-1)/p}.
		\end{multline*}
		Combining this with (\ref{GCC}) yields 
		\begin{equation}\label{FDS}
			\|\left(u_{\ell}-u_{\infty}\right)\rho_{\ell}\|^{p-1}_{p,\Omega_{\ell}} \precsim \left(\Lambda^{1}_{\ell}+\Lambda^{2}_{\ell}\right)\Lambda^{3}_{\ell}+ \left[\Lambda^{3}_{\ell}\right]^{p-1}.
		\end{equation}
		On the other hand, from Lemmas \ref{LEm} and \ref{LKM} we have
		$$ \Lambda^{1}_{\ell}+\Lambda^{2}_{\ell}\precsim \left(\int_{\mathbb{R}^{N}\times \mathbb{R}^{N} }\frac{|u_{\infty}(Y_{2})|^{p}|\rho_{\ell}(x_{1})-\rho_{\ell}(y_{1})|^{p}}{|x-y|^{N+sp}}\,dxdy\right)^{\frac{p-2}{p}} +\ell^{\frac{p-2}{p}} \|f\|^{\frac{p-2}{p}}_{2,\omega}\|u_{\infty}\|^{\frac{p-2}{p}}_{p,\omega}+ \ell^{\frac{p-2}{p}} \|f\|^{\frac{p-2}{p-1}}_{2,\omega}.$$
		Substituting this into (\ref{FDS}) gives
		\begin{align}\label{PK}
			&\|\left(u_{\ell}-u_{\infty}\right)\rho_{\ell}\|^{p-1}_{p,\Omega_{\ell}} \\\precsim&\left(\left(\int_{\mathbb{R}^{N}\times \mathbb{R}^{N}}|u_{\infty}(Y_{2})|^{p}|\rho_{\ell}(x_{1})-\rho_{\ell}(y_{1})|^{p}\,d\mu(x,y)\right)^{\frac{p-2}{p}} +\ell^{\frac{p-2}{p}} \|f\|^{\frac{p-2}{p}}_{2,\omega}\|u_{\infty}\|^{\frac{p-2}{p}}_{p,\omega}+ \ell^{\frac{p-2}{p}} \|f\|^{\frac{p-2}{p-1}}_{2,\omega}\right)\Lambda^{3}_{\ell}\nonumber\\&+ \left[\Lambda^{3}_{\ell}\right]^{p-1}\nonumber.
		\end{align}
		Next, consider the function
		\begin{equation}\label{GF}
			h_{\ell}(y_{1}):=\int_{\mathbb{R}} \frac{|\rho_{\ell}(x_{1})-\rho_{\ell}(y_{1})|^{p}}{|x_{1}-y_{1}|^{1+sp}}\,dx_{1}.
		\end{equation}
		We shall frequently use estimates for  $h_{\ell}(y_{1})$. Observe first that if $|y_{1}|\geq 2\ell$ then $\rho_{\ell}(y_{1})=0$ and hence
		$$  h_{\ell}(y_{1})=\int_{\mathbb{R}} \frac{|\rho_{\ell}(x_{1})|^{p}}{|x_{1}-y_{1}|^{1+sp}}\,dx_{1}\stackrel{\text{from}\, \textbf{Fig.2.}}{\leq  }\int_{-\ell}^{\ell} \frac{1}{|x_{1}-y_{1}|^{1+sp}}\,dx_{1}\leq \frac{1}{sp}\left(\frac{1}{|-\ell +y_{1}|^{sp}}+\frac{1}{|\ell +y_{1}|^{sp}}\right). $$
		If  $|y_{1}|<2\ell$, using $|\rho'| \leq 2$ we have
		\begin{eqnarray*}
			h_{\ell}(y_{1})&=&\int_{-\ell}^{\ell} \frac{|\rho_{\ell}(x_{1}+y_{1})-\rho_{\ell}(y_{1})|^{p}}{|x_{1}|^{1+sp}}\,dx_{1}+\int_{\mathbb{R}\backslash (-\ell, \ell)} \frac{|\rho_{\ell}(x_{1}+y_{1})-\rho_{\ell}(y_{1})|^{p}}{|x_{1}|^{1+sp}}\,dx_{1}\\
			&\leq & \frac{2}{\ell^{p}}\int_{-\ell}^{\ell}\frac{|x_{1}|^{p}}{|x_{1}|^{1+sp}}\,dx_{1}+2\int_{\mathbb{R} \backslash (-\ell, \ell)}\frac{1}{|x_{1}|^{1+sp}}\,dx_{1},\\
			&\precsim & \frac{1}{\ell^{sp}},
		\end{eqnarray*}
		Therefore,
		\begin{equation}\label{NC}
			h_{\ell}(y_{1})\precsim \left\{
			\begin{array}{lcc}
				\frac{1}{\ell^{sp}} & \text{if}& y_{1}\in (-2\ell, 2\ell), \\
				\frac{1}{s}\left(\frac{1}{|-\ell +y_{1}|^{sp}}+\frac{1}{|\ell +y_{1}|^{sp}}\right)  & \text{if}& y_{1}\in \mathbb{R}\backslash (-2\ell, 2\ell).
			\end{array}\right.
		\end{equation}
		By convexity of the function $z\mapsto |z|^{p}$, we infer 
		\begin{equation}\label{VC2}
			\left[\varLambda^{3}_{\ell}\right]^{p}\precsim \int_{\mathbb{R}^{N}\times\mathbb{R}^{N} }\frac{|u_{\ell}(y)|^{p}|\rho_{\ell}(x_{1})-\rho_{\ell}(y_{1})|^{p}}{|x-y|^{N+sp}}\,dxdy+\int_{\mathbb{R}^{N}\times \mathbb{R}^{N}}\frac{|u_{\infty}(Y_{2})|^{p}|\rho_{\ell}(x_{1})-\rho_{\ell}(y_{1})|^{p}}{|x-y|^{N+sp}}\,dxdy=J^{1}_{\ell}+J^{2}_{\ell}.
		\end{equation}
		In what follows we estimate the terms $ J^{1}_{\ell}$ and $J^{2}_{\ell}$ respectively. From Lemma \ref{Lem} we have
		\begin{equation}\label{HB}
			\int_{\mathbb{R}^{N-1}}\frac{1}{\left(1+\frac{|X_{2}-Y_{2}|^{2}}{|x_{1}-y_{1}|^{2}}\right)^{\frac{N+sp}{2}}}\,dX_{2}=|x_{1}-y_{1}|^{N-1}\prod_{k=2}^{N}\theta_{k,p}.
		\end{equation}
		Hence,
		\begin{multline*}
			J^{1}_{\ell}=\int_{\mathbb{R}^{N}\times \mathbb{R}^{N}}\frac{|u_{\ell}(y)|^{p}|\rho_{\ell}(x_{1})-\rho_{\ell}(y_{1})|^{p}}{|x-y|^{N+sp}}\,dxdy\\
			=\int_{\mathbb{R}^{N}} |u_{\ell}(y)|^{p}\int_{\mathbb{R}} \frac{|\rho_{\ell}(x_{1})-\rho_{\ell}(y_{1})|^{p}}{|x_{1}-y_{1}|^{N+sp}}\int_{\mathbb{R}^{N-1}}\frac{1}{\left(1+\frac{|X_{2}-Y_{2}|^{2}}{|x_{1}-y_{1}|^{2}}\right)^{\frac{N+sp}{2}}}\,dX_{2}dx_{1}dy\hskip 1.9cm\\
			=\prod_{k=2}^{N}\theta_{k,p}\int_{\mathbb{R}^{N}}|u_{\ell}(y)|^{p}\int_{\mathbb{R}} \frac{|\rho_{\ell}(x_{1})-\rho_{\ell}(y_{1})|^{p}}{|x_{1}-y_{1}|^{1+sp}}\,dx_{1}dy\hskip 6cm\\
			=\prod_{k=2}^{N}\theta_{k,p}\int_{\mathbb{R}^{N}}|u_{\ell}(y)|^{p} h_{\ell}(y_{1})dy.\hskip 9.5cm
		\end{multline*}
		Since $u_{\ell}(y)=0$ in $\mathbb{R}^{N}\backslash \Omega_{\ell}$, it follows from (\ref{NC}) that
		\begin{equation*}\label{VC1}
			J_{\ell}^{1}=\prod_{k=2}^{N}\theta_{k,p}\int_{\mathbb{R}^{N}}|u_{\ell}(y)|^{p} h_{\ell}(y_{1})dy=\prod_{k=2}^{N}\theta_{k,p}\int_{\Omega_{\ell}}|u_{\ell}(y)|^{p} h_{\ell}(y_{1})dy\precsim \frac{1}{\ell^{sp}}\int_{\Omega_{\ell}}|u_{\ell}(y)|^{p}dy.
		\end{equation*}
		By Lemma \ref{LEm} this yields 
		\begin{equation}\label{KJ}
			J_{\ell}^{1}\precsim \frac{\|f\|^{\frac{p}{p-1}}_{2,\omega}}{\ell^{sp-1}}.
		\end{equation}
		Next, we note that since $s>\frac{p-1}{p}$ and $u_{\ell}(Y_{2})=0$ in $\mathbb{R}^{N-1}\backslash \omega$, it follows from (\ref{HB}) that
		\begin{multline}\label{EW1}
			J^{2}_{\ell}=\int_{\mathbb{R}^{N}\times \mathbb{R}^{N}}\frac{|u_{\infty}(Y_{2})|^{p}|\rho_{\ell}(x_{1})-\rho_{\ell}(y_{1})|^{p}}{|x-y|^{N+sp}}\,dxdy=\prod_{k=2}^{N}\theta_{k,p}\int_{\mathbb{R}^{N}}|u_{\infty}(Y_{2})|^{p}h_{\ell}(y_{1})dy\\=\prod_{k=2}^{N}\theta_{k,p}\int_{\mathbb{R}^{N-1}}|u_{\infty}(Y_{2})|^{p}\,dY_{2}\int_{-\infty}^{+\infty}h_{\ell}(y_{1})\,dy_{1}\hskip 7cm\\	
			= \prod_{k=2}^{N}\theta_{k,p}\int_{\omega}|u_{\infty}(Y_{2})|^{p}\,dY_{2}\left\{\int_{-2\ell}^{2\ell}h_{\ell}(y_{1})\,dy_{1}+\int_{2\ell}^{+\infty}h_{\ell}(y_{1})\,dy_{1}+\int_{-\infty}^{-2\ell}h_{\ell}(y_{1})\,dy_{1}\right\}\hskip 1cm\\\nonumber
			\precsim\frac{\|u_{\infty}\|^{p}_{p,\omega}}{\ell^{sp-1}}.\hskip 12.5cm
		\end{multline}
		Combining this with (\ref{KJ}) and (\ref{VC2}), we obtain
		$$\left[\Lambda^{3}_{\ell}\right]^{p}\precsim  \frac{\|f\|^{\frac{p}{p-1}}_{2,\omega}+\|u_{\infty}\|^{p}_{p,\omega} }{\ell^{sp-1}},$$
		which implies
		$$\Lambda_{3}\precsim  \frac{\|f\|^{\frac{1}{p-1}}_{2,\omega}+\|u_{\infty}\|_{p,\omega} }{\ell^{s-\frac{1}{p}}},$$
		Substituting this into (\ref{PK}) gives
		\begin{multline*}
			\|\left(u_{\ell}-u_{\infty}\right)\rho_{\ell}\|^{p-1}_{p,\Omega_{\ell}}\\\precsim \left(\frac{\|u_{\infty}\|^{p-2}_{p,\omega}}{\ell^{s(p-2)-\frac{p-2}{p}}} +\ell^{\frac{p-2}{p}} \|f\|^{\frac{p-2}{p}}_{2,\omega}\|u_{\infty}\|^{\frac{p-2}{p}}_{p,\omega}+ \ell^{\frac{p-2}{p}} \|f\|^{\frac{p-2}{p-1}}_{2,\omega}\right)\frac{\|f\|^{\frac{p}{p-1}}_{2,\omega}+\|u_{\infty}\|^{p}_{p,\omega} }{\ell^{s-\frac{1}{p}}}+ \frac{\|f\|^{\frac{p}{p-1}}_{2,\omega}+\|u_{\infty}\|^{p}_{p,\omega} }{\ell^{s(p-1)-\frac{p-1}{p}}}\\
			\precsim \frac{1}{\ell^{s(p-1)-\frac{p-1}{p}}}+\frac{1}{\ell^{s-\frac{p-1}{p}}}.\hskip 12cm
		\end{multline*}
		Thus,
		$$ \|\left(u_{\ell}-u_{\infty}\right)\rho_{\ell}\|_{p,\Omega_{\ell}}\precsim \left[\frac{1}{\ell^{s(p-1)-\frac{p-1}{p}}}+\frac{1}{\ell^{s-\frac{p-1}{p}}}\right]^{\frac{1}{p-1}}.$$
		Since $\rho_{\ell}=1$ on $\Omega_{\ell/2}$,  it follows that
		\begin{equation}
			\|u_{\ell}-u_{\infty}\|_{p, \Omega_{\ell/2}}\precsim \left[\frac{1}{\ell^{s(p-1)-\frac{p-1}{p}}}+\frac{1}{\ell^{s-\frac{p-1}{p}}}\right]^{\frac{1}{p-1}}.
		\end{equation}
		Choosing $\ell$ large enough so that $\ell/2 > \ell_{0}$, we obtain
		\begin{equation*}
			\|u_{\ell}-u_{\infty}\|_{p, \Omega_{\ell_{0}}}\precsim \left[\frac{1}{\ell^{s(p-1)-\frac{p-1}{p}}}+\frac{1}{\ell^{s-\frac{p-1}{p}}}\right]^{\frac{1}{p-1}}.	
		\end{equation*}
		This completes the proof.
	\end{proof}
	\section{Proof of Theorem \ref{THP}}
	This section is devoted to proving Theorem \ref{THP}. To that end, we first prepare several lemmas that will be utilized in the sequel.
	\begin{lemma}\label{LEm1}
		Let $u_{\ell}$ be the weak solution to (\ref{EQ2}). Then, 
		\begin{equation}
			\|u_{\ell}\|_{L^{\infty}(0,T; L^{2}(\Omega_{\ell}))}^{2}	+\|u_{\ell}\|^{p}_{L^{p}(0,T;L^{p}(\Omega_{\ell}))}\precsim  \ell \left\{\|f\|_{L^{2}(0,T;L^{2}(\omega))}^{\frac{p}{p-1}}+\|u_{0}\|^{2}_{2,\omega}\right\}.
		\end{equation}
	\end{lemma}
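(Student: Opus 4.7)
The plan is to derive the standard energy estimate by testing the weak formulation (\ref{IDNT2}) with $v=u_{\ell}(t)$ and then to track carefully how the hypotheses ``$f$ and $u_0$ depend only on $X_2$'' convert $\omega$-norms into $\Omega_{\ell}$-norms, producing the factor of $\ell$ on the right-hand side.

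First I would substitute $v=u_{\ell}(\cdot,t)\in W_0^{s,p}(\Omega_{\ell})$ into (\ref{IDNT2}). Since $u_{\ell}\in W(0,T;W_0^{s,p}(\Omega_{\ell}))$ with $\partial_t u_{\ell}\in L^2(0,T;L^2(\Omega_{\ell}))$, the term $\int_{\Omega_{\ell}}\partial_t u_{\ell}(t)\,u_{\ell}(t)\,dx$ equals $\tfrac12\frac{d}{dt}\|u_{\ell}(t)\|_{2,\Omega_{\ell}}^2$ (Proposition \ref{Pr1}). The symmetric double integral collapses to $\tfrac{C_{N,s,p}}{2}[u_{\ell}(t)]_{s,p,\mathbb{R}^N}^p$. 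Integrating in $t$ from $0$ to $t\in(0,T]$ yields
\begin{equation*}
\tfrac12\|u_{\ell}(t)\|_{2,\Omega_{\ell}}^2+\tfrac{C_{N,s,p}}{2}\int_0^t[u_{\ell}(\tau)]_{s,p,\mathbb{R}^N}^p\,d\tau=\tfrac12\|u_0\|_{2,\Omega_{\ell}}^2+\int_0^t\!\!\int_{\Omega_{\ell}}f(X_2,\tau)u_{\ell}(x,\tau)\,dx\,d\tau.
\end{equation*}

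Next I would apply the fractional $p$-Poincar\'e inequality of Lemma \ref{LLe} on the strip $D_{\infty}=\mathbb{R}\times\omega\supset\Omega_{\ell}$, whose constant $P_{N,s,p}^2(D_\infty)=P_{N-1,s,p}^2(\omega)$ is independent of $\ell$; since any $u\in W_0^{s,p}(\Omega_{\ell})$ extended by zero lies in $W_0^{s,p}(D_\infty)$, we get $\|u_{\ell}(\tau)\|_{p,\Omega_{\ell}}^p\precsim [u_{\ell}(\tau)]_{s,p,\mathbb{R}^N}^p$. To handle the RHS I would use H\"older in $x$ followed by Young's inequality with exponents $p$ and $p'$, choosing $\epsilon$ small enough so the resulting $\|u_{\ell}\|_{p,\Omega_{\ell}}^p$ term can be absorbed into the left-hand side (after using Poincar\'e in the reverse direction on half of the Gagliardo term):
\begin{equation*}
\Bigl|\int_{\Omega_{\ell}}f\,u_{\ell}\,dx\Bigr|\le\|f(\tau)\|_{p',\Omega_{\ell}}\|u_{\ell}(\tau)\|_{p,\Omega_{\ell}}\le\epsilon\|u_{\ell}(\tau)\|_{p,\Omega_{\ell}}^p+C(\epsilon)\|f(\tau)\|_{p',\Omega_{\ell}}^{p'}.
\end{equation*}

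The remaining step is to convert the $\Omega_{\ell}$-norms of $f$ and $u_0$ into $\omega$-norms, which is where the factor $\ell$ appears. Since $f$ depends only on $(X_2,\tau)$, $\|f(\tau)\|_{p',\Omega_{\ell}}^{p'}=2\ell\|f(\tau)\|_{p',\omega}^{p'}$, and since $p'<2$ and $\omega$ is bounded, H\"older gives $\|f(\tau)\|_{p',\omega}\precsim\|f(\tau)\|_{2,\omega}$. A further H\"older inequality in time yields $\int_0^T\|f(\tau)\|_{2,\omega}^{p'}d\tau\le T^{1-p'/2}\|f\|_{L^2(0,T;L^2(\omega))}^{p'}$, which exactly produces the exponent $p/(p-1)$. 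Similarly $\|u_0\|_{2,\Omega_{\ell}}^2=2\ell\|u_0\|_{2,\omega}^2$. After absorbing and invoking Poincar\'e once more on the LHS, taking $\sup_{t\in[0,T]}$ gives the desired bound on both $\|u_{\ell}\|_{L^\infty(0,T;L^2(\Omega_{\ell}))}^2$ and $\|u_{\ell}\|_{L^p(0,T;L^p(\Omega_{\ell}))}^p$.

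I do not expect any serious obstacle: the only subtlety is bookkeeping the $\ell$-dependence (linear in $\ell$, coming entirely from the $x_1$-independence of the data over $(-\ell,\ell)$) and ensuring the Poincar\'e constant on the strip is $\ell$-independent, which is precisely what Lemma \ref{LLe} guarantees.
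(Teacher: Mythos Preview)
Your proposal is correct and follows essentially the same approach as the paper: test with $v=u_{\ell}$, use Proposition~\ref{Pr1} for the time derivative, apply H\"older and then Young's inequality with exponents $p,p'$ to split the forcing term, absorb the $\|u_{\ell}\|_{p,\Omega_{\ell}}^p$ piece via the $\ell$-independent Poincar\'e constant of Lemma~\ref{LLe}, and finally convert the $\Omega_{\ell}$-norms of $f$ and $u_0$ into $\omega$-norms (picking up the factor $\ell$) using H\"older since $p'<2$. The only cosmetic difference is that the paper keeps the differential inequality and integrates at the end, whereas you integrate first and then estimate; the substance is identical.
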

	\begin{proof}
		Choosing $v=u_{\ell}$ in (\ref{IDNT2}) and applying Hölder’s inequality together with Proposition \ref{Pr1}, we have
		$$\frac{1}{2} \frac{d}{dt} \|u_{\ell}(t)\|^{2}_{2,\Omega_{\ell}}+\frac{C_{N,s,p}}{2} \int_{\mathbb{R}^{N}\times \mathbb{R}^{N}} \frac{|u_{\ell}(x,t)-u_{\ell}(y,t)|^{p}}{|x-y|^{N+ps}}\,dxdy\leq\|u_{\ell}\|_{p,\Omega_{\ell}}\left(\int_{-\ell}^{\ell}\int_{\omega}|f(X_{2},t)|^{p'}\,dx\right)^{\frac{1}{p'}},$$
		where $p'=\frac{p}{p-1}$.
		Using the fractional $p$-Poincar\'e inequality in the right hand side gives 
		\begin{multline}\label{GV}
			\frac{1}{2} \frac{d}{dt} \|u_{\ell}(t)\|^{2}_{2,\Omega_{\ell}}+	\frac{C_{N,s,p}}{2}\int_{\mathbb{R}^{N}\times \mathbb{R}^{N}}  \frac{|u_{\ell}(x,t)-u_{\ell}(y,t)|^{p}}{|x-y|^{N+ps}}\,dxdy\\\leq C_{P} \left(\int_{\mathbb{R}^{N}\times \mathbb{R}^{N}}  \frac{|u_{\ell}(x,t)-u_{\ell}(y,t)|^{p}}{|x-y|^{N+ps}}\,dxdy\right)^{\frac{1}{p}}\left(\int_{-\ell}^{\ell}\int_{\omega}|f(X_{2},t)|^{p'}\,dX_{2}dx_{1}\right)^{\frac{1}{p'}},
		\end{multline}
		where $C_{P}>0$  is the best constant in the fractional $p$-Poincar\'e inequality. Applying Young’s inequality to the right-hand side  of (\ref{GV})  yields
		\begin{multline*}\label{GVb}
			\frac{1}{2} \frac{d}{dt} \|u_{\ell}(t)\|^{2}_{2,\Omega_{\ell}}+	\frac{C_{N,s,p}}{2}\int_{\mathbb{R}^{N}\times \mathbb{R}^{N}}  \frac{|u_{\ell}(x,t)-u_{\ell}(y,t)|^{p}}{|x-y|^{N+ps}}\,dxdy\\\leq \epsilon \int_{\mathbb{R}^{N}\times \mathbb{R}^{N}}  \frac{|u_{\ell}(x,t)-u_{\ell}(y,t)|^{p}}{|x-y|^{N+ps}}\,dxdy+2C_{\epsilon}\ell\int_{\omega}|f(X_{2},t)|^{p'}\,dX_{2},\;\;\;\text{for all}\;\epsilon >0.
		\end{multline*}
		Choosing $\epsilon=\frac{C_{N,s,p}}{4}$ gives 
		\begin{equation}\label{HBV}
			\frac{1}{2} \frac{d}{dt} \|u_{\ell}(t)\|^{2}_{2,\Omega_{\ell}}+	\frac{C_{N,s,p}}{4}\int_{\mathbb{R}^{N}\times \mathbb{R}^{N}}  \frac{|u_{\ell}(x,t)-u_{\ell}(y,t)|^{p}}{|x-y|^{N+ps}}\,dxdy\leq C\ell\int_{\omega}|f(X_{2},t)|^{p'}\,dX_{2},
		\end{equation}
		Since $p'<2$,  Hölder’s inequality and integration over 
		$(0,t)$ yield
		\begin{equation}\label{HBV2}
			\frac{1}{2}  \|u_{\ell}(t)\|^{2}_{2,\Omega_{\ell}}+	\frac{C_{N,s,p}}{4}\int_{0}^{t}\int_{\mathbb{R}^{N}\times \mathbb{R}^{N}}  \frac{|u_{\ell}(x,\sigma)-u_{\ell}(y,\sigma)|^{p}}{|x-y|^{N+ps}}\,dxdyd\sigma\leq C\ell \left\{\|f\|^{p'}_{L^{2}(0,T;L^{2}(\omega))}+\|u_{0}\|^{2}_{2}\right\},
		\end{equation}
		for all $t\in [0,T].$ Further, using the fractional $p$-Poincar\'e inequality in the left hand side of (\ref{HBV2}) gives 
		\begin{equation}\label{HBV3}
			\|u_{\ell}\|^{2}_{L^{\infty}(0,T; L^{2}(\Omega_{\ell}))}+	\|u_{\ell}\|^{p}_{L^{p}(0,T;L^{2}(\Omega_{\ell}))}\precsim \ell \left\{\|f\|^{p'}_{L^{2}(0,T;L^{2}(\omega))}+\|u_{0}\|^{2}_{2}\right\}.
		\end{equation}
		Hence, this completes the proof.
	\end{proof}
	The proof of the next lemma can be carried out in a similar manner to that of Lemma \ref{LE}.
	\begin{lemma}\label{LEE}
		Let $x=(x_{1}, X_{2})\in \Omega_{\ell}$ and let $u_{\infty}$ be the unique weak solution to (\ref{eq3}) . Then, for any $v\in W_{0}^{s,p}(\Omega_{\ell})$, there holds
		\begin{equation}\label{DFFX}
			\int_{\Omega_{\ell}}\partial_{t}u_{\infty}(t)v\,dx+	\frac{C_{N,s}}{2}	\int_{\mathbb{R}^{N}\times \mathbb{R}^{N} }\frac{\varphi_{p}(u_{\infty}(X_{2},t)-u_{\infty}(Y_{2},t))(v(x)-v(y))}{|x-y|^{N+sp}}\,dxdy=\int_{\Omega_{\ell}}f(t)v\,dx, 
		\end{equation}
		for a.e. $t\in (0,T).$
	\end{lemma}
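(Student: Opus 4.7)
The plan is to reduce Lemma \ref{LEE} to the already-proved elliptic Lemma \ref{LE} by a \emph{freeze-the-time} argument rather than mimicking the calculation of Lemma \ref{LE} line-by-line. The key observation is that, at each fixed $t$, the weak formulation (\ref{IDN1}) rearranges to say that $u_{\infty}(\cdot,t)$ is the unique weak solution of the stationary cross-section problem (\ref{EQQ1}) with a \emph{modified} source $\tilde{f}(t):=f(t)-\partial_{t}u_{\infty}(t)$; applying Lemma \ref{LE} to this modified source produces the $N$-dimensional identity for test functions in $W_{0}^{s,p}(\Omega_{\ell})$, and transposing the time-derivative term back gives exactly the asserted identity.

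Concretely, I would first fix a set of full measure $E\subset(0,T)$ such that for $t\in E$ one has $u_{\infty}(\cdot,t)\in W_{0}^{s,p}(\omega)$, $\partial_{t}u_{\infty}(t)\in L^{2}(\omega)$, and identity (\ref{IDN1}) holds; the existence of such $E$ follows from the definition of the parabolic space $W(0,T;W_{0}^{s,p}(\omega))$ together with Definition \ref{Def1}. For such $t$, set $\tilde{f}(t):=f(t)-\partial_{t}u_{\infty}(t)$, which belongs to $L^{2}(\omega)$, and rewrite (\ref{IDN1}) as
\begin{equation*}
\frac{C_{N-1,s,p}}{2}\int_{\mathbb{R}^{N-1}\times\mathbb{R}^{N-1}}\frac{\varphi_{p}(u_{\infty}(X_{2},t)-u_{\infty}(Y_{2},t))(v(X_{2})-v(Y_{2}))}{|X_{2}-Y_{2}|^{N-1+sp}}\,dX_{2}dY_{2}=\int_{\omega}\tilde{f}(t)\,v\,dX_{2},
\end{equation*}
valid for all $v\in W_{0}^{s,p}(\omega)$. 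By the uniqueness part of the elliptic existence theory, $u_{\infty}(\cdot,t)$ is precisely the weak solution of (\ref{EQQ1}) with source $\tilde{f}(t)$.

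Then I would apply Lemma \ref{LE} with this frozen-time solution and the source $\tilde{f}(t)\in L^{2}(\omega)$, obtaining for every $v\in W_{0}^{s,p}(\Omega_{\ell})$
\begin{equation*}
\frac{C_{N,s,p}}{2}\int_{\mathbb{R}^{N}\times\mathbb{R}^{N}}\frac{\varphi_{p}(u_{\infty}(X_{2},t)-u_{\infty}(Y_{2},t))(v(x)-v(y))}{|x-y|^{N+sp}}\,dxdy=\int_{\Omega_{\ell}}\tilde{f}(t)\,v\,dx.
\end{equation*}
Substituting $\tilde{f}(t)=f(t)-\partial_{t}u_{\infty}(t)$ on the right and moving the time-derivative term to the left produces exactly the identity in Lemma \ref{LEE}. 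The only mildly delicate point, which is not a real obstacle, is bookkeeping: one must treat $u_{\infty}$ as a function on $\Omega_{\ell}\times(0,T)$ extended trivially in $x_{1}$, so that $\partial_{t}u_{\infty}(t)\in L^{2}(\Omega_{\ell})$ and the pairing $\int_{\Omega_{\ell}}\partial_{t}u_{\infty}(t)\,v\,dx$ is meaningful; this is the very extension already built into the proof of Lemma \ref{LE} (where $u_{\infty}(X_{2})$ is implicitly viewed as independent of $x_{1}$ in the $N$-dimensional integrals). No additional symmetrization or Poincaré-type input beyond Lemma \ref{LE} is required.
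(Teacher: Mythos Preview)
Your proof is correct and is essentially the paper's approach in disguise: the paper does not spell out the argument but simply states that Lemma~\ref{LEE} is proved ``in a similar manner to that of Lemma~\ref{LE}'', meaning the chain of equalities in Lemma~\ref{LE} is repeated with the time-dependent $u_{\infty}(\cdot,t)$ and the extra term $\int_{\Omega_{\ell}}\partial_{t}u_{\infty}(t)\,v\,dx$ carried along. Your freeze-the-time device---absorbing $\partial_{t}u_{\infty}(t)$ into a modified source $\tilde f(t)=f(t)-\partial_{t}u_{\infty}(t)$ and then invoking Lemma~\ref{LE} as a black box---amounts to the same computation, just invoked rather than redone, and is a marginally cleaner packaging.
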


	\begin{lemma}\label{LKM1}
		Let $u_{\infty}$ be the unique weak solution of (\ref{eq3}). Then the following estimate holds:
		\begin{multline*}
			\|u_{\infty}\sqrt{\rho^{p}_{\ell}}\|^{2}_{2,\Omega_{\ell}}+	\int_{0}^{t}\int_{\mathbb{R}^{N}\times \mathbb{R}^{N}} \frac{|u_{\infty}(X_{2},\sigma)-u_{\infty}(Y_{2},\sigma)|^{p}\rho^{p}_{\ell}(x_{1})}{|x-y|^{N+sp}}\,dxdyd\sigma\\\precsim \int_{0}^{t}\int_{\mathbb{R}^{N}\times \mathbb{R}^{N}}\frac{|u_{\infty}(Y_{2},\sigma)|^{p}|\rho_{\ell}(x_{1})-\rho_{\ell}(y_{1})|^{p}}{|x-y|^{N+sp}}\,dxdyd\sigma +\ell \|f\|_{L^{2}(0,T; L^{2}(\omega))}\|u_{\infty}\|_{L^{p}(0,T;L^{p}(\omega))}+\ell \|u_{0}\|^{2}_{2,\omega}.
		\end{multline*}
	\end{lemma}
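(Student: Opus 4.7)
The plan is to mirror the energy method used in the elliptic counterpart (Lemma \ref{LKM}), with the additional ingredient of handling the time derivative via Proposition \ref{Pr1}. First I would invoke Lemma \ref{LEV} (whose argument applies verbatim in the time-dependent setting, since $u_{\infty}(\cdot,t)\in W_{0}^{s,p}(\omega)$ for a.e.\ $t$) to conclude that $u_{\infty}(\cdot,t)\rho_{\ell}^{p}(x_{1})\in W_{0}^{s,p}(\Omega_{\ell})$ for a.e.\ $t\in(0,T)$. Plugging $v=u_{\infty}\rho_{\ell}^{p}$ into the identity of Lemma \ref{LEE} produces, for a.e.\ $t$,
\begin{equation*}
\int_{\Omega_{\ell}}\partial_{t}u_{\infty}(t)\,u_{\infty}(t)\rho_{\ell}^{p}\,dx+\frac{C_{N,s,p}}{2}\int_{\mathbb{R}^{N}\times\mathbb{R}^{N}}\frac{\varphi_{p}(v_{\infty})\bigl[(u_{\infty}\rho_{\ell}^{p})(x)-(u_{\infty}\rho_{\ell}^{p})(y)\bigr]}{|x-y|^{N+sp}}\,dxdy=\int_{\Omega_{\ell}}f(t)u_{\infty}\rho_{\ell}^{p}\,dx,
\end{equation*}
where $v_{\infty}=u_{\infty}(X_{2},t)-u_{\infty}(Y_{2},t)$. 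Because $\rho_{\ell}^{p}(x_{1})$ is time-independent, the first term equals $\tfrac{1}{2}\tfrac{d}{dt}\|u_{\infty}\sqrt{\rho_{\ell}^{p}}\|_{2,\Omega_{\ell}}^{2}$ by Proposition \ref{Pr1}.

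Next I would split the nonlocal term by adding and subtracting $u_{\infty}(Y_{2},t)\rho_{\ell}^{p}(x_{1})$, which gives the decomposition
\begin{equation*}
(u_{\infty}\rho_{\ell}^{p})(x)-(u_{\infty}\rho_{\ell}^{p})(y)=v_{\infty}\rho_{\ell}^{p}(x_{1})+u_{\infty}(Y_{2},t)\bigl[\rho_{\ell}^{p}(x_{1})-\rho_{\ell}^{p}(y_{1})\bigr],
\end{equation*}
so the diagonal piece contributes $|v_{\infty}|^{p}\rho_{\ell}^{p}(x_{1})$ (the good term) while the cross piece is estimated exactly as in Lemma \ref{LKM}: use inequality (4) from Lemma \ref{ILM} to write $|\rho_{\ell}^{p}(x_{1})-\rho_{\ell}^{p}(y_{1})|\leq p|\rho_{\ell}(x_{1})-\rho_{\ell}(y_{1})|(\rho_{\ell}^{p-1}(x_{1})+\rho_{\ell}^{p-1}(y_{1}))$, then apply Young's inequality with a small parameter $\epsilon$ to split the resulting integrand into a term of the form $\epsilon|v_{\infty}|^{p}(\rho_{\ell}^{p}(x_{1})+\rho_{\ell}^{p}(y_{1}))$ (absorbable by the good term after exploiting symmetry) and the desired term $C(\epsilon)|u_{\infty}(Y_{2},t)|^{p}|\rho_{\ell}(x_{1})-\rho_{\ell}(y_{1})|^{p}$.

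For the forcing term on the right I would apply Hölder's inequality in $X_{2}$, integrate trivially in $x_{1}\in(-\ell,\ell)$ using $\rho_{\ell}\leq 1$, and use Young's inequality to bound
\begin{equation*}
\Bigl|\int_{\Omega_{\ell}}f(t)\,u_{\infty}(t)\rho_{\ell}^{p}\,dx\Bigr|\precsim 2\ell\,\|f(t)\|_{2,\omega}\|u_{\infty}(t)\|_{p,\omega}.
\end{equation*}
After choosing $\epsilon$ small enough to absorb the stray copy of the good term into the left-hand side, I would integrate the resulting differential inequality over $(0,t)$. The initial contribution is controlled by $\|u_{0}\sqrt{\rho_{\ell}^{p}}\|_{2,\Omega_{\ell}}^{2}\leq 2\ell\|u_{0}\|_{2,\omega}^{2}$, and the time-integrated forcing term is bounded by $\ell\|f\|_{L^{2}(0,T;L^{2}(\omega))}\|u_{\infty}\|_{L^{p}(0,T;L^{p}(\omega))}$ via Cauchy--Schwarz/Hölder in $t$. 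Collecting everything yields the claimed estimate.

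The main obstacle is the absorption step: one must check that the exponent structure produced by Young's inequality (applied to $|v_{\infty}|^{p-1}(\rho_{\ell}^{p-1}(x_{1})+\rho_{\ell}^{p-1}(y_{1}))|u_{\infty}(Y_{2},t)||\rho_{\ell}(x_{1})-\rho_{\ell}(y_{1})|$ with conjugate exponents $p/(p-1)$ and $p$) really regenerates only $(\rho_{\ell}^{p}(x_{1})+\rho_{\ell}^{p}(y_{1}))$, so that symmetrization in $(x,y)$ reduces it to a bounded multiple of the good term $|v_{\infty}|^{p}\rho_{\ell}^{p}(x_{1})$; this is the only genuinely nonlocal bookkeeping issue, and it is identical to the one already carried out in the proof of Lemma \ref{LKM}, so no new idea is required beyond tracking the time integration.
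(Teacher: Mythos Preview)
Your proposal is correct and follows essentially the same approach as the paper's proof: the paper also invokes Lemma \ref{LEV}, tests \eqref{DFFX} with $v=u_{\infty}\rho_{\ell}^{p}$, reproduces the elliptic splitting/absorption argument of Lemma \ref{LKM} verbatim, and then integrates in time using Proposition \ref{Pr1} and H\"older's inequality. The only cosmetic difference is the order in which the time integration and the chain rule from Proposition \ref{Pr1} are applied.
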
	
	
	\begin{proof}
		The proof is similar to that of Lemma \ref{LKM1}, so we only sketch the main steps.	By Lemma \ref{LEV}, we have  $u_{\infty}\rho^{p}_{\ell} \in W_{0}^{s, p}(\Omega_{\ell})$. Taking $v= u_{\infty}\rho^{p}_{\ell} $ in (\ref{DFFX}), we obtain 
		\begin{multline}\label{DFFR2}
			\int_{\Omega_{\ell}} \partial_{t} u_{\infty}u_{\infty}\rho^{p}_{\ell}(x_{1})\,dx	\frac{C_{N,s}}{2}	\int_{\mathbb{R}^{N}\times \mathbb{R}^{N}}\frac{\varphi_{p}(u_{\infty}(X_{2},t)-u_{\infty}(Y_{2},t))((u_{\infty}\rho^{p}_{\ell})(x)-(u_{\infty}\rho^{p}_{\ell})(y))}{|x-y|^{N+sp}}\,dxdy\\
			=\int_{\Omega_{\ell}}fu_{\infty}\rho^{p}_{\ell}\,dx.
		\end{multline}
		Proceeding as in Lemma \ref{LKM1}, we arrive at
		\begin{multline*}
			\int_{\Omega_{\ell}} \partial_{t} u_{\infty}u_{\infty}\rho^{p}_{\ell}(x_{1})\,dx+	\frac{C_{N,s}}{2}	\int_{\mathbb{R}^{N}\times \mathbb{R}^{N}}\frac{|u_{\infty}(X_{2},t)-u_{\infty}(Y_{2},t)|^{p}\rho^{p}_{\ell}(x_{1})}{|x-y|^{N+sp}}\,dxdy\\
			\leq \ell \|f\|_{2,\omega}\|u_{\infty}\|_{p,\omega}+	2^{\frac{p}{p-1}}\epsilon	\int_{\mathbb{R}^{N}\times \mathbb{R}^{N}}\frac{|u_{\infty}(X_{2},t)-u_{\infty}(Y_{2},t)|^{p}\rho^{p}_{\ell}(x_{1})}{|x-y|^{N+sp}}\,dxdy\hskip 3cm\\
			+C(\epsilon)\int_{\mathbb{R}^{N}\times \mathbb{R}^{N}}\frac{|u_{\infty}(Y_{2},t)|^{p}|\rho_{\ell}(x_{1})-\rho_{\ell}(y_{1})|^{p}}{|x-y|^{N+sp}}\,dxdy, \;\;\;\text{for all}\;\epsilon >0.\hskip 3cm
		\end{multline*}
		Integrating this over \((0,t)\) and applying Proposition \ref{Pr1} together with Hölder’s inequality, we find
		\begin{align*}
			&\frac{1}{2}	\|u_{\infty}\sqrt{\rho^{p}_{\ell}}\|^{2}_{2,\Omega_{\ell}}+	\frac{C_{N,s}}{2}	\int_{0}^{t}\int_{\mathbb{R}^{N}\times \mathbb{R}^{N}}\frac{|u_{\infty}(X_{2},\sigma)-u_{\infty}(Y_{2},\sigma)|^{p}\rho^{p}_{\ell}(x_{1})}{|x-y|^{N+sp}}\,dxdyd\sigma\\\leq & \ell \|f\|_{L^{2}(0,T; L^{2}(\omega))}\|u_{\infty}\|_{L^{p}(0,T;L^{p}(\omega))}+\ell \|u_{0}\|^{2}_{2,\omega}\\&+	2^{\frac{p}{p-1}}\epsilon	\int_{0}^{t}\int_{\mathbb{R}^{N}\times \mathbb{R}^{N}}\frac{|u_{\infty}(X_{2},\sigma)-u_{\infty}(Y_{2},\sigma)|^{p}\rho^{p}_{\ell}(x_{1})}{|x-y|^{N+sp}}\,dxdyd\sigma\\
			&+C(\epsilon)\int_{0}^{t}\int_{\mathbb{R}^{N}\times \mathbb{R}^{N}}\frac{|u_{\infty}(Y_{2},\sigma)|^{p}|\rho_{\ell}(x_{1})-\rho_{\ell}(y_{1})|^{p}}{|x-y|^{N+sp}}\,dxdyd\sigma, \;\;\;\text{for all}\; \epsilon >0.
		\end{align*}
		Finally, choosing \(\epsilon>0\) sufficiently small and absorbing the corresponding term on the left-hand side yields the desired estimate.
	\end{proof}
	\begin{proof}(Theorem \ref{THP})
		Now we are in a position to finish the proof of Theorem \ref{THP}. To this end, we set
		
		$v_{\ell}(x,y,t)=u_{\ell}(x,t)-u_{\ell}(y,t)$ and $v_{\infty}(x,y,t)=u_{\ell}(X_{2},t)-u_{\ell}(Y_{2},t)$. Next, subtracting \eqref{IDNT2} from \eqref{DFFX} yields
		\begin{equation}\label{DFFX1}
			\int_{\Omega_{\ell}}\partial_{t}\left(u_{\ell}(t)-u_{\infty}(t)\right)v\,dx	+\frac{C_{N,s}}{2}	\int_{\mathbb{R}^{N}\times \mathbb{R}^{N} }\left[\varphi_{p}(v_{\ell}(x,,y,t))-\varphi_{p}(v_{\infty}(x,y,t))\right](v(x)-v(y))\,d\mu(x,y)=0,
		\end{equation}
		for any $v\in W_{0}^{s,p}(\Omega_{\ell})$ and	a.e. $t\in (0,T)$.  From Lemma \ref{LEV}, we know that $\left(u_{\ell}-u_{\infty}\right)\rho^{p}_{\ell} \in W_{0}^{s, p}(\Omega_{\ell})$. Thus, taking $v=\left(u_{\ell}-u_{\infty}\right)\rho^{p}_{\ell}$ in (\ref{DFFX1}) yields
		\begin{multline*}
			\int_{\Omega_{\ell}}\partial_{t}\left(u_{\ell}(t)-u_{\infty}(t)\right)\left[u_{\ell}-u_{\infty}\right]\rho^{p}_{\ell}(x_{1})\,dx	\\
			+\frac{C_{N,s}}{2}	\int_{\mathbb{R}^{N}\times \mathbb{R}^{N} }\left[\varphi_{p}(v_{\ell}(x,,y,t))-\varphi_{p}(v_{\infty}(x,y,t))\right](\left(\left[u_{\ell}-u_{\infty}\right]\rho^{p}_{\ell}\right)(x)-\left(\left[u_{\ell}-u_{\infty}\right]\rho^{p}_{\ell}\right)(y))\,d\mu(x,y)=0.
		\end{multline*}
		Using the same argument as in the previous section, we find
		\begin{multline}\label{GVZ}
			\int_{\Omega_{\ell}}\partial_{t}\left(u_{\ell}(t)-u_{\infty}(t)\right)\left[u_{\ell}-u_{\infty}\right]\rho^{p}_{\ell}(x_{1})\,dx	+\int_{\mathbb{R}^{N}\times \mathbb{R}^{N}}|v_{\ell}(x,y,t)-v_{\infty}(x,y,t)|^{p}\rho^{p}_{\ell}(x_{1})\,d\mu(x,y)\\
			\precsim \left(\int_{\mathbb{R}^{N}\times \mathbb{R}^{N} }|v_{\ell}(x,y,t)-v_{\infty}(x,y,t)|^{p}\rho^{p}_{\ell}(x_{1})\,d\mu(x,y)\right)^{1/p}
			\left[\Lambda^{1}_{\ell}(t)+\Lambda^{\ell}_{2}(t)\right]\Lambda^{3}_{\ell}(t),
		\end{multline}	
		where
		$$ \Lambda^{1}_{\ell}(t)=\left(\int_{\mathbb{R}^{N}\times\mathbb{R}^{N} }|v_{\ell}(x,y,t)|^{p}\rho^{p}_{\ell}(x_{1})\,d\mu(x,y)\right)^{(p-2)/p},$$ $$ \Lambda^{2}_{\ell}(t)=\left(\int_{\mathbb{R}^{N}\times \mathbb{R}^{N}}|v_{\infty}(x,y,t)|^{p}\rho^{p}_{\ell}(x_{1})\,d\mu(x,y)\right)^{(p-2)/p},$$
		and
		$$ \Lambda_{\ell}^{3}(t)=\left(\int_{\mathbb{R}^{N}\times \mathbb{R}^{N}}|u_{\ell}(y,t)-u_{\infty}(Y_{2},t)|^{p}||\rho_{\ell}(x_{1})-\rho_{\ell}(y_{1})|^{p}\,d\mu(x,y)\right)^{1/p}.$$
		Next, applying Young’s inequality to the right-hand side of \eqref{GVZ} gives
		\begin{multline*}
			\int_{\Omega_{\ell}}\partial_{t}\left(u_{\ell}(t)-u_{\infty}(t)\right)\left[u_{\ell}-u_{\infty}\right]\rho^{p}_{\ell}(x_{1})\,dx	+\int_{\mathbb{R}^{N}\times \mathbb{R}^{N}}|v_{\ell}(x,y,t)-v_{\infty}(x,y,t)|^{p}\rho^{p}_{\ell}(x_{1})\,d\mu(x,y)\\
			\precsim \epsilon\int_{\mathbb{R}^{N}\times \mathbb{R}^{N} }|v_{\ell}(x,y,t)-v_{\infty}(x,y,t)|^{p}\rho^{p}_{\ell}(x_{1})\,d\mu(x,y)+C(\epsilon)
			\left[\Lambda^{1}_{\ell}(t)+\Lambda^{\ell}_{2}(t)\right]^{\frac{p}{p-1}}\left[\Lambda^{3}_{\ell}(t)\right]^{\frac{p}{p-1}}, \;\;\;\forall \epsilon>0.
		\end{multline*}
		Taking $\epsilon$ small enough, we deduce
		\begin{multline*}\label{GVZfs}
			\int_{\Omega_{\ell}}\partial_{t}\left(u_{\ell}(t)-u_{\infty}(t)\right)\left[u_{\ell}-u_{\infty}\right]\rho^{p}_{\ell}(x_{1})\,dx	+\int_{\mathbb{R}^{N}\times \mathbb{R}^{N}}|v_{\ell}(x,y,t)-v_{\infty}(x,y,t)|^{p}\rho^{p}_{\ell}(x_{1})\,d\mu(x,y)\\
			\precsim
			\left[\Lambda^{1}_{\ell}(t)+\Lambda^{\ell}_{2}(t)\right]^{\frac{p}{p-1}}\left[\Lambda^{3}_{\ell}(t)\right]^{\frac{p}{p-1}}.
		\end{multline*}
		Moreover, in view of the inequality  $(2)$  in Lemma \ref{ILM}, we obtain
		\begin{multline*}
			\int_{\Omega_{\ell}}\partial_{t}\left(u_{\ell}(t)-u_{\infty}(t)\right)\left[u_{\ell}-u_{\infty}\right]\rho^{p}_{\ell}(x_{1})\,dx	+\int_{\mathbb{R}^{N}\times \mathbb{R}^{N}}|v_{\ell}(x,y,t)-v_{\infty}(x,y,t)|^{p}\rho^{p}_{\ell}(x_{1})\,d\mu(x,y)\\
			\precsim
			\left(\left[\Lambda^{1}_{\ell}(t)\right]^{\frac{p}{p-1}}+\left[\Lambda^{\ell}_{2}(t)\right]^{\frac{p}{p-1}}\right)\left[\Lambda^{3}_{\ell}(t)\right]^{\frac{p}{p-1}}.
		\end{multline*}
		Further, by Proposition \ref{Pr1} we infer that

		$$ \int_{0}^{t}	\int_{\Omega_{\ell}}\partial_{\sigma}\left(u_{\ell}(\sigma)-u_{\infty}(\sigma)\right)\left[u_{\ell}(\sigma)-u_{\infty}(\sigma)\right]\rho^{p}_{\ell}(x_{1})\,dxd\sigma =\frac{1}{2}\|\left[u_{\ell}-u_{\infty}\right](t)\sqrt{\rho_{\ell}^{p}}\|^{2}_{2,\Omega_{\ell}}.$$
		Here we used the fact that $u_{\ell}(x,0)=u_{\infty}(X_{2}, 0)=u_{0}(X_{2})$.  Using this and integrating the previous inequality over $(0,t) $, we obtain 
		\begin{multline*}
			\frac{1}{2}\|\left[u_{\ell}-u_{\infty}\right](t)\sqrt{\rho_{\ell}^{p}}\|^{2}_{2,\Omega_{\ell}}	+\int_{0}^{t}\int_{\mathbb{R}^{N}\times \mathbb{R}^{N}}|v_{\ell}(x,y,\sigma)-v_{\infty}(x,y,\sigma)|^{p}\rho^{p}_{\ell}(x_{1})\,d\mu(x,y)d\sigma\\
			\precsim
			\int_{0}^{t}\left(\left[\Lambda^{1}_{\ell}(\sigma)\right]^{\frac{p}{p-1}}+\left[\Lambda^{\ell}_{2}(\sigma)\right]^{\frac{p}{p-1}}\right)\left[\Lambda^{3}_{\ell}(\sigma)\right]^{\frac{p}{p-1}}\,d\sigma.
		\end{multline*}
		Recall the definitions
		$$ \left[\Lambda^{1}_{\ell}(t)\right]^{\frac{p}{p-1}}=\left(\int_{\mathbb{R}^{N}\times\mathbb{R}^{N} }|v_{\ell}(x,y,t)|^{p}\rho^{p}_{\ell}(x_{1})\,d\mu(x,y)\right)^{\frac{p-2}{p-1}}=\left[\Sigma_{\ell}^{1}(t)\right]^{\frac{p-2}{p-1}},$$
		$$ \left[\Lambda^{2}_{\ell}(t)\right]^{\frac{p}{p-1}}=\left(\int_{\mathbb{R}^{N}\times \mathbb{R}^{N}}|v_{\infty}(x,y,t)|^{p}\rho^{p}_{\ell}(x_{1})\,d\mu(x,y)\right)^{\frac{p-2}{p-1}}=\left[\Sigma_{\ell}^{2}(t)\right]^{\frac{p-2}{p-1}},$$
		and
		$$ \left[\Lambda_{\ell}^{3}(t)\right]^{\frac{p}{p-1}}=\left(\int_{\mathbb{R}^{N}\times \mathbb{R}^{N}}|u_{\ell}(y,t)-u_{\infty}(Y_{2},t)|^{p}||\rho_{\ell}(x_{1})-\rho_{\ell}(y_{1})|^{p}\,d\mu(x,y)\right)^{\frac{1}{p-1}}=\left[\Sigma_{\ell}^{3}(t)\right]^{\frac{1}{p-1}}.$$
		Substituting these into the last inequality yields
		\begin{multline*}
			\frac{1}{2}\|\left[u_{\ell}-u_{\infty}\right](t)\sqrt{\rho_{\ell}^{p}}\|^{2}_{2,\Omega_{\ell}}	+\int_{0}^{t}\int_{\mathbb{R}^{N}\times \mathbb{R}^{N}}|v_{\ell}(x,y,\sigma)-v_{\infty}(x,y,\sigma)|^{p}\rho^{p}_{\ell}(x_{1})\,d\mu(x,y)d\sigma\\
			\precsim
			\int_{0}^{t}\left(\left[\Sigma^{1}_{\ell}(\sigma)\right]^{\frac{p-2}{p-1}}+\left[\Sigma_{\ell}^{2}(\sigma)\right]^{\frac{p-2}{p-1}}\right)\left[\Sigma^{3}_{\ell}(\sigma)\right]^{\frac{1}{p-1}}\,d\sigma.
		\end{multline*}
		Applying Hölder’s inequality to the right-hand side with indices $p-1$ and $\frac{p-1}{p-2}$, we get
		\begin{eqnarray}\label{GZA}
			&&	\frac{1}{2}\|\left[u_{\ell}-u_{\infty}\right](t)\sqrt{\rho_{\ell}^{p}}\|^{2}_{2,\Omega_{\ell}}	+\int_{0}^{t}\int_{\mathbb{R}^{N}\times \mathbb{R}^{N}}|v_{\ell}(x,y,\sigma)-v_{\infty}(x,y,\sigma)|^{p}\rho^{p}_{\ell}(x_{1})\,d\mu(x,y)d\sigma\\
			&&	\precsim
			\left[	\left(\int_{0}^{t}\Sigma^{1}_{\ell}(\sigma)\,d\sigma\right)^{\frac{p-2}{p-1}}+\left(\int_{0}^{t}\Sigma^{2}_{\ell}(\sigma)\,d\sigma\right)^{\frac{p-2}{p-1}}\right]\left(\int_{0}^{t}\Sigma^{3}_{\ell}(\sigma)\,d\sigma\right)^{\frac{1}{p-1}}.\nonumber
		\end{eqnarray}
		Since $0\leq \rho_{\ell}\leq 1$, by (\ref{HBV2}) and Lemma \ref{LKM1}, we have
		\begin{equation}\label{QQ1}
			\int_{0}^{t}\Sigma^{1}_{\ell}(\sigma)\,d\sigma=\int_{0}^{t}\int_{\mathbb{R}^{N}\times\mathbb{R}^{N} }|v_{\ell}(x,y,\sigma)|^{p}\rho^{p}_{\ell}(x_{1})\,d\mu(x,y)\,d\sigma\precsim\ell \left\{\|f\|_{L^{2}(0,T;L^{2}(\omega))}^{p'}+\|u_{0}\|^{2}\right\}
		\end{equation}
		and
		\begin{multline*} \int_{0}^{t}\Sigma^{2}_{\ell}(\sigma)\,d\sigma=\int_{0}^{t}\int_{\mathbb{R}^{N}\times\mathbb{R}^{N} }|v_{\infty}(x,y,\sigma)|^{p}\rho^{p}_{\ell}(x_{1})\,d\mu(x,y)\,d\sigma\\\precsim \int_{0}^{t}\int_{\mathbb{R}^{N}\times \mathbb{R}^{N}}\frac{|u_{\infty}(Y_{2},\sigma)|^{p}|\rho_{\ell}(x_{1})-\rho_{\ell}(y_{1})|^{p}}{|x-y|^{N+sp}}\,dxdyd\sigma +\ell \|f\|_{L^{2}(0,T; L^{2}(\omega))}\|u_{\infty}\|_{L^{p}(0,T;L^{p}(\omega))}+\ell \|u_{0}\|^{2}_{2,\omega}.	
		\end{multline*}
		From the estimate of $J^{2}_{\ell}$ in the previous section, we derive
		\begin{equation*}
			\int_{0}^{t}\int_{\mathbb{R}^{N}\times \mathbb{R}^{N}}\frac{|u_{\infty}(Y_{2},\sigma)|^{p}|\rho_{\ell}(x_{1})-\rho_{\ell}(y_{1})|^{p}}{|x-y|^{N+sp}}\,dxdyd\sigma\precsim \frac{\|u_{\infty}\|^{p}_{L^{p}(0,T;L^{p}(\omega))}}{\ell^{sp-1}}.
		\end{equation*}
		Combining this with the above gives
		\begin{equation}\label{QQ2} \int_{0}^{t}\Sigma^{2}_{\ell}(\sigma)\,d\sigma\precsim \frac{\|u_{\infty}\|^{p}_{L^{p}(0,T;L^{p}(\omega))}}{\ell^{sp-1}} +\ell \|f\|_{L^{2}(0,T; L^{2}(\omega))}\|u_{\infty}\|_{L^{p}(0,T;L^{p}(\omega))}+\ell \|u_{0}\|^{2}_{2,\omega}.	
		\end{equation}
		Moreover, note that
		\begin{multline}\label{GVZA}
			\int_{0}^{t}\Sigma^{3}_{\ell}(\sigma)\,d\sigma=\int_{0}^{t}\int_{\mathbb{R}^{N}\times \mathbb{R}^{N}}|u_{\ell}(y,\sigma)-u_{\infty}(Y_{2},\sigma)|^{p}||\rho_{\ell}(x_{1})-\rho_{\ell}(y_{1})|^{p}\,d\mu(x,y)d\sigma\\
			\precsim \int_{0}^{t}J^{1}_{\ell}(\sigma)\,d\sigma +\int_{0}^{t}J^{2}_{\ell}(\sigma)\,d\sigma,\hskip 7cm
		\end{multline}
		where
		$$ J^{1}_{\ell}(\sigma)=\int_{\mathbb{R}^{N}\times \mathbb{R}^{N}}\frac{|u_{\ell}(Y_{2},\sigma)|^{p}|\rho_{\ell}(x_{1})-\rho_{\ell}(y_{1})|^{p}}{|x-y|^{N+sp}}\,dxdy, $$
		and $$ J^{2}_{\ell}(\sigma)=\int_{\mathbb{R}^{N}\times \mathbb{R}^{N}}\frac{|u_{\infty}(Y_{2},\sigma)|^{p}|\rho_{\ell}(x_{1})-\rho_{\ell}(y_{1})|^{p}}{|x-y|^{N+sp}}\,dxdy.$$
		By Lemma \ref{LEm1} and the estimate of  $J^{1}_{\ell} $ from the previous section, we infer
		$$ \int_{0}^{t}J^{1}_{\ell}(\sigma)\,d\sigma\precsim \frac{\|u_{\ell}\|^{p}_{L^{p}(0,T;L^{p}(\Omega_{\ell}))}}{\ell^{sp}}\precsim\frac{1}{\ell^{ps-1}}\left\{\|f\|_{L^{2}(0,T;L^{2}(\omega))}^{p'}+\|u_{0}\|^{2}\right\}.$$
		Similarly, from the estimate of 
		$J^{2}_{\ell}$ we get
		$$ \int_{0}^{t}J^{2}_{\ell}(\sigma)\,d\sigma\precsim \frac{\|u_{\infty}\|^{p}_{L^{p}(0,T;L^{p}(\omega))}}{\ell^{sp-1}}.$$
		Plugging these into \eqref{GVZA} yields
		\begin{equation}\label{GFX}
			\int_{0}^{t}\Sigma^{3}_{\ell}(\sigma)\,d\sigma\precsim \frac{\|u_{\infty}\|^{p}_{L^{p}(0,T;L^{p}(\omega))}+\|f\|_{L^{2}(0,T;L^{2}(\omega))}^{p'}+\|u_{0}\|^{2}}{\ell^{sp-1}}.	
		\end{equation}
		Substituting \eqref{QQ1}, \eqref{QQ2}, and the above into \eqref{GZA} gives
		$$\begin{array}{ll}
			\displaystyle \frac{1}{2}\|\left[u_{\ell}-u_{\infty}\right](t)\sqrt{\rho_{\ell}^{p}}\|^{2}_{2,\Omega_{\ell}}	&\displaystyle +\int_{0}^{t}\int_{\mathbb{R}^{N}\times \mathbb{R}^{N}}|v_{\ell}(x,y,\sigma)-v_{\infty}(x,y,\sigma)|^{p}\rho^{p}_{\ell}(x_{1})\,d\mu(x,y)d\sigma\\
			&\displaystyle \precsim \frac{1}{\ell^{sp-1}}+\frac{1}{\ell^{\frac{sp}{p-1}-1}}.\end{array}$$
		Further, applying the fractional $p$-Poincar\'e inequality to $(u_{\ell}-u_{\infty})\rho_{\ell}$ on the left hand side of the last inequality and arguing as before, we find 
		\begin{multline}\label{GVL}
			\|\left[u_{\ell}-u_{\infty}\right](t)\sqrt{\rho_{\ell}^{p}}\|^{2}_{2,\Omega_{\ell}}	+\|\left[u_{\ell}-u_{\infty}\right]\rho_{\ell} \|^{p}_{L^{p}(0,T; L^{p}(\Omega_{\ell}))}\precsim \frac{1}{\ell^{sp-1}}+\frac{1}{\ell^{\frac{sp}{p-1}-1}}+\int_{0}^{t}\Sigma^{3}_{\ell}(\sigma)\,d\sigma\\
			\stackrel{\text{by \;(\ref{GFX})}}{\precsim}\frac{1}{\ell^{sp-1}}+\frac{1}{\ell^{\frac{sp}{p-1}-1}}.\hskip 2.8cm
		\end{multline}
		Since $\rho_{\ell}=1$ on $\Omega_{\ell/2}$, this implies 
		\begin{equation*}
			\|\left[u_{\ell}-u_{\infty}\right](t)\|^{2}_{2,\Omega_{\ell/2}}	+\|u_{\ell}-u_{\infty}\|^{p}_{L^{p}(0,T; L^{p}(\Omega_{\ell/2}))}\precsim\frac{1}{\ell^{sp-1}}+\frac{1}{\ell^{\frac{sp}{p-1}-1}}.
		\end{equation*}
		Choosing $\ell$  large enough so that  $\ell/2 > \ell_{0}$, we conclude
		\begin{equation*}
			\|u_{\ell}-u_{\infty}\|^{2}_{L^{\infty}(0,T; L^{2}(\Omega_{\ell_{0}}))}	+\|u_{\ell}-u_{\infty}\|^{p}_{L^{p}(0,T; L^{p}(\Omega_{\ell_{0}}))}\precsim\frac{1}{\ell^{sp-1}}+\frac{1}{\ell^{\frac{sp}{p-1}-1}}.
		\end{equation*}
		which completes the proof.
	\end{proof}

	\vspace{2cm}
	
\textbf{Author contributions}  All authors contributed equally to this manuscript.\\
\vspace{0.5cm}\\
\textbf{Availability of data and material} No datasets were generated or analysed during the current study.
\vspace{0.5cm}\\
\textbf{Declarations}\\
\textbf{Competing interests } The authors declare no competing interests.

\end{document}